\newtheorem{theorem}{Theorem}[section]
\newtheorem{lemma}[theorem]{Lemma}
\newtheorem{claim}[theorem]{Claim}
\newtheorem{proposition}[theorem]{Proposition}
\newtheorem{corollary}[theorem]{Corollary}
\theoremstyle{definition}
\newtheorem{definition}[theorem]{Definition}
\theoremstyle{remark}
\newtheorem{remark}[theorem]{Remark}
\numberwithin{equation}{section}
\newcommand{\cH}{\mathcal{H}}
\newcommand{\cE}{\mathcal{E}}
\newcommand{\cN}{\mathcal{N}}
\newcommand{\cP}{\mathcal{P}}
\newcommand{\bP}{\mathbb{P}}
\newcommand{\bE}{\mathbb{E}}
\newcommand{\Q}{\mathbb{Q}}
\newcommand{\Z}{\mathbb{Z}}
\newcommand{\R}{\mathbb{R}}
\newcommand{\la}{\lambda }
\newcommand{\de}{\delta }
\newcommand{\si}{\sigma }
\newcommand{\ga}{\gamma }
\newcommand{\Ga}{\Gamma }
\newcommand{\ones}{\mathbbm{1}}
\newcommand{\one}{\mathbf{1}}
\newcommand{\Mod}{\operatorname{Mod}}
\newcommand{\Dom}{\operatorname{Dom}}
\newcommand{\Adm}{\operatorname{Adm}}
\newcommand{\Ext}{\operatorname{Ext}}
\newcommand{\MEO}{\operatorname{MEO}}
\newcommand{\co}{\operatorname{co}}
\newcommand{\bi}{\begin{itemize}}
\newcommand{\ei}{\end{itemize}}
\newcommand{\Gahat}{\widehat{\Ga}}
\newcommand{\Gatil}{\widetilde{\Ga}}
\newcommand{\BL}{\operatorname{BL}}
\newcommand{\lbr}{\left\{ }
\newcommand{\rbr}{\right\} }
\newcommand{\argmax}{\operatorname{argmax}}
\newcommand{\coni}{\operatorname{coni}}
\date{}
\begin{document}
\title{\bf Fulkerson duality for modulus of spanning trees and  partitions\thanks{This material is based upon work supported by the National Science Foundation under Grant n. 2154032.}}
\author[1]{Huy Truong}
\author[1]{Pietro Poggi-Corradini}
\affil[1]{\small Dept. of Mathematics, Kansas State University, Manhattan, KS 66506, USA.}

\maketitle 


\begin{abstract}
One of the main properties of modulus on graphs is Fulkerson duality. In this paper, we study Fulkerson duality for spanning tree modulus.  We introduce a new notion of Beurling partition, and we identify two important ones, which correspond to the notion of strength and maximum denseness of an arbitrary graph. These special partitions, also give rise to two deflation processes that reveal a hierarchical structure for general graphs. While Fulkerson duality for spanning tree families can be deduced from a well-known result in combinatorics due to Chopra, we give an alternative approach based on a result of Nash-Williams and Tutte. Finally, we introduce the weighted variant of spanning tree modulus.
\end{abstract}
\noindent {\bf Keywords:} Spanning trees, feasible partitions, Fulkerson duality, modulus, strength.

\vspace{0.1in}

\noindent {\bf 2020 Mathematics Subject Classification:} 90C27 (Primary) ; 05C85  (Secondary).
\section{Introduction}

In this paper, we consider graphs $G=(V,E)$ that are finite, undirected, and connected multigraphs with vertex set $V$ and edge set $E$. We allow $G$ to have multiedges, but not self-loops. Often, we will simply call $G$ a ``connected graph" for short. In some sections, we consider weighted graphs $G=(V,E,\si)$, meaning that each edge $e$ has a  weight $0<\si(e)<\infty$ assigned to it. In other sections, we consider unweighted graphs, meaning that every edge $e$ has weight $\si(e)\equiv 1$.

Modulus on graphs has been developed and investigated extensively in recent years
\cite{modulus, pietrominimal, pietroblocking, pietrofairest, polynomial}. 
Modulus of many different families has been studied and used in applications. For example, the modulus of families of paths on graphs recovers path-related quantities such as minimum cut, effective resistance and shortest path \cite{modulus}. Modulus of the family of spanning trees was introduced in \cite{pietrofairest}. Several applications and algorithms have been developed for spanning tree modulus  \cite{pietrosecure, polynomial}. 
Fulkerson duality for modulus exhibits a dual family of a given family. For instance, when looking at the family of all paths connecting two nodes $s$ and $t$, the dual family in this case is known to be the family of minimal $st$-cuts. In the case of the family of spanning trees of a graph, the Fulkerson dual is known to be indexed by a set of feasible partitions, as can be deduced from a result of Chopra \cite{chopraon} (see Section \ref{sec-feasible-spanning}).

It is widely recognized that the relationship between the strength of $G=(V,E,\si)$ (see Definition \ref{def:strength-pb}) and the tree packing problem (see (\ref{dualr})) can be derived from a result of Nash-Williams and Tutte \cite{edge-disjoint,on} (see Theorem \ref{TutteN}). While the existence of this derivation is mentioned in the literature, a formal proof is yet to be identified. Thus, to our knowledge, the validity of this derivation is not self-evident. In this paper, we establish this relationship. In \cite{frank2003decomposing}, the authors give a generalization of the theorem of Nash-Williams and Tutte to hypergraphs. However, the following example shows that an equivalent formulation for this relationship does not hold for hypergraphs. Consider the following hypergraph $H =( V =\lbr v_1,v_2,v_3 \rbr, E = \lbr e_1,e_2\rbr)$ and $e_1 = \lbr v_1,v_2,v_3 \rbr$, $e_2 = \lbr v_1,v_2,v_3 \rbr$. When $\si = (1,2)$, we have that its strength 
 equals to $3/2$ and 
the hypertree packing value is $1$.
 Based on this observation, we conclude that the use of the Nash-Williams and Tutte's theorem is specific to graphs, and cannot be readily generalized to the case of hypergraphs. Consequently, we recognize the significance of our proof specifically in the context of graphs.

As a stepping stone, we utilize this result to deduce Fulkerson duality for spanning trees, an approach distinct from Chopra's method in \cite{chopraon}. 
Moreover, in \cite{ba2023}, the authors show that the equivalent Fulkerson duality formulation does not hold for hypergraphs.

In this paper, we also thoroughly analyze spanning tree modulus. Below is a summary of our contributions:

\begin{itemize}
	\item In Section \ref{sec:pairs}, we introduce the notion of Fulkerson dual pairs.
	\item In Section \ref{sec:beu}, we introduce the concept of Beurling partitions. 
	
	\item In Section \ref{sec4} and Section \ref{sec5}, we give two important Beurling partitions, they represent the strength and the maximum denseness of arbitrary graphs.
	
	\item In Section \ref{sec6}, we provide several useful equivalent characterizations of homogeneous graphs.
	
	\item In section \ref{sec:deflation}, we give two deflation processes for general graphs.

	\item In Section \ref{sec:firsth}, we establish a connection between the strength of graphs and the 1-modulus of the spanning tree family.

	\item In Section \ref{sec:ful}, we reprove the Fulkerson duality for the spanning tree family $\Ga$.
	
	\item In Section \ref{sec:weighted2.2}, we generalize our results to spanning tree modulus with weights.

\end{itemize}


\section{Preliminaries}

\subsection{Modulus on graphs and Fulkerson duality for modulus}
Let $G= (V,E,\si)$ be a weighted graph with edge weights $\si \in \R^E_{>0}$. Let $\Ga$ be a finite family of objects that are subsets of the 
edge set $E$. For each object $\ga \in \Ga$, we associate a function 
\[\cN(\ga,\cdot)^T: E\rightarrow \R_{\geq 0}, \]
where $\cN(\ga,\cdot)^T$ is an {\it usage vector} in $\R^E_{\geq 0}$ which measures the {\it usage of each edge} $e$ by $\ga$. In other words, $\Ga$ is associated with a $|\Ga| \times |E|$ {\it usage matrix} $\cN$.  To avoid trivial situations, we will assume from now on that the family $\Ga$ is non-empty and that each object $\ga\in\Ga$ uses at least some edges a positive and finite amount.

Let $\rho$ be a {\it density}  in $ \R^E_{\geq 0}$ which consists in values on the edges of the graph, the value of $\rho(e)$ represents the {\it cost} of using the edge $e \in E$. Given an object $\ga \in \Ga$, we define 
\[\ell_{\rho}(\ga) := \sum\limits_{e \in E} \cN(\ga,e)\rho(e)= (\cN\rho)(\ga),\] which represents the {\it total usage cost} of $\ga$ with respect to the density $\rho$.
A density $\rho \in \R^E_{\geq 0 }$ is  {\it admissible} for $\Ga$, if
\[ \ell_{\rho}(\ga) \geq 1, \quad \forall \ga\in \Ga,\]
or equivalently, if 
\[\ell_{\rho}(\Ga) := \inf\limits_{\ga\in \Ga} \ell_{\rho}(\Ga) \geq 1 .\]
In matrix notations, $\rho$ is admissible if \[\cN\rho \geq \one,\]
where $\one$ is the column vector of all ones and the inequality  holds elementwise.  The {\it admissible set} $\Adm(\Ga)$ of $\Ga$ is the set of all admissible densities for $\Ga$,
\begin{equation}\label{eq:adm-set}
\Adm(\Ga) := \left\{ \rho \in \R^E_{\geq 0 }: \cN\rho \geq \one \right\}. 
\end{equation} 
Fix $1 \leq p < \infty$, the {\it energy} of the density  $\rho$ is defined as follows
\[ \cE_{p,\si}(\rho):=\sum\limits_{e \in E}\si(e)\rho(e)^p.\] 
When $p =\infty $,
\[\cE_{\infty,\si}(\rho):= \max\limits_{e\in E} \left\{ \si (e)\rho(e)\right\}.\]
The {\it $p$-modulus} of a family of objects $\Ga$ is

\[\Mod_{p,\si}(\Ga):= \inf\limits_{\rho \in \Adm(\Ga)} \cE_{p,\si}(\rho).\]
Equivalently, $p$-modulus of $\Ga$ corresponds to the following optimization problem,

\begin{equation} \label{modp}
\begin{array}{ll}
\underset{\rho \in \R^{E}}{\text{minimize}}    &\cE_{p,\si}(\rho) \\
\text{subject to } &\sum\limits_{e \in E} \cN(\ga,e)\rho(e) \geq 1, \quad \forall \ga \in \Ga; \\
						 &\rho \geq 0.  	 
\end{array}
\end{equation}
When $\si$ is the vector of all ones, we write $\Mod_{p}(\Ga) := \Mod_{p,\si}(\Ga)$.

In this section, we start by recalling some important notions. Let $K$ be a closed convex set in $\R^E_{\geq 0}$ such that $ \varnothing \subsetneq K \subsetneq \R^E_{\geq 0}$ and $K$ is {\it recessive}, in the sense that $K +\R^E_{\geq 0} = K$. We define the {\it blocker} $\BL(K)$ of $K$ as follows, \[\BL(K) = \lbr \eta \in \R^E_{\geq 0} : \eta^{T}\rho \geq 1, \forall \rho \in K \rbr. \] 
Let $\Ga$ be a family of objects. We will often identify the family $\Ga$ with the set of its usage vectors $\left\{ \cN(\ga,\cdot )^T: \ga \in \Ga \right\}$ in $\R^E_{\geq 0}$, so we can write $\Ga \subset \R^E_{\geq 0 }$. Then, the {\it dominant} of $\Ga$ is
\[ \Dom(\Ga):= \co(\Ga) + \R^E_{\geq 0},\]
where $\co(\Ga)$ denotes the convex hull of $\Ga$ in $\R^E$.

Also, note that the admissible set  $\Adm(\Ga)$ defined in (\ref{eq:adm-set}) is closed, convex in $ \R^E_{\geq 0}$ and recessive. 
Next, we recall the {\it Fulkerson blocker family} of $\Ga$. 
\begin{definition} Let $\Ga$ be a family of objects.
	The {\it Fulkerson blocker family} $\widehat{\Ga}$ of $\Ga$ is defined as the set of all the extreme points of $\Adm(\Ga)$.
	\[ \widehat{\Ga} := \Ext\left(\Adm(\Ga)\right) \subset  \R^E_{\geq 0}.\]
\end{definition}
Fulkerson blocker duality \cite{fulkersonblocking} states that
\[\Dom(\widehat{\Ga})= \Adm(\Ga) = \BL(\Adm(\Gahat)),\] 
\[\Dom(\Ga)= \Adm(\widehat{\Ga}) = \BL\left(\Adm(\Ga)\right).\]
Since $\widehat{\Ga}$ is a set of vectors in $\R^E_{\geq 0}$, $\widehat{\Ga}$  has its own Fulkerson blocker family, and
\begin{equation}\label{eq:gahathat-ga}
	\widehat{\widehat{\Ga}}  \subset \Ga .
\end{equation}

When  $1<p < \infty$, let $q:=p/(p-1)$ be the
Hölder conjugate exponent of $p$. For any set of weights $\si \in \R^E_{>0}$,  define the dual set of weights $\widetilde{\si}$ as $\widetilde{\si}(e):=\si(e)^{-\frac{q}{p}}$ for all $e\in E$. Let $\Gahat$ be a Fulkerson dual family of $\Ga$. Fulkerson duality for modulus  \cite[Theorem 3.7]{pietroblocking} states that
\begin{equation}
	\Mod_{p,\si}(\Ga)^{\frac{1}{p}}\Mod_{q,\widetilde{\si}}(\widehat{\Ga})^{\frac{1}{q}}=1.
\end{equation}
Moreover, the optimal $\rho^*$ of $\Mod_{p,\si}(\Ga) $ and the optimal $\eta^*$ of $\Mod_{q,\widetilde{\si}}(\widehat{\Ga})$ always exist, are unique, and are related as follows,
\[\eta^{\ast}(e) = \frac{\si(e)\rho^{\ast}(e)^{p-1}}{\Mod_{p,\si}(\Ga)} \quad \forall e\in E.\]
In the special case when $p=2$, we have
\begin{equation*}
	\Mod_{2,\si}(\Ga)\Mod_{2,\si^{-1}}(\widehat{\Ga})=1 \qquad\text{and}\qquad   \displaystyle \eta^{\ast}(e) = \frac{\si(e)}{\Mod_{2,\si}(\Ga)}\rho^{\ast}(e) \quad \forall  e\in E.
\end{equation*}

\subsection{The minimum expected overlap problem}
Let $ \Ga = \Ga_G$ be the set of all spanning trees of a connected multigraph $G = (V,E).$ Let $\cP(\Ga)$ be the set of all probability mass functions (pmf) on $\Ga$. Given a pmf $\mu \in \cP(\Ga)$, let $\underline{\ga}$ and $\underline{\ga'}$ be two independent random spanning trees, identically distributed with law $\mu$. The cardinality of the overlap between $\underline{\ga}$ and $\underline{\ga'}$, is $|\underline{\ga} \cap \underline{\ga'}|$ and is a random variable whose expectation is denoted by  $\bE_\mu|\underline{\ga}\cap\underline{\ga'}|$. Then, the {\it minimum expected overlap} ($\MEO$) problem is the following problem,

\begin{equation} \label{meo}
{\renewcommand{\arraystretch}{1.8}
\begin{array}{ll}
\text{minimize}    &\bE_\mu|\underline{\ga}\cap\underline{\ga'}| \\
\text{subject to } & \mu \in \cP(\Ga). 	 
\end{array}
}
\end{equation}
The optimal laws $\mu^*$ for the $\MEO$ problem are not in general unique. However, the corresponding edge usage probabilities $\eta^*(e) = \bP_{\mu^*}(e \in \underline{\ga})$ do not depend on the optimal pmf $\mu^*$. Moreover, they satisfy
$\sum\limits_{e \in E} \eta^*(e) = |V|-1$.

\begin{theorem}[\cite{pietrominimal}] \label{meomod}
Let $G = (V,E)$ be a connected multigraph, let $\Ga = \Ga_G$ be the spanning tree family with usage vectors given by the indicator functions, and let $\widehat{\Ga}$ be the Fulkerson blocker family of $\Ga$. Then $\rho \in  \R^E_{\geq 0}$, $\eta \in  \R^E_{\geq 0}$ and $\mu \in \cP(\Ga)$ are optimal respectively for $\Mod_2(\Ga)$, $\Mod_2(\widehat{\Ga})$ and $\MEO(\Ga)$ if and only if the following conditions are satisfied.
\begin{itemize}
\item[(i)] $\rho \in \Adm(\Ga), \hspace{2pt} \eta = \cN^T\mu,$
\item[(ii)] $\eta(e) = \frac{\rho(e)}{\Mod_2(\Ga)} \quad \forall e \in E,$
\item[(iii)] $\mu(\ga)(1-\ell_\rho(\ga)) = 0 \quad \forall \ga \in \Ga.$
\end{itemize}
In particular, 
\begin{align*}
\MEO(\Ga) = \Mod_2(\Ga)^{-1} = \Mod_2(\widehat{\Ga}).
\end{align*}
\end{theorem}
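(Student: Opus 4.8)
The plan is to prove Theorem \ref{meomod} by combining the general Fulkerson duality for modulus (the $p=2$ special case stated in the previous subsection) with a probabilistic reformulation of the energy of $\Mod_2(\widehat{\Ga})$ as the expected overlap. First I would observe that since the usage vectors of $\Ga$ are indicator functions $\ones_\ga$, for a pmf $\mu\in\cP(\Ga)$ the vector $\eta=\cN^T\mu$ has coordinates $\eta(e)=\bP_\mu(e\in\underline{\ga})$, and a direct expansion gives
\begin{equation*}
\bE_\mu|\underline{\ga}\cap\underline{\ga'}| = \sum_{e\in E}\bP_\mu(e\in\underline{\ga})\bP_\mu(e\in\underline{\ga'}) = \sum_{e\in E}\eta(e)^2 = \cE_{2,\one}(\eta),
\end{equation*}
using independence of $\underline{\ga}$ and $\underline{\ga'}$. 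So the $\MEO$ problem is exactly the minimization of $\cE_2$ over the set $\{\cN^T\mu : \mu\in\cP(\Ga)\} = \co(\Ga)$. The next step is to argue that minimizing $\cE_2$ over $\co(\Ga)$ is the same as minimizing it over $\Dom(\Ga)=\co(\Ga)+\R^E_{\geq 0}=\Adm(\widehat{\Ga})$: adding a nonnegative vector only increases each squared coordinate, so the minimizer lies in $\co(\Ga)$ anyway, and hence $\MEO(\Ga)=\Mod_2(\widehat{\Ga})$.

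Next I would invoke Fulkerson duality for modulus with $p=q=2$ and $\si=\one$, which directly yields $\Mod_2(\Ga)\Mod_2(\widehat{\Ga})=1$, so that $\MEO(\Ga)=\Mod_2(\widehat{\Ga})=\Mod_2(\Ga)^{-1}$; this gives the displayed ``In particular'' identity. It remains to establish the equivalence of the three optimality conditions (i)--(iii). For the forward direction, suppose $\rho$, $\eta$, $\mu$ are optimal for $\Mod_2(\Ga)$, $\Mod_2(\widehat{\Ga})$, $\MEO(\Ga)$ respectively. Admissibility of $\rho$ is by definition, and $\eta=\cN^T\mu$ follows because the minimizer of $\cE_2$ over $\Dom(\Ga)$ is attained in $\co(\Ga)$ and is unique (strict convexity of $\cE_2$), so it must coincide with $\cN^T\mu$ for any optimal $\mu$; this is (i). Condition (ii) is exactly the relation between the optimal $\rho^*$ and $\eta^*$ from Fulkerson duality for modulus specialized to $p=2$, $\si=\one$. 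For (iii), I would write out the KKT/complementary slackness conditions for the modulus linear-ish program \eqref{modp}: the optimal $\rho$ satisfies $2\rho(e) = \sum_{\ga}\lambda_\ga \cN(\ga,e)$ for Lagrange multipliers $\lambda_\ga\geq 0$ with $\lambda_\ga(\ell_\rho(\ga)-1)=0$, and one identifies (after normalization) $\lambda_\ga$ with a multiple of $\mu(\ga)$; complementary slackness then reads $\mu(\ga)(1-\ell_\rho(\ga))=0$. For the converse direction, I would check that any triple satisfying (i)--(iii) produces feasible points whose objective values satisfy the duality equality, forcing optimality: from (i) and (ii), $\ell_\rho(\ga)\geq 1$ for all $\ga$; using (iii), $\bE_\mu\ell_\rho(\underline\ga)=1$, and then
\begin{equation*}
\cE_2(\eta) = \sum_e \eta(e)^2 = \Mod_2(\Ga)\sum_e \eta(e)\rho(e) = \Mod_2(\Ga)\,\bE_\mu \ell_\rho(\underline\ga) = \Mod_2(\Ga),
\end{equation*}
wait --- more carefully, $\sum_e\eta(e)\rho(e)=\sum_e (\rho(e)/\Mod_2(\Ga))\rho(e)=\cE_2(\rho)/\Mod_2(\Ga)$, and also $\sum_e\eta(e)\rho(e)=\sum_\ga\mu(\ga)\ell_\rho(\ga)=1$, so $\cE_2(\rho)=\Mod_2(\Ga)$, giving optimality of $\rho$; then (ii) and the duality identity give optimality of $\eta$ and of $\mu$.

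The main obstacle I anticipate is the careful treatment of the optimality conditions: establishing the precise correspondence between the Lagrange multipliers of the modulus problem \eqref{modp} and the optimal pmf $\mu$, so that complementary slackness becomes (iii). This requires knowing that an optimal $\mu$ exists (it does, since $\cP(\Ga)$ is a compact simplex and the objective is continuous), that strong duality holds for \eqref{modp} (it does, being a convex program with a strictly feasible point --- take $\rho$ large), and that the dual optimal variables, suitably scaled, form a probability distribution on $\Ga$ whose marginal is the optimal $\eta$. An alternative, perhaps cleaner route to the same end is to cite Theorem 3.7 of \cite{pietroblocking} together with its characterization of the optimizers (which already supplies (ii) and the ``support on tight objects'' statement) and then only verify that the probabilistic object $\MEO$ coincides with $\Mod_2(\widehat{\Ga})$ via the overlap-expansion computation above; this reduces the novel content to the first paragraph and makes (i)--(iii) a bookkeeping translation of the known modulus optimality conditions.
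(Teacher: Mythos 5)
The paper does not actually prove Theorem \ref{meomod}; it is quoted from \cite{pietrominimal}, so there is no in-paper argument to compare against, and your proposal should be judged as a free-standing reconstruction. As such it is essentially correct and follows the natural route: the expansion $\bE_\mu|\underline{\ga}\cap\underline{\ga'}|=\sum_{e\in E}(\cN^T\mu)(e)^2$ by independence, the identification $\{\cN^T\mu:\mu\in\cP(\Ga)\}=\co(\Ga)$, the monotonicity observation that the minimum of $\cE_2$ over $\Dom(\Ga)=\Adm(\Gahat)$ is attained already in $\co(\Ga)$, and the $p=q=2$, $\si\equiv\one$ case of Fulkerson duality for modulus together give $\MEO(\Ga)=\Mod_2(\Gahat)=\Mod_2(\Ga)^{-1}$ and condition (ii), while strict convexity gives uniqueness of the minimizing $\eta$ and hence (i). The one place where your write-up is loose is the forward direction of (iii): the stationarity condition you quote, $2\rho(e)=\sum_\ga\lambda_\ga\cN(\ga,e)$, omits the multipliers for the constraints $\rho\ge 0$, and matching the dual variables with an MEO-optimal $\mu$ (rather than just with \emph{some} optimal dual vector) requires an extra argument. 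This detour is avoidable: the pairing computation you already use in the converse also proves (iii) directly, since for the optimal $\rho^*$, $\eta^*$ and any MEO-optimal $\mu$ one has $\eta^*=\cN^T\mu$, hence $\sum_{\ga\in\Ga}\mu(\ga)\ell_{\rho^*}(\ga)=\sum_{e\in E}\eta^*(e)\rho^*(e)=\cE_2(\rho^*)/\Mod_2(\Ga)=1$, while admissibility gives $\ell_{\rho^*}(\ga)\ge 1$, forcing $\mu(\ga)(1-\ell_{\rho^*}(\ga))=0$ for every $\ga$. With that substitution (or with your alternative of importing the optimizer characterization from \cite[Theorem 3.7]{pietroblocking} and only proving the overlap identity), the argument is complete and matches in spirit the original proof in \cite{pietrominimal}.
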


 A spanning tree $\ga \in \Ga$ is called a {\it fair tree} if there exists an optimal pmf $\mu^*$ such that $\mu^*(\ga)>0.$ The set of all fair trees of $G$ is denoted by $\Ga^f$.
Let $\rho^*$ and $\eta^*$ be the unique optimal densities for $\Mod_2(\Ga)$ and $\Mod_2(\widetilde{\Ga})$, respectively.  
\begin{definition}\label{def:homogeneous}
The graph $G$ is said to be {\it homogeneous} (with respect to spanning tree modulus) if $\eta^*$ is constant, or equivalently, $\rho^*$ is constant. 
\end{definition}
By \cite[Corollary 4.4]{pietrofairest}, $G$ is homogeneous if and only if $\eta_{ hom} \equiv \frac{|V|-1}{|E|} \in \Adm(	\widehat{\Ga})$.

For a large graph $G$, the family $\Ga_G$ of all spanning trees is typically very large. Each tree gives rise to a constraint in the spanning tree modulus problem, hence it would be computationally infeasible for a standard quadratic program solver to solve the modulus problem with all those constraints. However, an algorithm for spanning tree modulus was described in \cite{pietrofairest} which allows one to effectively solve the problem within a given tolerance. An algorithm in exact arithmetic for spanning tree modulus was given in \cite{polynomial}.

\subsection{Feasible partitions and spanning trees}\label{sec-feasible-spanning}
Let $G=(V,E)$ be a connected graph. Let $\Ga =\Ga_G$ be the spanning tree family. A partition $P= \left\{ V_1,V_2,...,V_{k_P} \right\}$ of the node set $V$ with $k_P \geq 2$  is said to be {\it feasible} if each $V_i$ induces a connected subgraph $G(V_i)$ of $G$.   The set of all edges in $G$ that connect vertices that belong to different $V_i$ is called the {\it cut set} of $P$ and denoted by $E_P$. The {\it weight} of $P$ is defined as follows,
 \begin{equation}\label{eq:weight-partition}
 w(P) = \frac{|E_P|}{k_P-1}.
 \end{equation}

Given a feasible partition $P= \left\{ V_1,V_2,...,V_{k_P} \right\}$, we construct the {\it shrunk graph} $G_P$ of the partition $P$ by identifying all the nodes in $V_i$ as a single node $v_i$ and removing all resulting self-loops. The graph $G_P$  has $k_P$ nodes and  the edge set $E_P$. Because $G$ is connected, the shrunk graph $G_P$ is connected. Furthermore, every spanning tree $\ga$ of $G$ must connect all the subsets $V_i$. Therefore, the restriction of $\ga$ onto $E_P$ forms a connected spanning subgraph of $G_P$. The notion of feasible partitions was introduced in  \cite{chopraon}. \begin{definition}\label{def:strength-pb}
	Let $G= (V,E,\si)$ be a weighted connected graph with edge weights $\si \in \R^E_{>0}$. Let  $\Phi$ be the set of all feasible partitions. If $P = \left\{ V_1,V_2,...,V_{k_P} \right\}$ is a feasible partition of the node set $V$, let $E_P$ be the cut set, and denote $\si(E_P) := \sum\limits_{e \in E_P} \si(e)$. Then,  we define the {\it strength} of the graph $G$ as follows,
	\begin{equation} \label{strength}
		S_{\si}(G) := \min\limits_{P \in \Phi} \frac{\si(E_P)}{k_P-1}.
	\end{equation}
	Any feasible partition $P$ satisfying $S_{\si}(G)= \frac{\si(E_P)}{k_P-1} $ is called a  {\it critical partition}. In the case when $G$ is unweighted, we write $S(G):=S_{\si \equiv \one} (G).$
\end{definition}
The notion of strength for graphs was suggested by Gusfield \cite{gusfieldtree} as a measure of 
network invulnerability. The second notion we want to recall is the maximum denseness of graphs. Maximum denseness is also known as the fractional arboricity of the graph $G$ which measures the maximum edge density of subgraphs of $G$. The fractional arboricity has been studied for many years and has many applications. 
Next, we define the {\it denseness} $\theta_{\si}(G)$ of the graph $G =(V,E,\si)$:
\begin{equation}\label{denseness}
	\theta_{\si}(G) : = \frac{\si(E)}{|V|-1}.
\end{equation}
When $G$ is unweighted, we write $\theta(G):=\theta_{\si \equiv \one} (G).$
The function $\theta$ gives a sense of how dense or sparse G is. There are several other functions that tell how dense a graph is, for example, the graph density  $\frac{2|E|}{|V|(|V|-1)}$ or the edge-based graph density $\frac{|E|}{|V|}$. Since the edge-based density is half the average degree of the graph $G$, the denseness is also closely related to  the average degree. Now, the maximum denseness of weighted graphs is defined as follows.
\begin{definition} Let $G=(V,E\si)$ be a connected graph. Let $\cH$ be the set of all vertex-induced connected subgraphs of $G$ that contain at least one edge. We define the {\it maximum denseness} $D(G)$ of the graph $G$ as follows,
	\begin{equation}\label{eq:denseness}
		D_{\si}(G) :=  \max\limits_{H' \in \cH} \theta_{\si}(H').
	\end{equation}
	We call the problem of finding $D_{\si}(G)$ the {\it maximum denseness  problem}. Any $H\in \cH$ satisfies $\theta_{\si}(H) = \max\limits_{H' \in \cH} \theta_{\si}(H')$ is called a {\it maximum denseness subgraph} of $G$. When $G$ is unweighted, we write $D(G):=D_{\si \equiv \one} (G).$
\end{definition}

Recall that $\Ga$ is the spanning tree family. The polyhedron  $\Dom(\Ga)$ has been studied thoroughly. In \cite{chopraon}, Chopra shows that 
\begin{align}\label{eq:chopraa}
\Dom(\Ga) = \left\{ \eta \in \R^E_{\geq 0} : \sum\limits_{e\in E_P} \eta(e) \geq k_P-1 \text{ for all feasible partitions } P \right\}.
\end{align}
 We restate the result  (\ref{eq:chopraa}) in the language of modulus. Let $\Phi$ be the family of all feasible partitions $P$ of $G$ with the usage vectors:

\begin{equation}\label{usage2}
\widetilde{\cN}(P,\cdot)^T=\frac{1}{k_P-1}\ones_{E_P}.
\end{equation}
Then, the admissible set of $\Phi$ is:
\begin{equation*}
\Adm(\Phi) = \left\{ \eta \in \R^E_{\geq 0} :  \widetilde{\cN} \eta \geq \one \right\}.
\end{equation*}
where $\one$ is the column vector of all ones.
 Notice that the set of usage vectors of $\Phi$ is 

 \[ \left\{ \frac{1}{k_P-1}\mathbbm{1}_{E_P}  : P \text{ is a feasible partition}\right\}, \]
 and each usage vector in the set is indexed by the corresponding feasible partition. We will often identify $\Phi$ with its family of usage vectors in $\R^E_{\geq 0}$.
\begin{theorem}[\cite{chopraon}]\label{theo:chopra} Let $G=(V,E)$ be a connected graph. Let $\Ga =\Ga_G$ be the spanning tree family of $G$.
Let $\Phi$ be the family of all feasible partitions $P$ of $G$ with usage vectors given as in (\ref{usage2}).  Then, $\Phi$ is a Fulkerson dual family of $\Ga$.
\end{theorem}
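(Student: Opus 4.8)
The plan is to derive the theorem directly from Chopra's polyhedral identity (\ref{eq:chopraa}) together with the Fulkerson blocking duality recalled in the preliminaries; nothing new about spanning trees is needed, since (\ref{eq:chopraa}) already carries the combinatorial content.

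First I would rewrite (\ref{eq:chopraa}) in the language of admissibility for the partition family $\Phi$. For every feasible partition $P$ we have $k_P\ge 2$, so $k_P-1>0$ and the constraint $\sum_{e\in E_P}\eta(e)\ge k_P-1$ is equivalent to $\tfrac{1}{k_P-1}\sum_{e\in E_P}\eta(e)\ge 1$, i.e. to $\widetilde{\cN}(P,\cdot)^T\eta\ge 1$ with the usage vector $\widetilde{\cN}(P,\cdot)^T=\tfrac{1}{k_P-1}\ones_{E_P}$ of (\ref{usage2}). Ranging over all feasible partitions turns (\ref{eq:chopraa}) into
\[
\Dom(\Ga)=\left\{\eta\in\R^E_{\ge 0}:\widetilde{\cN}\eta\ge\one\right\}=\Adm(\Phi).
\]
At this point I would also check the standing hypotheses on $\Phi$ as a family of objects: there are only finitely many partitions of the finite vertex set $V$, so $\Phi$ is finite; and for each feasible $P$ the cut set $E_P$ is nonempty (since $G$ is connected with $k_P\ge 2$ parts), so each usage vector $\widetilde{\cN}(P,\cdot)^T$ is a nonzero, finite vector in $\R^E_{\ge 0}$.

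Next I would pass to blockers. Since $\Phi$ is a finite family, $\co(\Phi)$ is a polytope and $\Dom(\Phi)=\co(\Phi)+\R^E_{\ge 0}$ is a closed, convex, recessive subset of $\R^E_{\ge 0}$, so Fulkerson blocker duality applied to $\Phi$ gives $\BL(\Adm(\Phi))=\Dom(\Phi)$. Taking $\BL$ of the identity $\Adm(\Phi)=\Dom(\Ga)$ from the previous paragraph and using the Fulkerson relations for $\Ga$ recalled in the preliminaries, namely $\Adm(\widehat{\Ga})=\Dom(\Ga)$ and $\Adm(\Ga)=\BL(\Adm(\widehat{\Ga}))$, yields
\[
\Dom(\Phi)=\BL(\Adm(\Phi))=\BL(\Dom(\Ga))=\BL(\Adm(\widehat{\Ga}))=\Adm(\Ga)=\Dom(\widehat{\Ga}).
\]
Thus $\Phi$ has the same dominant — equivalently, the same admissible set $\Dom(\Ga)$, and hence the same $q$-modulus for every $q$ and every weight — as the Fulkerson blocker family $\widehat{\Ga}$, which is exactly what it means for $\Phi$ to be a Fulkerson dual family of $\Ga$.

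I do not expect a genuine obstacle here: the argument is a bookkeeping translation plus the abstract blocking duality already available. The only point worth a sentence is the closedness of $\Dom(\Phi)$, needed so that $\BL(\Adm(\Phi))$ equals $\Dom(\Phi)$ itself rather than its closure, and this is immediate from finiteness of $\Phi$. (If one wished to bypass (\ref{eq:chopraa}), the inclusion $\Dom(\Ga)\subseteq\Adm(\Phi)$ is the easy direction — every spanning tree meets the cut set $E_P$ of a feasible partition in at least $k_P-1$ edges — while the reverse inclusion is the substantive step; replacing that step by the Nash-Williams--Tutte theorem is the program carried out in Section~\ref{sec:ful}.)
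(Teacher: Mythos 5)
Your argument is correct and follows essentially the same route as the paper, which presents Theorem \ref{theo:chopra} precisely as the restatement of Chopra's identity (\ref{eq:chopraa}) in modulus language: dividing each constraint by $k_P-1>0$ gives $\Adm(\Phi)=\Dom(\Ga)$, and $\Dom(\Ga)=\BL(\Adm(\Ga))$ by Fulkerson blocker duality. Note only that your first paragraph already completes the proof, since $\Adm(\Phi)=\Dom(\Ga)=\BL(\Adm(\Ga))$ is exactly the definition of a Fulkerson dual family, so the subsequent passage to $\Dom(\Phi)=\Dom(\widehat{\Ga})$ is valid but redundant.
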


 Theorem \ref{theo:chopra} does not give the Fulkerson blocker family $\Gahat$ of the spanning tree family $\Ga$, but by Proposition \ref{theo:smallestF}, we have that 
\begin{align}\label{fact:1}
\Gahat \subset \Phi.
\end{align}

\begin{definition}\label{def:vertex-biconnected} A graph $G=(V,E)$ is said to be {\it vertex-biconnected}, if it has at least two vertices, is connected and the removal of any vertex along with all incident edges of that vertex does not disconnect the graph.
\end{definition}

In \cite{chopraon}, Chopra shows that an inequality in (\ref{eq:chopraa}) defines a facet of $\Dom(\Ga)$ if and only if the shrunk graph $G_P$ is vertex-biconnected \cite[Theorem 3.2]{chopraon}, thus he gives a minimal inequality description of $\Dom(\Ga)$.  That leads to a precise understanding of the Fulkerson blocker family $\widehat{\Ga}$ of $\Ga$. Let $\Theta \subset \Phi$ be the family of all feasible partitions $P$ whose shrunk graph $G_P$ is vertex-biconnected, with usage given as in (\ref{usage2}). Similar to the family $\Phi$, we also identify $\Ga, \widehat{\Ga}$, and $\Theta$ with the corresponding families of usage vectors. Then, \cite[Theorem 3.2]{chopraon} implies that 
\begin{align}\label{fact:0}
\Gahat = \Theta.
\end{align}
The proof of (\ref{eq:chopraa}) in \cite{chopraon} is based on a minimum spanning tree algorithm.


\section{Fulkerson dual pairs and $1$-modulus}\label{sec:pairs}
Let $\Ga$ be a family of objects. In this section, we introduce the notion of {\it Fulkerson dual pairs}. 

\begin{definition}
	Let $\Ga$ and $\widetilde{\Ga}$ be two sets of vectors in $\R^E_{\geq 0}$. We say that $\Ga$ and $\widetilde{\Ga}$ are a {\it Fulkerson dual pair} (or $\Gatil$ is a {\it Fulkerson dual family} of $\Ga$) if \[\Adm(\Gatil) = \BL(\Adm(\Ga)).\] 
\end{definition}
\begin{remark}
	If $\Gatil$ is a Fulkerson dual family of $\Ga$, then $\Ga$ is also a Fulkerson dual family of $\Gatil$ since \[\BL(\Adm(\Gatil)) = \BL(\BL(\Adm(\Ga))) = \Adm(\Ga).\]
\end{remark}
\begin{proposition}\label{theo:smallestF}
	Let $\Ga$ be a set of vectors in $\R^E_{\geq 0}$. Let $\Gahat$ be the Fulkerson blocker family of $\Ga$ and $\Gatil$ be a Fulkerson dual family of $\Ga$. Then, \[\Gahat \subset \Gatil.\] In other words, $\Gahat$ is the smallest Fulkerson dual family of $\Ga$. 
\end{proposition}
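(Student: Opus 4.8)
The goal is to show that every Fulkerson dual family $\Gatil$ of $\Ga$ contains the Fulkerson blocker family $\Gahat = \Ext(\Adm(\Ga))$. The key fact I would use is the Fulkerson blocker duality cited in the excerpt, which gives $\Dom(\Gahat) = \Adm(\Ga) = \BL(\Adm(\Gahat))$ and, dually, $\Adm(\Gahat) = \BL(\Adm(\Ga))$. Thus, by definition of a Fulkerson dual pair, $\Adm(\Gatil) = \BL(\Adm(\Ga)) = \Adm(\Gahat)$, so the two admissible sets coincide. The problem therefore reduces to a purely convex-geometric statement: if two sets of vectors $\Gatil, \Gahat \subset \R^E_{\geq 0}$ have the same admissible set, and $\Gahat$ consists exactly of the extreme points of that common admissible set, then $\Gahat \subseteq \Gatil$.

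To establish that reduction, first I would observe that $\Adm(\Gatil)$ is closed, convex and recessive (this is noted in the preliminaries for any admissible set), so it equals $\Dom(\widehat{\widetilde{\Ga}})$, and more to the point $\Adm(\Gatil) = \co(\Gatil) + \R^E_{\geq 0}$ is not literally true, so instead I would argue directly with the blocker characterization. The cleanest route: since $\Adm(\Gatil) = \Adm(\Gahat) = \Dom(\Gahat) = \co(\Gahat) + \R^E_{\geq 0}$, every element of $\Gahat$ lies in $\Adm(\Gatil)$, i.e. each $\eta \in \Gahat$ satisfies $\eta^T \rho \ge 1$ for all $\rho \in \Adm(\Ga)$. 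Now take a fixed $\eta_0 \in \Gahat$; it is an extreme point of $\Adm(\Ga) = \Dom(\Gatil) = \co(\Gatil) + \R^E_{\geq 0}$. An extreme point of $\co(\Gatil) + \R^E_{\geq 0}$ must be an extreme point of $\co(\Gatil)$ (adding the recession cone $\R^E_{\geq 0}$ cannot create new extreme points, and any extreme point of the sum lies in $\co(\Gatil)$ because otherwise it would be a nontrivial convex combination using a point of $\co(\Gatil)$ and a nonzero cone direction), and an extreme point of $\co(\Gatil)$ must be an element of $\Gatil$ itself (an extreme point of a convex hull of a finite set is one of the generators). Hence $\eta_0 \in \Gatil$.

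The main obstacle — and the step requiring the most care — is the last chain of claims about extreme points: that an extreme point of $\Dom(\Gatil) = \co(\Gatil) + \R^E_{\geq 0}$ necessarily belongs to $\Gatil$. Here one must handle two subtleties. First, one must show such an extreme point actually lies in $\co(\Gatil)$ and not merely in the sum; this follows because if $\eta_0 = c + r$ with $c \in \co(\Gatil)$ and $r \in \R^E_{\geq 0} \setminus \{0\}$, then $\eta_0 = \tfrac12(c) + \tfrac12(c + 2r)$ expresses $\eta_0$ as a midpoint of two distinct points of $\Dom(\Gatil)$, contradicting extremality — unless $r = 0$. Second, one must know that $\co(\Gatil)$ is a polytope (which it is, since $\Gatil$ has finitely many vectors — all the families in this paper are finite), so its extreme points are among the finitely many generators in $\Gatil$. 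I would state these as a short self-contained lemma about finitely generated recessive polyhedra, or simply inline the two one-line arguments. With that in hand, the conclusion $\Gahat \subseteq \Gatil$ is immediate, and "$\Gahat$ is the smallest Fulkerson dual family" is just a restatement: any dual family must contain $\Gahat$, and $\Gahat$ is itself a dual family by Fulkerson blocker duality.
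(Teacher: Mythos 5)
Your main argument is correct and is essentially the paper's own proof: both reduce, via the blocker identities, to the fact that each $\widehat{\ga}\in\Gahat=\Ext(\Adm(\Ga))$ is an extreme point of $\Adm(\Ga)=\Dom(\Gatil)=\co(\Gatil)+\R^E_{\geq 0}$, kill the cone part with the same midpoint trick $\widehat{\ga}=\tfrac12 c+\tfrac12(c+2r)$, and then use extremality once more (equivalently, that an extreme point of the convex hull of a finite set is one of its generators) to conclude $\widehat{\ga}\in\Gatil$. However, two of your intermediate assertions are false as stated, though neither is used in the operative argument: $\Adm(\Gahat)$ equals $\Dom(\Ga)$, not $\Dom(\Gahat)$ (the latter is $\Adm(\Ga)$), so the chain $\Adm(\Gatil)=\Adm(\Gahat)=\Dom(\Gahat)=\co(\Gahat)+\R^E_{\geq 0}$ and the ensuing claim that every $\eta\in\Gahat$ satisfies $\eta^T\rho\geq 1$ for all $\rho\in\Adm(\Ga)$ are wrong (for the path on two edges, $(1,0)$ is an extreme point of $\Adm(\Ga)$ but fails this inequality against the admissible $(0,1)$); likewise $\Gahat$ is the set of extreme points of $\Adm(\Ga)$, not of the common admissible set $\Adm(\Gatil)=\Adm(\Gahat)$, so your stated ``reduction'' should be rephrased. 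With those asides deleted or corrected, the proof stands and matches the paper's route.
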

\begin{proof}
	We have \[\Adm(\Gatil) = \BL(\Adm(\Ga)) = \Adm(\Gahat) = \Dom(\Ga)\] and \[\Dom(\Gatil) = \BL(\Adm(\Gatil)) = \BL(\Adm(\Gahat)) = \Adm(\Ga).\]
	Therefore, we obtain that 
	\begin{equation}\label{eq:gatilde-admga}
		\Gatil \subset \Dom(\Gatil) = \Adm(\Ga),
	\end{equation}
	and
	\begin{equation}\label{eq:gahat-domgatilde}
		\Gahat \subset \Adm(\Ga) = \Dom(\Gatil).
	\end{equation}
	Let $\widehat{\ga} \in \Gahat$, we want to show that $\widehat{\ga} \in \Gatil$. By (\ref{eq:gahat-domgatilde}), we have \[\widehat{\ga} = \sum\limits_{i\in I} \mu_i \widetilde{\ga}_i + z,\] where $\widetilde{\ga}_i \in \Gatil$ and $\mu_i\ge 0$ for all $i\in I$, $\sum\limits_{i\in I}\mu_i =1$, and $z \in \R^E_{\geq 0}.$ 
	Then, we rewrite $\widehat{\ga}$ as follows,
	\[ \widehat{\ga} = \frac{1}{2}\left( \sum\limits_{i\in I} \mu_i \widetilde{\ga}_i \right) + \frac{1}{2}\left( \sum\limits_{i\in I} \mu_i \widetilde{\ga}_i +2z \right).\]
	Because $\widehat{\ga}$ is an extreme point of $\Adm(\Ga)$ and by (\ref{eq:gatilde-admga}), we obtain
	\[
	\widehat{\ga} = \sum\limits_{i\in I} \mu_i \widetilde{\ga}_i= \sum\limits_{i\in I} \mu_i \widetilde{\ga}_i+2z.
	\]
	This implies that $z=0 \in \R^E_{\geq 0}$ and $\widehat{\ga} = \sum\limits_{i\in I} \mu_i \widetilde{\ga}_i$. Using the fact that $\widehat{\ga}$ is an extreme point of $\Adm(\Ga)$ one more time, we obtain $\widetilde{\ga}_i = \widehat{\ga}$ for all $i \in I$. Therefore, $\widehat{\ga} \in \Gatil$. In conclusion, $\Gahat \subset \Gatil.$
\end{proof}
As we have seen, $\Ga$ and $\Gahat$ are a Fulkerson dual pair and $\Ga$ is a Fulkerson dual family of $\Gahat$. A natural question to ask is under what conditions $\Ga$ is also the Fulkerson blocker family of $\Gahat$, namely, when do we have equality in (\ref{eq:gahathat-ga}).

	An element  $\ga \in \Ga$ is said to be  {\it essential} if  it represents a facet of $\Adm(\Ga)$ and {\it inessential} otherwise.  Note that $\ga$ is inessential if and only if the usage vector of $\ga$ is greater than or equal to a convex combination of different other usage vectors of $\Ga$. If every $\ga \in \Ga$ is essential, we say that  $\Ga$ is {\it proper}.
	
	In the special case, when all usage vectors of $\Ga$ belong to $\left\{ 0,1 \right\} ^E$. Then, $\Ga$ is proper if and only if the support set $\lbrace e \in E: \cN(\ga,e) \neq 0 \rbrace$ of any  usage vector of $\Ga$ does not contain the support set of any other usage vector of $\Ga$. Such a family is called a {\it clutter} in the combinatorics literature. For example, the spanning tree family of an undirected connected graph with usage vectors given by the indicator functions is a clutter.
	
	An important property of proper families is that if $\Ga$ is proper, then so is $\widehat{\Ga}$ and $\widehat{\widehat{\Ga}} =\Ga $, see \cite{fulkersonanti}.  In this case, $\Ga$ is the Fulkerson blocker family of $\Gahat$.

For some family of objects $\Gamma$, sometimes it is difficult to find the Fulkerson blocker family $\Gahat$, and it is easier to obtain a Fulkerson dual family $\Gatil$. Because we are interested in modulus, it will be enough to work with a dual family $\Gatil$, since what matters is the admissible set $\Adm(\Gatil)$.

Next,  we recall the 1-modulus problem $\Mod_{1,\si}(\Ga)$:

\begin{equation} \label{prob:mod1}
	\begin{array}{ll}
		\underset{\rho \in \R^{E}}{\text{minimize}}    &\si^T\rho \\
		\text{subject to } &\sum\limits_{e \in E} \cN(\ga,e)\rho(e) \geq 1, \quad \forall \ga \in \Ga; \\
		&\rho \geq 0.  	 
	\end{array}
\end{equation}
The dual problem of $\Mod_{1,\si}(\Ga)$ is
\begin{equation}  \label{dualr}
	\begin{array}{ll}
		\underset{\lambda \in \R^{\Ga}}{\text{maximize}}    &\lambda^T\mathbf{1} \\
		\text{subject to } &\sum\limits_{e \in \ga} \lambda(\ga) \leq \sigma(e), \quad \forall e \in E; \\
		&\lambda \geq 0, 	 
	\end{array}
\end{equation}
where $\one$ is the column vector of all ones. When $\Ga$ is the spanning tree family with usage vectors given by the indicator functions, the problem (\ref{dualr}) is known as the {\it tree packing} problem for graphs.

 For any set of weights $\sigma \in \mathbb{R}^E_{> 0}$, we have
\begin{align}\label{mod:mm}
	\Mod_{1,\sigma}(\Gamma)=\min\limits_{\widehat{\ga} \in \widehat{\Ga}}\sigma^{\top}\widehat{\ga}.
\end{align}
This follows because, when $p =1$, $1$-modulus is a linear program. Therefore, at least one optimal density $\rho^{\ast}$ in this case must occur at an extreme point of $\Adm(\Ga)$, see \cite[Theorem 2.8]{bertsimas}. Moreover, we have the following useful characterization of $\widehat{\Ga}$.
\begin{lemma}\label{lem:vertex}
	For every $\widehat{\gamma} \in \widehat{\Gamma}$, there is a choice of $\sigma \in \mathbb{R}_{>0}^E$ such that $\widehat{\gamma}$ is the unique solution for both $\Mod_{1,\sigma}(\Gamma)$ and $\min\limits_{\ga \in \widehat{\Ga}}\sigma^{\top}\ga$.
\end{lemma}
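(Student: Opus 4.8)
The plan is to realize $\widehat{\Gamma}$ as the vertex set of the polyhedron $\Adm(\Gamma)$ and then to invoke the classical linear programming fact that every vertex of a polyhedron is the unique minimizer of some linear functional over that polyhedron, taking care to arrange that this functional is \emph{strictly} positive on $E$.

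First I would note that, since $\Gamma$ is a finite family, $\Adm(\Gamma)=\{\rho\in\R^E:\cN\rho\ge\one,\ \rho\ge 0\}$ is a polyhedron defined by finitely many linear inequalities, and it contains no line because $\Adm(\Gamma)\subset\R^E_{\ge 0}$. Hence each $\widehat{\gamma}\in\widehat{\Gamma}=\Ext(\Adm(\Gamma))$ is a vertex of $\Adm(\Gamma)$, so there is a cost vector $c\in\R^E$ with $c^{\top}\widehat{\gamma}<c^{\top}\rho$ for every $\rho\in\Adm(\Gamma)$ with $\rho\ne\widehat{\gamma}$; see, e.g., \cite[Theorem 2.3]{bertsimas}. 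Next I would upgrade $c$ to a strictly positive vector using that $\Adm(\Gamma)$ is recessive, i.e. $\Adm(\Gamma)+\R^E_{\ge 0}=\Adm(\Gamma)$. If $c_e<0$ for some $e$, then $c^{\top}(\widehat{\gamma}+t\,\ones_{\{e\}})\to-\infty$ as $t\to\infty$ while $\widehat{\gamma}+t\,\ones_{\{e\}}\in\Adm(\Gamma)$, contradicting $c^{\top}\widehat{\gamma}\le c^{\top}\rho$ on $\Adm(\Gamma)$; so $c\ge 0$. If $c_e=0$ for some $e$, then $\widehat{\gamma}+\ones_{\{e\}}\in\Adm(\Gamma)$ is a point distinct from $\widehat{\gamma}$ with the same value under $c^{\top}\cdot$, again contradicting the uniqueness of the minimizer. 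Thus $c\in\R^E_{>0}$, and I set $\sigma:=c$.

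With this choice of $\sigma$, the density $\widehat{\gamma}$ is by construction the unique optimal solution of $\Mod_{1,\sigma}(\Gamma)=\min_{\rho\in\Adm(\Gamma)}\sigma^{\top}\rho$. Since $\widehat{\Gamma}\subset\Adm(\Gamma)$, it follows immediately that $\widehat{\gamma}$ is also the unique minimizer of $\sigma^{\top}\ga$ over $\ga\in\widehat{\Gamma}$, and by (\ref{mod:mm}) the two optimal values coincide, so $\widehat{\gamma}$ is indeed the unique solution of $\min_{\ga\in\widehat{\Gamma}}\sigma^{\top}\ga$ as well. The only point requiring genuine attention is securing $\sigma\in\R^E_{>0}$ rather than merely $\sigma\ge 0$ (the latter being automatic from recessiveness and boundedness); this is delivered by the recessiveness argument above, which in fact shows strict positivity is forced by the uniqueness of the optimizer. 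Everything else is bookkeeping once $\widehat{\Gamma}$ is identified with the vertex set of the polyhedron $\Adm(\Gamma)$.
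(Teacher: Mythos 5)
Your proof is correct, but it reaches the conclusion by a different route than the paper. The paper does not use \cite[Theorem 2.3]{bertsimas} as a black box: it reruns that theorem's proof, picking $|E|$ linearly independent constraints of $\Adm(\Gamma)$ active at $\widehat{\gamma}$ and defining $\sigma$ explicitly as the sum of their normal vectors $a_i$; nonnegativity of $\sigma$ is then immediate (the $a_i$ are usage vectors or coordinate indicators), and strict positivity follows because a nonsingular nonnegative matrix has no zero row, so the vector of row sums is strictly positive. You instead invoke the extreme-point/vertex equivalence to obtain \emph{some} certifying functional $c$, and then force $c>0$ abstractly: recessiveness of $\Adm(\Gamma)$ (which the paper records, and which holds since $\cN\ge 0$) rules out $c_e<0$ via unboundedness along $\ones_{\{e\}}$, and uniqueness of the minimizer rules out $c_e=0$ since $\widehat{\gamma}+\ones_{\{e\}}$ would be a second optimum. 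Both arguments are sound; the passage to $\min_{\ga\in\widehat{\Ga}}\sigma^{\top}\ga$ via $\widehat{\Ga}\subset\Adm(\Ga)$ is the same in both. What your version buys is brevity and a slightly stronger structural observation: for a recessive polyhedron in $\R^E_{\ge 0}$, \emph{every} functional certifying a vertex is automatically strictly positive, so no special construction is needed. What the paper's version buys is an explicit, constructive $\sigma$ (a sum of tree usage vectors and edge indicators), which makes the nonnegativity and the uniqueness computation concrete and self-contained rather than resting on the cited equivalence. Minor remarks only: your aside that $\Adm(\Gamma)$ contains no line is not needed for the extreme-point-to-vertex direction, and the appeal to (\ref{mod:mm}) at the end is superfluous since $\widehat{\Ga}\subset\Adm(\Ga)$ already gives uniqueness over the subfamily; neither affects correctness.
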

This is a modification of \cite[Theorem 2.3]{bertsimas}.
For the convenience of the reader, we recall the proof of \cite[Theorem 2.3]{bertsimas} and show the needed changes.
\begin{proof}[Proof of Lemma \ref{lem:vertex}]
	We recall that the polyhedron $\Adm(\Ga)$ is defined as 
	\[\Adm(\Ga) = \left\{ \rho \in \R^E:  a_\ga^T\rho \geq b_\ga \text{ for all } \ga \in \Ga, a_e^T \rho \geq b_e  \text{ for all } e \in E \right\}, \]
	where $a_\ga^T$ is the usage vector $\cN(\ga,\cdot)$ and $b_\ga=1$, while  $a_e^T$ is the indicator $\de_{e}$ and $b_e=0$. 
	
	Let  $\widehat{\gamma}$ be an extreme point of $\Adm(\Ga)$. By \cite[Theorem 2.3]{bertsimas}, there exists $|E|$ constraints of $\Adm(\Ga)$ denoted by $\left\{ a_i^T\rho \geq b_i : i =1,2,\dots,|E| \right\} $ such that for $ i =1,2,\dots,|E| $, 
	\begin{equation}\label{eq:aibi-equal}
		a_i^T\widehat{\gamma} = b_i,
	\end{equation} 
	and  $|E|$ vectors $ a_i$ are linearly independent. Here, each $(a_i,b_i)$ may equal $(a_\ga,b_\ga)$ for some $\ga\in\Ga$ or $(a_e,b_e)$ for some $e\in E$.
	
	Let $\si := \sum\limits_{i =1}^{|E|} a_i$. For any $\rho \in \Adm(\Ga)$, we have  $ a_i^T \rho \geq b_i$, for $ i =1,2,\dots,|E|$. Hence,
	\begin{align}
		\si^T\rho  =  \left(\ \sum\limits_{i =1}^{|E|} a_i^T \right)\rho &\geq  \sum\limits_{i =1}^{|E|} b_i
		\label{ex11} \\
		& =  \left(\ \sum\limits_{i =1}^{|E|} a_i^T \right)\widehat{\gamma} & \text{(by (\ref{eq:aibi-equal}))} \notag \\
		& =  \si^T\widehat{\gamma}. \notag
	\end{align}
	That means $\widehat{\gamma}$ is an optimal solution for $\Mod_{1,\sigma}(\Gamma)$. Furthermore, equality holds in (\ref{ex11}) if and only if  $ a_i^T \rho = b_i$ for $ i =1,2,\dots,|E|$. Since the vectors $ a_i$, for $i =1,2,\dots,|E| $, are linearly independent, $\widehat{\gamma}$ is the unique solution of the system  $\lbrace a_i^T\rho = b_i : i =1,2,\dots,|E| \rbrace $. Therefore,  $\widehat{\gamma}$ is the unique optimal solution  for $\Mod_{1,\sigma}(\Gamma)$.
	
	Finally, we show that this choice of $\sigma$ belongs to  $\mathbb{R}^E_{> 0}$. 
	Notice that $ a_i \in \R^E_{\geq0}$, for $i =1,2,\dots,|E| $, therefore $\si  \in \R^E_{\geq0}$. Moreover, since the determinant of the $|E| \times |E|$ matrix $A$ formed by $|E|$ columns $ a_i$, for $ i =1,2,\dots,|E| $, is nonzero (by linearly independence), there is no row in $A$ containing all zeros. Thus, $\si >0$, this is because $\si$ is the vector of row sums for $A$. Therefore, $\si  \in \R^E_{>0}$.
\end{proof}

\section{Modulus of feasible partitions}\label{sec:beu}
\subsection{Beurling partitions}
Let $G=(V,E)$ be a connected graph. Let $\Ga = \Ga_G$ be the spanning tree family of $G$. Let $\Phi$ be the family of all feasible partitions $P$ of $G$, with usage vectors defined as in (\ref{usage2}). Let $\eta^*$ be the optimal density for $\Mod_{2}(\Phi)$.
For any feasible partition $P$ with the cut set $E_P$, denote  $\eta^*(E_P):= \sum\limits_{e \in E_P} \eta^*(e)$. In this section, we introduce the notion of {\it Beurling partitions}.

\begin{definition}\label{def:beurling-partition}
	Let $G=(V,E)$ be a connected graph. Let $\Phi$ be the family of all feasible partitions $P$ of $G$, with usage matrix $\widetilde{\cN}$ defined as in (\ref{usage2}). Let $\eta^*$ be the optimal density for $\Mod_{2}(\Phi)$. A  feasible partition $P$ of $G$ is said to be a {\it Beurling partition} if it has minimal usage with respect to $\eta^*$, equivalently, $ \widetilde{\cN}(P,\cdot)\eta^*=1$. In other words, $P$ is a Beurling partition if and only if  $\eta^*(E_P)= k_P -1$.
\end{definition}
\begin{remark}
The trivial partition of the node set $V$ is a Beurling  partition because $\eta^*(E)=|V|-1$.
\end{remark}
Beurling partitions are closely related to fair trees, the relation between Beurling partitions and  fair trees is described in the following theorem.
\begin{theorem} \label{fair} Let $G=(V,E)$ be a connected graph. Let $P$ be a feasible partition of $G$. Then, $P$ is a Beurling partition if and only if the restriction of every fair tree $\gamma$ onto $E_P$ is a spanning tree of $G_P$.
\end{theorem}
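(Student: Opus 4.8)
The plan is to connect the two sides through the complementary slackness / KKT structure relating the spanning tree family $\Ga$ and its dual family $\Phi$ (with usage vectors as in (\ref{usage2})), which are a Fulkerson dual pair by Theorem \ref{theo:chopra}. Let $\eta^*$ be the optimal density for $\Mod_2(\Phi)$. By Fulkerson duality for modulus, the optimal density $\rho^*$ for $\Mod_2(\Ga)$ satisfies $\rho^* = \Mod_2(\Ga)\,\eta^*$ (up to the appropriate scaling constant), so ``$P$ is Beurling'' (i.e. $\eta^*(E_P) = k_P - 1$, equivalently $\widetilde{\cN}(P,\cdot)\eta^* = 1$) is the same as saying the constraint of $P$ is tight for $\rho^*$, i.e. $\ell_{\rho^*}(\widetilde{\cN}(P,\cdot)) = 1$. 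On the other side, by Theorem \ref{meomod} applied to $\Ga$, a tree $\ga$ is fair (lies in the support of some optimal $\mu^*$ for $\MEO(\Ga) = \Mod_2(\widehat{\Ga})$) iff $\ell_{\rho^*}(\ga) = 1$, i.e. $\ga$ is ``$\rho^*$-tight''; and $\eta^* = \cN^T\mu^*$ is the edge-usage vector. So the statement to prove becomes: $\eta^*(E_P) = k_P - 1$ if and only if every $\rho^*$-tight spanning tree $\ga$ uses exactly $k_P - 1$ edges in $E_P$.

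For the forward direction, suppose $P$ is Beurling, so $\eta^*(E_P) = k_P - 1$. Pick any optimal pmf $\mu^*$ supported on fair trees, so $\eta^*(e) = \bP_{\mu^*}(e \in \underline{\ga})$ and hence $\eta^*(E_P) = \bE_{\mu^*}\big[|\underline{\ga} \cap E_P|\big]$. Now every spanning tree $\ga$ of $G$, when restricted to $E_P$, must connect all $k_P$ blocks of the shrunk graph $G_P$, hence $|\ga \cap E_P| \ge k_P - 1$ deterministically. Since the expectation equals $k_P - 1$, we get $|\ga \cap E_P| = k_P - 1$ for every $\ga$ in the support of $\mu^*$, i.e. for every fair tree; and $|\ga \cap E_P| = k_P - 1$ together with $\ga \cap E_P$ connecting $G_P$ forces $\ga \cap E_P$ to be a spanning tree of $G_P$. (One should note this holds for \emph{every} fair tree, not just those in one particular $\mu^*$, since a fair tree by definition lies in the support of \emph{some} optimal $\mu^*$, and the argument applies to each.) For the reverse direction, suppose every fair tree $\ga$ restricts to a spanning tree of $G_P$ on $E_P$, i.e. $|\ga \cap E_P| = k_P - 1$ for all $\ga \in \Ga^f$. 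Fix an optimal $\mu^*$; it is supported on fair trees, so $\eta^*(E_P) = \bE_{\mu^*}[|\underline{\ga} \cap E_P|] = k_P - 1$, hence $P$ is Beurling.

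The main obstacle — and the place requiring care rather than a routine calculation — is the logical quantifier in the definition of fair tree: ``fair'' means ``in the support of \emph{some} optimal pmf.'' In the forward direction I need the conclusion to hold for all fair trees simultaneously, so I must run the expectation argument for an arbitrary optimal $\mu^*$ and observe that the deterministic bound $|\ga\cap E_P|\ge k_P-1$ forces equality on every support, and then invoke that every fair tree appears in some optimal support. (Alternatively, one can observe that the set of optimal pmfs is a polytope whose vertices' supports together cover $\Ga^f$, or argue via a convex combination of optimal pmfs that has all fair trees in its support, since $\cP(\Ga)$-convex combinations of optimal laws are optimal.) The remaining ingredients — that $\ga\cap E_P$ connects $G_P$ for any spanning tree $\ga$ (already noted in Section~\ref{sec-feasible-spanning}), that a connected spanning subgraph of $G_P$ with exactly $k_P-1$ edges is a spanning tree, and the translation between $\eta^*(E_P) = k_P - 1$ and $\widetilde{\cN}(P,\cdot)\eta^* = 1$ — are immediate from the definitions and the setup in the preliminaries.
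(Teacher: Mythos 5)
Your proof is correct and takes essentially the same route as the paper: identify $\eta^*$ with $\cN^T\mu^*$ via Theorem \ref{meomod}, use the deterministic bound $|\ga\cap E_P|\ge k_P-1$, and convert the identity $\eta^*(E_P)=\bE_{\mu^*}\big[|\underline{\ga}\cap E_P|\big]=k_P-1$ into equality for every tree in the support of an arbitrary optimal pmf, handling the ``some optimal $\mu^*$'' quantifier just as the paper implicitly does. One cosmetic caveat: your parenthetical claim that $\ga$ is fair if and only if $\ell_{\rho^*}(\ga)=1$ is not fully justified by Theorem \ref{meomod} (condition (iii) gives only that fair trees are $\rho^*$-tight, not the converse), but your actual argument never uses that converse, so the proof stands.
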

\begin{proof}
We recall that for every spanning tree $\ga$ of $G$, the restriction of $\ga$ onto $E_P$ forms a connected spanning subgraph of the shrunk graph  $G_P$. Let $\eta^*$ be the optimal density for $\Mod_{2}(\Phi)$ and let $\mu^{\ast}$ be any optimal pmf  for the $\MEO(\Ga)$ problem. Let $\cN$ be the usage matrix associated with $\Ga$ where the usage vector $\cN(\ga,\cdot)^T$ of each spanning tree $\ga$ is the indicator function $\ones_{\ga}$. Also, let $\widetilde{\cN}$ be the usage matrix for $\Phi$ as defined in (\ref{usage2}).
We have that $\widetilde{\cN}(P,\cdot)$ is a $1 \times |E|$ matrix, $\cN^T$ is a $|E| \times |\Ga|$ matrix, $\mu^{\ast}$ is a $|\Ga| \times 1$ matrix and $\eta^*$ is a $|E| \times 1$ matrix. By Theorem \ref{meomod}, we have $\eta^{\ast} = \cN^T\mu^{\ast}$. Then, we have the following matrix multiplication,
\begin{align*}
\widetilde{\cN}(P,\cdot)\eta^* &=   \widetilde{\cN}(P,\cdot)\left( \cN^T\mu^{\ast} \right) \\
         & = \left(\widetilde{\cN}(P,\cdot) \cN^T\right)\mu^{\ast}\\
         &= \sum\limits_{\ga \in \Ga} \left[ \left( \frac{1}{k_P-1}\ones_{E_P}\cdot  			\ones_{\ga} \right)\mu^*(\ga) \right],      
\end{align*} 
where $a\cdot b$ is the dot product between any vectors $a$ and $b$. Since $\ones_{E_P}\cdot\ones_{\ga} = |E_P \cap \ga| \geq k_P-1$  for any feasible partition $P$ and any spanning tree $\ga$, we obtain that $\widetilde{\cN}(P,\cdot)\eta^*=1$ if and only if $|E_P \cap \ga|= k_P-1$ for any spanning tree $\ga$ satisfying $\mu^*(\ga)>0$. In other words, $\widetilde{\cN}(P,\cdot)\eta^*=1$ if and only if the restriction of every fair tree $\gamma$ onto $E_P$ is a spanning tree of $G_P$.
\end{proof}

\subsection{Serial rule}
Let $G=(V,E)$ be a connected graph. Let $\Ga = \Ga_G$ be the spanning tree family of $G$. 
Given a feasible partition $P$, let $\Gamma^P$ be the set of all spanning trees of $G$ whose restriction onto  $E_P$ is a spanning tree of the shrunk graph $G_P$. Let $\Ga^f_G$ be the set of all fair trees of $G$. By Theorem \ref{fair}, $P$ is  a Beurling partition if and only if $\Ga^f_G \subset \Gamma^P$. Next, we study the relation between $\Ga^P$ and $P$, for arbitrary feasible partitions.
\begin{theorem}\label{par}
Let $G=(V,E)$ be a connected graph. Let $P= \left\{ V_1,V_2,...,V_{k_P} \right\}$ be a feasible partition of $G$.  Let $G_i = (V_i,E_i)$ be the subgraph induced by $V_i$ for each $i=1,2,\dots,k_P$. Let $\Gamma^P$ be the set of all spanning trees of $G$ whose restriction onto  $E_P$ is a spanning tree of the shrunk graph $G_P$. Then, we have
\bi
\item[(i)] The restrictions map trees to trees, meaning that
\begin{equation}\label{eq:part1}
  \psi_{E_P}(\Gamma^P) = \Gamma_{G_P} ,  \quad \psi_{E_i}(\Gamma^P) = \Gamma_{G_i}    \quad \forall i=1,2,\dots,k_P.
\end{equation} 
\item[(ii)] The partition $ \lbrace E_1,E_2,...,E_{k_{P}},E_P \rbrace $ of the edge set $E$  divides  $\Gamma^P$, namely,
\begin{align}  \label{tp0}
\Gamma^P &= \psi_{E_1}(\Gamma^P) \oplus \psi_{E_2}(\Gamma^P) \oplus ... \oplus \psi_{E_{k_{P}}}(\Gamma^P) \oplus \psi_{E_P} (\Gamma^P) \notag \\
&= \Ga_{G_1} \oplus \Ga_{G_2} \oplus ... \oplus  \Ga_{G_{k_P}} \oplus  \Ga_{G_P}.
\end{align}
\ei

\end{theorem}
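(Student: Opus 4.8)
The plan is to study the \emph{restriction map}
$\Psi\colon\gamma\mapsto\bigl(\psi_{E_1}(\gamma),\dots,\psi_{E_{k_P}}(\gamma),\psi_{E_P}(\gamma)\bigr)$
on $\Gamma^P$ and to prove that it is a bijection onto
$\Gamma_{G_1}\times\cdots\times\Gamma_{G_{k_P}}\times\Gamma_{G_P}$; both (i) and (ii) then follow at once, since the assertion that $\{E_1,\dots,E_{k_P},E_P\}$ divides $\Gamma^P$ is exactly the statement that $\Psi$ is a bijection whose inverse reassembles the pieces by union. First I would record that $\{E_1,\dots,E_{k_P},E_P\}$ really is a partition of $E$: since $G$ has no self-loops and $\{V_i\}$ partitions $V$, every edge of $G$ lies in exactly one of these blocks. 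Consequently every $\gamma$ is the disjoint union of its pieces $\gamma\cap E_j$, and is recovered uniquely from them, which gives injectivity of $\Psi$ for free.

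Next I would show $\Psi$ is well defined, i.e.\ the forward inclusions $\psi_{E_i}(\Gamma^P)\subseteq\Gamma_{G_i}$ and $\psi_{E_P}(\Gamma^P)\subseteq\Gamma_{G_P}$, by an edge count. Fix $\gamma\in\Gamma^P$. Being acyclic, each $\gamma\cap E_i$ is a forest in $G_i$, so $|\gamma\cap E_i|\le|V_i|-1$. Combining $|\gamma|=|V|-1$, the defining property that $\gamma\cap E_P$ is a spanning tree of the $k_P$-vertex graph $G_P$ (so $|\gamma\cap E_P|=k_P-1$), and $|V|=\sum_i|V_i|$, one obtains $\sum_{i=1}^{k_P}|\gamma\cap E_i| = |V|-k_P = \sum_{i=1}^{k_P}(|V_i|-1)$. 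Since each summand is at most its bound while the two totals coincide, equality must hold for every $i$, so each $\gamma\cap E_i$ is a spanning forest of $G_i$ with $|V_i|-1$ edges, hence a spanning tree of $G_i$; and $\psi_{E_P}(\gamma)\in\Gamma_{G_P}$ by definition of $\Gamma^P$.

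Surjectivity of $\Psi$ is the main point and gives the $\oplus$-decomposition in (ii). Given spanning trees $T_i$ of $G_i$ for each $i$ and a spanning tree $T_P$ of $G_P$, put $\gamma:=T_P\cup T_1\cup\cdots\cup T_{k_P}$. A direct count gives $|\gamma|=(k_P-1)+\sum_i(|V_i|-1)=|V|-1$, so it suffices to show $\gamma$ is connected, since a connected subgraph on $|V|$ vertices with $|V|-1$ edges is a spanning tree of $G$; once that is done, $\gamma\cap E_P=T_P$ is a spanning tree of $G_P$, hence $\gamma\in\Gamma^P$ and $\Psi(\gamma)=(T_1,\dots,T_{k_P},T_P)$. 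To see $\gamma$ is connected, take $u\in V_a$ and $w\in V_b$, choose the path in $T_P$ from $v_a$ to $v_b$ in $G_P$, and note that each of its edges is a cut edge of $G$ joining two consecutive parts; within each part that the path visits, the spanning tree $T_i$ of $G_i$ connects the relevant pair of endpoints, and concatenating these internal paths with the cut edges produces a $u$--$w$ walk inside $\gamma$.

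I expect this last connectedness/reassembly step to be the only place that requires care — spelling out precisely that a path in the shrunk graph, together with the internal spanning trees $T_i$, stitches into a connected spanning subgraph of $G$. Everything else reduces to the edge-counting identity above and the bookkeeping observation that a bijective restriction map along the edge partition $\{E_1,\dots,E_{k_P},E_P\}$ is exactly the claimed decomposition (\ref{tp0}).
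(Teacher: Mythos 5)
Your proposal is correct and follows essentially the same route as the paper: the forward inclusions via the edge-counting identity $\sum_i|\gamma\cap E_i| = \sum_i(|V_i|-1)$, and the reverse direction via the reassembly $\gamma = T_P\cup T_1\cup\cdots\cup T_{k_P}$, which is exactly the construction in the paper's proof. Packaging both parts as bijectivity of the restriction map $\Psi$ is only a cosmetic difference, and your explicit stitching argument for connectedness fills in a step the paper states tersely.
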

\begin{proof}
We start by proving (i). Let $\Ga_{G_P}$ be the set of all spanning trees of $G_P$. 
By the definition of $\Ga^P$, we have 
\begin{equation}\label{eq:gamp3}
\psi_{E_P}(\Gamma^P) \subset \Gamma_{G_P}.
\end{equation}
To show the other direction, let $\gamma_P$ be an arbitrary spanning tree of $G_P$. For each  $i=1,2,\dots,k_P$, let $\gamma_i$ be an arbitrary spanning tree of $G_i$. 
Consider the subgraph 
\begin{equation}\label{eq:construct-tree}
\gamma := \left(\bigcup\limits_{i =1}^{k_{P}}\gamma_i\right) \cup \gamma_P
\end{equation}
of $G$. 
We have that $\gamma$ is connected, spans $G$ since $\ga_i$ spans $V_i$. Moreover, it has exactly $|V|-1$ edges  because \[|\ga| = (|V_1|-1)+(|V_2|-1)	+... +(|V_{k_{P}}|-1)+(k_{P}-1)=|V|-1.\]
Therefore, $\ga$ is a spanning tree of $G$ and $\psi_{E_P}(\ga) = \ga_P$. That means $\ga \in \Ga^P,$ and we have shown that $\Gamma_{G_P}\subset \psi_{E_P}(\Gamma^P)$. Combining with (\ref{eq:gamp3}) we have proved the first claim in (i).
 
For the second claim in (i), we first show that $ \Gamma_{G_i}\subset\psi_{E_i}(\Gamma^P)$.
The argument is similar as above. Given a tree $\ga_i\in  \Gamma_{G_i}$, we pick trees arbitrarily in $ \Gamma_{G_j}$ for $j\ne i$ and in $\Gamma_{G_P}$. Then, as before, the subgraph $\ga$ constructed as in (\ref{eq:construct-tree}) must be a spanning tree of $G$ and is in $\Ga^P$ by construction. So, $\ga_i\in \psi_{E_i}(\Gamma^P)$.

Conversely, let $\gamma$ be an arbitrary spanning tree in $\Gamma^P$, we have $|\psi_{E_P}(\gamma)| = k_P-1$ and $\psi_{E_i}(\gamma)$ is a spanning forest of $G_i$ for all  $i=1,2,\dots,k_P$. Then,
\begin{align*}
|V|-1= |\gamma| &= |\psi_{E_1}(\gamma)| + | \psi_{E_2}(\gamma)| + ... + |\psi_{E_{k_{P}}}(\gamma)| +| \psi_{E_P}(\gamma)|\\
& \leq (|V_1|-1)+(|V_2|-1)	+... +(|V_{k_{P}}|-1)+(k_{P}-1)\\
&=|V|-1.
\end{align*} 
Hence, $|\psi_{E_i}(\gamma)|= |V_i|-1$ for all  $i=1,2,\dots,k_P$. That implies $\psi_{E_i}(\gamma)$ is a spanning tree of $G_i$. In other words, 
\begin{equation}\label{eq:gamp2}
\psi_{E_i}(\Gamma^P) \subset \Gamma_{G_i} \quad \text{ for all } i=1,2,\dots,k_P.
\end{equation}
This concludes the proof of (i).

Next, we show (ii). The second equality follow by part (i). To show the first equality,
since $ \lbrace E_1,E_2,...,E_{k_{P}},E_P \rbrace $ is a partition of the edge set $E$, we have that for every spanning tree $\ga$ in $\Gamma^P$, it can be decomposed as  $\ga = \psi_{E_1}(\ga)  \cup \psi_{E_2}(\ga)  \cup \dots  \cup \psi_{E_{k_{P}}}(\ga) \cup \psi_{E_P} (\ga).$ So,
\begin{equation}\label{tp1}
\Gamma^P \subset \psi_{E_1}(\Gamma^P) \oplus \psi_{E_2}(\Gamma^P) \oplus ... \oplus \psi_{E_{k_{P}}}(\Gamma^P) \oplus \psi_{E_P} (\Gamma^P).
\end{equation}
Conversely,
 if $\ga_i\in\Ga_{G_i}$ and $\ga_P\in\Ga_{G_P}$, constructing $\ga$ as in (\ref{eq:construct-tree}), we see that
 \begin{equation}\label{tp2}
 \Ga_{G_1} \oplus \Ga_{G_2} \oplus ... \oplus  \Ga_{G_{k_P}} \oplus  \Ga_{G_P} \subset \Gamma^P.
 \end{equation}
 This proves (ii).
\end{proof}
 Recall the following definition from  \cite{pietrofairest}.
\begin{definition}\label{def:restriction-prop}
Given a subgraph $H$ of $G$, we say that $H$ has the {\it restriction property}, if every fair tree $\ga \in \Ga^f_G$ restricts to a spanning tree of $H$. 
 \end{definition}
 Then, we have the following corollary.
\begin{corollary}\label{coro-res}
Let $G=(V,E)$ be a connected graph.  Let $P= \left\{ V_1,V_2,...,V_{k_P} \right\}$ be a feasible partition of $G$.  Let $G_i = (V_i,E_i)$ be the subgraph induced by $V_i$ for $i=1,2,\dots,k_P$. Then, $P$ is a Beurling partition if and only if $G_i$ has the restriction property for  $i=1,2,\dots,k_P$.
\end{corollary}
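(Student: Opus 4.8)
The plan is to deduce Corollary~\ref{coro-res} by combining Theorem~\ref{fair} with the structural decomposition in Theorem~\ref{par}. Recall that Theorem~\ref{fair} says $P$ is a Beurling partition if and only if the restriction of every fair tree $\ga\in\Ga^f_G$ onto $E_P$ is a spanning tree of $G_P$, i.e., if and only if $\Ga^f_G\subset\Ga^P$. So the task reduces to showing: $\Ga^f_G\subset\Ga^P$ if and only if each $G_i$ has the restriction property (every fair tree of $G$ restricts to a spanning tree of $G_i$), for $i=1,2,\dots,k_P$.

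For the forward direction, assume $P$ is a Beurling partition, so $\Ga^f_G\subset\Ga^P$. Take any fair tree $\ga\in\Ga^f_G$; then $\ga\in\Ga^P$, and by the second claim of Theorem~\ref{par}(i), namely $\psi_{E_i}(\Ga^P)\subset\Ga_{G_i}$ (shown via the edge-count inequality in that proof), we get $\psi_{E_i}(\ga)\in\Ga_{G_i}$ for every $i$. Hence $\ga$ restricts to a spanning tree of each $G_i$, so each $G_i$ has the restriction property.

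For the converse, suppose each $G_i$ has the restriction property, and let $\ga\in\Ga^f_G$ be an arbitrary fair tree. We know in general that $\psi_{E_P}(\ga)$ is a connected spanning subgraph of $G_P$, so $|\psi_{E_P}(\ga)|\ge k_P-1$. By hypothesis $\psi_{E_i}(\ga)$ is a spanning tree of $G_i$, so $|\psi_{E_i}(\ga)|=|V_i|-1$ for each $i$. Since $\{E_1,\dots,E_{k_P},E_P\}$ partitions $E$, we have
\[
|V|-1=|\ga|=\sum_{i=1}^{k_P}|\psi_{E_i}(\ga)|+|\psi_{E_P}(\ga)|\ge\sum_{i=1}^{k_P}(|V_i|-1)+(k_P-1)=|V|-1,
\]
forcing equality, hence $|\psi_{E_P}(\ga)|=k_P-1$. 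A connected spanning subgraph of $G_P$ with exactly $k_P-1$ edges is a spanning tree, so $\psi_{E_P}(\ga)\in\Ga_{G_P}$, i.e., $\ga\in\Ga^P$. Thus $\Ga^f_G\subset\Ga^P$, and by Theorem~\ref{fair}, $P$ is a Beurling partition.

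This argument is essentially bookkeeping with edge counts, so I do not expect a genuine obstacle; the one point to state carefully is that we are always free to use the general fact (recalled at the start of the proof of Theorem~\ref{fair}) that any spanning tree of $G$ restricts on $E_P$ to a connected spanning subgraph of $G_P$ and on $E_i$ to a spanning forest of $G_i$ — these are exactly the inequalities that make the telescoping sum tight. No appeal to Theorem~\ref{par}(ii) is needed; only the inclusions from part~(i), or equivalently just the edge-count estimate inside its proof.
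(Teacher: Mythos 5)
Your proposal is correct and follows essentially the same route as the paper: Theorem \ref{fair} to reduce to $\Ga^f_G\subset\Ga^P$, the inclusion $\psi_{E_i}(\Ga^P)\subset\Ga_{G_i}$ from Theorem \ref{par}(i) for the forward direction, and the same telescoping edge-count argument (with the standard fact that $\psi_{E_P}(\ga)$ is a connected spanning subgraph of $G_P$) for the converse. No gaps.
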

\begin{proof}

Let $\Gamma^P$ be the set of all spanning trees of $G$ whose restriction onto  $E_P$ is a spanning tree of the shrunk graph $G_P$.  By Theorem \ref{par}, we have  $\psi_{E_i}(\Gamma^P) = \Gamma_{G_i}$  for all  $i=1,2,\dots,k_P$ and   $  \psi_{E_P}(\Gamma^P)= \Gamma_{G_P}. $

Assume that  $P$ is a Beurling  partition. By Theorem \ref{fair}, we have $\Ga^f_G \subset \Ga^P$. Therefore, \[\psi_{E_i}(\Gamma^f_G) \subset \psi_{E_i}(\Gamma^P)= \Gamma_{G_i} \quad \text{ for all } i=1,2,\dots,k_P.\]  In other words,  $G_i$ has the restriction property for  $i=1,2,\dots,k_P$.

Conversely, assume that $G_i$ has the restriction property for  $i=1,2,\dots,k_P$. Then \[\psi_{E_i}(\Gamma^f_G) \subset  \Gamma_{G_i} = \psi_{E_i}(\Gamma^P) \quad \text{ for all } i=1,2,\dots,k_P.\]
 Let $\gamma$ be a fair tree, we have $|\psi_{E_i}(\gamma)| = |V_i|-1$. Then
\begin{align*}
|V|-1= |\gamma| &= |\psi_{E_1}(\gamma)| + | \psi_{E_2}(\gamma)| + ... + |\psi_{E_{k_{P}}}(\gamma)| +| \psi_{E_P}(\gamma)|\\
& = (|V_1|-1|)+(|V_2|-1|)	+... +(|V_{k_{P}}|-1|)+ |\psi_{E_P}(\gamma)|\\
& = |V| -k_P + |\psi_{E_P}(\gamma)|.
\end{align*}
Therefore, $ |\psi_{E_P}(\gamma)| = k_{P}-1$. Hence, $\Ga^f_G \subset \Ga^P$. In other words, $P$ is a Beurling  partition.

\end{proof}
Given $A \subset E$, let $\psi_A$ be the {\it restriction operator}, 
\begin{align*}
	\psi_A: 2^E  & \rightarrow 2^A\\
	\ga \subseteq E & \mapsto \ga \cap A.
\end{align*}
Then, for each  $A \subset E$, $\psi_A$ induces a family of objects $ \psi_{A}(\Ga) = \lbrace \ga\cap A: \ga \in \Ga \rbrace.$

\begin{definition}\label{def:divides-gamma}
	Let $\left\{ E_1,E_2,\dots,E_k \right\} $ be a partition of the edge set $E$. For each $i = 1, \dots,k$, we define an induced family of objects $\Ga_i :=  \psi_{E_i}(\Ga)$.
	We say that a partition $\left\{ E_1,E_2,\dots,E_k \right\} $ of the edge set $E$ {\it divides $\Ga$}, if 
	$\Ga$ coincides with the concatenation
	\begin{equation*}
		\Ga_1 \oplus \Ga_2 \oplus \dots \oplus \Ga_k := \left\{ \bigcup^{k}_{i=1}\ga_i : \ga_i \in  \Ga_i, i =1,2,\dots,k \right\}.
	\end{equation*}
\end{definition}
Given a partition $\left\{ E_1,E_2,\dots,E_k \right\}$ that divides $\Ga$ and a pmf $\mu \in \cP(\Ga)$. For each $i = 1, \dots,k$, define the {\it marginal} $\mu_i \in \cP(\Ga_i)$ as follows,
\[\mu_{i}(\zeta) :=  \sum \left\{   \mu(\zeta) : \ga \in \Ga, \psi_i(\ga)= \zeta \right\} \quad \forall \zeta \in \Ga_i.\]
On the other hand, given measures $\nu_i \in \cP ( \Ga_i)$ for $i =1,2,\dots,k$, define their {\it product measure} in $\cP(\Ga)$ as follows, \[ \left( \nu_1  \oplus \nu_2 \oplus \dots \oplus \nu_k \right) (\ga)  := \nu_1(\zeta_1) \nu_2(\zeta_2) \dots \nu_k(\zeta_k), \]
for all $ \displaystyle \ga = \bigcup^{k}_{i=1}\zeta_i$ where $ \zeta_i \in \Ga_i$, $i =1,2,\dots,k.$

The following theorem gives ways to decompose the graph $G$ and split the $\MEO$ problem of $G$ into $\MEO$ problems for smaller graphs using Beurling partitions.
\begin{theorem}\label{serialmod}
	Let $G=(V,E)$ be a connected graph. Let $\Ga_G$ be the spanning tree family of $G$. Let $\Phi$ be the family of all feasible partitions $P$ of $G$, with usage vectors defined as in (\ref{usage2}).  Let $\eta^*$ be the optimal density for $\Mod_{2}(\Phi)$. Let $P= \left\{ V_1,V_2,...,V_{k_P} \right\}$ be a Beurling  partition of $G$.  Let $G_i = (V_i,E_i)$ be the subgraph induced by $V_i$ for $i=1,2,\dots,k_P$ and let $G_P$ be the shrunk graph of the partition $P$. Then, the minimum expected overlap problem (\ref{meo}) splits as follows:
	\bi
	\item[ (i)] \begin{equation}
		\MEO (\Ga_G) = \MEO (\Ga_{G_1}) + \MEO (\Ga_{G_2}) +\dots + \MEO (\Ga_{G_{k_P}})+ \MEO (\Ga_{G_P});
	\end{equation}
	\item[ (ii)] A pmf $\mu \in \cP(\Ga_G)$  is optimal for $\MEO(\Ga_G)$ if and only if its marginal pmfs $\mu_i \in \cP(\Ga_{G_i})$, $i=1,2,\dots,k_P$ are optimal for 
	$\MEO(\Ga_{G_i})$ respectively and its marginal pmf $\mu_P \in \cP(\Ga_{G_P})$ is  optimal for $\MEO(\Ga_{G_P})$ (where we have identified $E_P$ with $E(G_P)$);
	\item[ (iii)] Conversely, given  pmfs $\nu_i \in \cP(\Ga_{G_i})$ that are optimal for $\MEO(\Ga_{G_i})$  for $i=1,2,\dots,k_P$ and given a pmf $\nu_P$ that is optimal for  $\MEO(\Ga_{G_P})$, then $\nu_1 \oplus \nu_2 \oplus + \dots \oplus \nu_{k_P} \oplus \nu_P $ is an optimal pmf in $\cP(\Ga_G) $ for $\MEO(\Ga_G)$; 
	\item[ (iv)] The restriction of $\eta^*$ onto $E_i$ is optimal for $\Mod_2(\widehat{\Ga_{G_i}}) $ for $i = 1,2,\dots,k_P$ and the  restriction of $\eta^*$ onto $E_P$  is optimal for $\Mod_2(\widehat{\Ga_{G_P}})$. 
	\ei
\end{theorem}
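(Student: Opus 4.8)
The plan is to push everything onto the structured family $\Gamma^P=\Ga_{G_1}\oplus\cdots\oplus\Ga_{G_{k_P}}\oplus\Ga_{G_P}$ of Theorem~\ref{par}. The step that carries all the weight -- and the only delicate one -- is that the Beurling hypothesis confines every optimal law to $\Gamma^P$: any optimal pmf $\mu^*$ for $\MEO(\Ga_G)$ has support inside $\Ga^f_G$ by definition of fair trees, and since $P$ is a Beurling partition, Theorem~\ref{fair} gives $\Ga^f_G\subset\Gamma^P$, so $\mu^*\in\cP(\Gamma^P)$. Since $\cP(\Gamma^P)\subset\cP(\Ga_G)$ trivially, this already shows $\MEO(\Ga_G)=\min_{\mu\in\cP(\Gamma^P)}\bE_\mu|\underline{\ga}\cap\underline{\ga'}|$, so from now on only laws on $\Gamma^P$ matter.

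Next I would use that $\{E_1,\dots,E_{k_P},E_P\}$ is a partition of $E$, so for any two subgraphs $\ga,\ga'$ of $G$ one has the identity
\[
|\ga\cap\ga'|=\sum_{i=1}^{k_P}\bigl|\psi_{E_i}(\ga)\cap\psi_{E_i}(\ga')\bigr|+\bigl|\psi_{E_P}(\ga)\cap\psi_{E_P}(\ga')\bigr|.
\]
Fix $\mu\in\cP(\Gamma^P)$ and take i.i.d.\ copies $\underline{\ga},\underline{\ga'}\sim\mu$. Then $\bigl(\psi_{E_i}(\underline{\ga}),\psi_{E_i}(\underline{\ga'})\bigr)$ is a pair of i.i.d.\ copies of the marginal $\mu_i$, which lies in $\cP(\Ga_{G_i})$ because $\psi_{E_i}(\Gamma^P)=\Ga_{G_i}$ by Theorem~\ref{par}(i); likewise $\mu_P\in\cP(\Ga_{G_P})$. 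Taking expectations and writing $\bE_{\mu_i}|\underline{\zeta}\cap\underline{\zeta'}|$ for the expected overlap of i.i.d.\ copies of a marginal, we get
\[
\bE_\mu|\underline{\ga}\cap\underline{\ga'}|=\sum_{i=1}^{k_P}\bE_{\mu_i}|\underline{\zeta}\cap\underline{\zeta'}|+\bE_{\mu_P}|\underline{\zeta}\cap\underline{\zeta'}|\geq\sum_{i=1}^{k_P}\MEO(\Ga_{G_i})+\MEO(\Ga_{G_P}),
\]
and minimizing over $\mu\in\cP(\Gamma^P)$ yields the ``$\geq$'' half of (i).

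For the reverse inequality I would pick optimal $\nu_i\in\cP(\Ga_{G_i})$ and $\nu_P\in\cP(\Ga_{G_P})$ and form the product measure $\nu=\nu_1\oplus\cdots\oplus\nu_{k_P}\oplus\nu_P$, which lies in $\cP(\Gamma^P)$ because the edge partition divides $\Gamma^P$ by Theorem~\ref{par}(ii); under $\nu$ the blocks $\psi_{E_1}(\underline{\ga}),\dots,\psi_{E_{k_P}}(\underline{\ga}),\psi_{E_P}(\underline{\ga})$ are independent with exactly the prescribed marginals, so the displayed identity evaluates to $\sum_i\MEO(\Ga_{G_i})+\MEO(\Ga_{G_P})$. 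This proves (i) and exhibits $\nu$ as optimal, which is (iii). Part (ii) then drops out of the equality case: an optimal $\mu\in\cP(\Ga_G)$ is in $\cP(\Gamma^P)$ by the first step, so the chain of (in)equalities above is tight, forcing $\bE_{\mu_i}|\underline{\zeta}\cap\underline{\zeta'}|=\MEO(\Ga_{G_i})$ for every $i$ and $\bE_{\mu_P}|\underline{\zeta}\cap\underline{\zeta'}|=\MEO(\Ga_{G_P})$; conversely, if the marginals of some $\mu\in\cP(\Ga_G)$ are genuine pmfs on $\Ga_{G_i}$ and $\Ga_{G_P}$ and are all optimal, then $\mu$ is automatically supported on $\Gamma^P$ (a spanning tree of $G$ whose $E_P$-restriction is not a tree would give $\mu_P$ support outside $\Ga_{G_P}$), and the same identity makes $\bE_\mu|\underline{\ga}\cap\underline{\ga'}|=\MEO(\Ga_G)$.

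Finally, for (iv), since $\Phi$ is a Fulkerson dual family of $\Ga$ (Theorem~\ref{theo:chopra}), $\Adm(\Phi)=\Adm(\widehat{\Ga})$, so $\eta^*$ is also the optimal density for $\Mod_2(\widehat{\Ga})$, which by Theorem~\ref{meomod} equals $\cN^T\mu^*$ for an optimal pmf $\mu^*$; thus for $e\in E_i$, $\eta^*(e)=\bP_{\mu^*}(e\in\underline{\ga})=\bP_{\mu^*_i}(e\in\underline{\zeta})$ since $e\in\underline{\ga}\iff e\in\psi_{E_i}(\underline{\ga})$, and by (ii) the marginal $\mu^*_i$ is optimal for $\MEO(\Ga_{G_i})$, so Theorem~\ref{meomod} applied to $G_i$ identifies this edge-usage vector with the unique optimal density for $\Mod_2(\widehat{\Ga_{G_i}})$; the same argument on $E_P$ settles $G_P$. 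As indicated, the whole argument hinges on the first step, where the Beurling property is precisely what makes the reduction to $\Gamma^P$ available; everything afterward is bookkeeping about how independence and the edge partition interact with the overlap functional, the one care point being the ``if'' direction of (ii).
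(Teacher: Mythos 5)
Your proof is correct and takes essentially the same route as the paper: the Beurling hypothesis plus Theorem \ref{fair} confines every optimal pmf to $\Gamma^P$, and Theorem \ref{par} identifies $\Gamma^P$ with $\Ga_{G_1}\oplus\dots\oplus\Ga_{G_{k_P}}\oplus\Ga_{G_P}$, reducing everything to the serial rule for the edge partition $\{E_1,\dots,E_{k_P},E_P\}$. The only difference is that where the paper simply cites the serial-rule theorem of \cite{pietrofairest} for parts (i)--(iv), you reprove that step in a self-contained way (overlap identity, marginals, product measures, and Theorem \ref{meomod} for (iv)), which is a valid substitute.
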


\begin{proof}[Proof for Theorem \ref{serialmod}]
Let $\Gamma^P$ be the set of all spanning trees of $G$ whose restriction onto  $E_P$ is a spanning tree of the shrunk graph $G_P$.  By Theorem \ref{par}, we have

\[\Ga^P = \Ga_{G_1} \oplus \Ga_{G_2} \oplus \dots \Ga_{G_{k_P}} \oplus \Ga_{G_P}.\]
Since $P$ is a Beurling partition, by Theorem \ref{fair}, we have $\Ga^f_G \subset \Ga^P$. Hence, any optimal pmf $\mu^* \in \cP(\Ga_G)$ for $\MEO(\Ga_G)$ must necessarily lie in $\cP(\Ga^P)$. Therefore, $\MEO(\Ga_G) = \MEO(\Ga^P)$. The rest of the proof follows by \cite[Theorem 3.3]{pietrofairest}.
\end{proof}

\begin{corollary} Let $G=(V,E)$ be a connected graph. Let $\Phi$ be the family of all feasible partitions $P$ of $G$, with usage vectors defined as in (\ref{usage2}).  Let $\eta^*$ be the optimal density for $\Mod_{2}(\Phi)$. Given a connected vertex-induced  subgraph $H = (V_H,E_H)$ of $G$, if $\eta^*(E_H)= |V_H|-1$, then $\eta^*_{\vert_{E_H}}$ is the optimal density for $\Mod_2(\widehat{\Ga_H})$. 
\end{corollary}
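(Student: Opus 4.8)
The plan is to realize $H$ as one block of a Beurling partition of $G$ and then read off the conclusion from part (iv) of Theorem~\ref{serialmod}. We may assume $V_H\neq V$: if $V_H=V$ then, since $H$ is vertex-induced, $E_H=E$, so $\Ga_H=\Ga_G$ and $\eta^*_{\vert_{E_H}}=\eta^*$; and $\eta^*$ is optimal for $\Mod_2(\widehat{\Ga_G})$ because $\Phi$ is a Fulkerson dual family of $\Ga_G$ (Theorem~\ref{theo:chopra}), so $\Adm(\Phi)=\BL(\Adm(\Ga_G))=\Adm(\widehat{\Ga_G})$ and hence $\Mod_2(\Phi)$ and $\Mod_2(\widehat{\Ga_G})$ have the same (unique) optimal density.

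So suppose $V_H\subsetneq V$ and consider the partition
\[P:=\{V_H\}\cup\{\,\{v\}:v\in V\setminus V_H\,\}\]
of $V$. Each block induces a connected subgraph ($V_H$ by hypothesis, the singletons trivially), and $k_P=1+|V\setminus V_H|\ge 2$, so $P$ is a feasible partition, i.e. $P\in\Phi$. Its cut set is $E_P=E\setminus E_H$, because the only edges whose endpoints lie in a common block of $P$ are the edges of $H$. Using $\eta^*(E)=|V|-1$ (the remark following Definition~\ref{def:beurling-partition}) together with the hypothesis $\eta^*(E_H)=|V_H|-1$, we obtain
\[\eta^*(E_P)=\eta^*(E)-\eta^*(E_H)=(|V|-1)-(|V_H|-1)=|V|-|V_H|=k_P-1 ,\]
so by Definition~\ref{def:beurling-partition}, $P$ is a Beurling partition of $G$.

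Finally, apply Theorem~\ref{serialmod}(iv) to the Beurling partition $P$. One of the blocks $V_i$ of $P$ equals $V_H$, and its induced subgraph $G_i$ is exactly $H=(V_H,E_H)$; part (iv) then asserts that the restriction of $\eta^*$ onto $E_H$ is optimal for $\Mod_2(\widehat{\Ga_H})$, and since the optimal density of a $2$-modulus problem is unique, $\eta^*_{\vert_{E_H}}$ is \emph{the} optimal density for $\Mod_2(\widehat{\Ga_H})$, as claimed. I do not expect a genuine obstacle here: the only point to watch is that the remaining blocks of $P$ are singletons carrying no edges, but this is harmless since Theorem~\ref{serialmod} only asks that each block induce a connected subgraph and we use its conclusion solely for the block $V_H$. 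The one non-mechanical observation is that padding $H$ out with singleton blocks produces a Beurling partition, which is precisely what lets us invoke the already-established splitting theorem rather than redo its proof.
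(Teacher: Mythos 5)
Your proposal is correct and follows essentially the same route as the paper: form the partition $\{V_H\}\cup\{\{v\}:v\notin V_H\}$, verify it is a Beurling partition via $\eta^*(E\setminus E_H)=k_P-1$, and invoke Theorem~\ref{serialmod}(iv). Your separate treatment of the degenerate case $V_H=V$ (where that partition would fail $k_P\ge 2$) is a small refinement the paper omits, but otherwise the arguments coincide.
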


\begin{proof}
Consider the partition $P_H := \left\{ V_H \right\} \cup  \left\{  \left\{ v \right\}: v \notin V_H  \right\}$. Then, $P_H$ is feasible since $H$ is connected. Moreover, $k_{P_H}=|V\setminus V_H|+1$ and  $E_{P_H}=E\setminus E_H$. Assume that $\eta^*(E_H)= |V_H|-1$, we have
\[\eta^*(E \setminus E_H)= \eta^*(E) - \eta^*(E_H) = (|V|-1)- (|V_H|-1) = |V|- |V_H| = k_{P_H}-1. \]
Hence, by Definition \ref{def:beurling-partition}, $P_H$ is a Beurling partition. By Theorem \ref{serialmod} (iv), $\eta^*_{\vert_{E_H}}$ is the optimal density for $\Mod_2(\widehat{\Ga_H})$. 
\end{proof}

Theorem \ref{serialmod} gives us  a way to decompose the graph $G$ into a set of vertex-induced subgraphs and a shrunk graph using Beurling partitions. 

Let $P= \left\{ V_1,V_2,...,V_{k_P} \right\}$ be a Beurling  partition of $G$. Let $G_i = (V_i,E_i)$ be the subgraph induced by $V_i$ for $i=1,2,\dots,k_P$. Then, the graph $G$ can be decomposed into $G_i$ for $i=1,2,\dots,k_P$ and the shrunk graph $G_P$, in the sense that  we decompose the edge set $E$ into the partition $\lbrace E_1,E_2,...,E_{k_{P}},E_P \rbrace $ and preserve the optimal edge usage $\eta^*(e)$ by Theorem \ref{serialmod}.

\section{Combinatorial properties and Beurling partitions}

Among all Beurling partitions, there are two that are related to combinatorial notions. The first one represents the maximum denseness of the graph $G$ and the second one represents the strength of the graph $G$.

\subsection{The maximum denseness of a graph}\label{sec4}

First, we recall the concept of homogeneous core introduced in \cite{pietrofairest}.
\begin{definition}\label{def:homogeneous-core}
A connected subgraph $H$ of $G$  is called a {\it homogeneous core} of $G$, if it satisfies the following properties:
\bi
\item[ (i)] $H$ has at least one edge.
\item[ (ii)]$H$ has the restriction property, see Definition \ref{def:restriction-prop}.
\item[ (iii)]$H$ is a vertex-induced subgraph of $G$.
\item[ (iv)] $H$ is itself a homogeneous graph, see Definition \ref{def:homogeneous}.
\ei
\end{definition}
It was shown, in \cite[Theorem 5.8]{pietrofairest}, that any maximum denseness subgraph of $G$ is a homogeneous core. Let $\eta^*$  be the optimal density for $\Mod_2(\Phi)$. Let 
\begin{equation}\label{eq:e-min}
E_{min} :=\left\{ e\in E: \eta^*(e)=\min\limits_{e' \in E}\eta^*(e')=:\eta^*_{min}\right\}.
\end{equation}
Let $H_{ min}$ be the subgraph induced by $E_{ min}$. Then, any connected component $H$ of $H_{ min}$ is a homogeneous core  \cite[Theorem 5.2]{pietrofairest} and is a maximum denseness subgraph of $G$  \cite[Corollary 5.10]{pietrofairest}. Moreover,

\begin{equation}\label{etamin2}
\eta^*_{min} =\frac{1}{\theta(H)}= \frac{1}{D(G)}.
\end{equation}

We reinterpret these results in terms of Beurling partitions in the following theorem.

\begin{theorem}\label{etamin}
Let $G=(V,E)$ be a connected multigraph containing at least one edge. Let $\eta^*$  be the optimal density for $\Mod_2(\Phi)$. Let $E_{ min}$ be defined as in (\ref{eq:e-min}). Then,  there exists a Beurling partition $P_{ min}$ such that $E_{P_{ min}} = E \setminus E_{ min}$.
\end{theorem}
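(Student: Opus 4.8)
The goal is to produce a Beurling partition $P_{min}$ whose cut set is exactly $E \setminus E_{min}$, equivalently whose complement (the non-cut edges) is $E_{min}$. The natural candidate is to take $P_{min}$ to be the partition of $V$ whose blocks are the vertex sets of the connected components of $H_{min}$, together with singletons for all vertices not touched by any edge of $E_{min}$. I would first argue this is a \emph{feasible} partition: each connected component $H$ of $H_{min}$ is, by construction, a connected subgraph, and it is vertex-induced in $G$ because $H$ is a homogeneous core (property (iii) of Definition \ref{def:homogeneous-core}, applied via \cite[Theorem 5.2]{pietrofairest}); singleton blocks are trivially connected and vertex-induced. Hence $P_{min} \in \Phi$.

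The second step is to identify the cut set $E_{P_{min}}$. An edge $e = \{u,v\}$ lies in $E_{P_{min}}$ iff $u,v$ are in different blocks of $P_{min}$. Because each $H$ is a vertex-induced subgraph, every edge of $G$ with both endpoints in a single component $H$ already belongs to $H$, hence to $E_{min}$; and an edge in $E_{min}$ has, by definition of $H_{min}$, both endpoints in the same component of $H_{min}$, so it is not a cut edge. Conversely any edge not in $E_{min}$ cannot have both endpoints in one block (if it did, that block is a component $H$ with $H$ vertex-induced, forcing $e\in H\subseteq E_{min}$, contradiction), so it is a cut edge. This gives exactly $E_{P_{min}} = E \setminus E_{min}$. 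I should also note $k_{P_{min}} \geq 2$: since $G$ has at least one edge and $G$ is connected, if $E_{min}=E$ then $G$ itself is homogeneous and the trivial partition works; otherwise $E\setminus E_{min}\neq\varnothing$ and there genuinely are at least two blocks.

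The third and main step is to verify the Beurling condition $\eta^*(E_{P_{min}}) = k_{P_{min}} - 1$. Using $\eta^*(E) = |V|-1$ (the trivial partition is Beurling) and $E_{P_{min}} = E\setminus E_{min}$, this reduces to showing $\eta^*(E_{min}) = |V| - k_{P_{min}}$, i.e. that the restriction of $\eta^*$ to the edges inside the components of $H_{min}$ behaves like a spanning-tree count. Writing $V_1,\dots,V_m$ for the vertex sets of the components of $H_{min}$ and noting $k_{P_{min}} = m + (|V| - \sum_j |V_j|)$, the required identity becomes $\sum_{j=1}^m \eta^*(E(H_j)) = \sum_{j=1}^m (|V_j|-1)$. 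For each component $H_j$, since $H_j$ is a maximum denseness subgraph \cite[Corollary 5.10]{pietrofairest} with $\theta(H_j) = D(G)$ and every edge of $H_j$ carries $\eta^* = \eta^*_{min} = 1/D(G)$ by (\ref{etamin2}), I compute $\eta^*(E(H_j)) = |E(H_j)| \cdot \eta^*_{min} = |E(H_j)|/\theta(H_j) = |V_j| - 1$, by the definition (\ref{denseness}) of denseness. Summing over $j$ gives exactly what is needed, so $P_{min}$ is a Beurling partition.

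\textbf{Anticipated obstacle.} The one delicate point is making sure the building blocks genuinely are vertex-induced connected subgraphs of $G$ — this is what both the feasibility of $P_{min}$ and the identification of its cut set rest on — and this is precisely where I need to invoke \cite[Theorem 5.2]{pietrofairest} (each component of $H_{min}$ is a homogeneous core) rather than trying to re-derive it. Everything else is bookkeeping with the identity $\eta^*(E)=|V|-1$ and the value $\eta^*_{min}=1/D(G)$ from (\ref{etamin2}), plus the edge-count identity for maximum denseness subgraphs.
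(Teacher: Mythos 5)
Your construction is exactly the paper's: the blocks are the vertex sets of the connected components of $H_{min}$ plus singletons, feasibility and the identity $E_{P_{min}}=E\setminus E_{min}$ both resting on \cite[Theorem 5.2]{pietrofairest} (each component is a homogeneous core, hence connected and vertex-induced). Where you diverge is in the verification that $P_{min}$ is Beurling. The paper does this combinatorially: since each component has the restriction property (property (ii) of a homogeneous core), Corollary \ref{coro-res} applies directly and no computation with $\eta^*$ is needed. You instead verify the defining identity $\eta^*(E_{P_{min}})=k_{P_{min}}-1$ numerically, using $\eta^*(E)=|V|-1$, the fact that each component is a maximum denseness subgraph with $\theta(H_j)=D(G)$ \cite[Corollary 5.10]{pietrofairest}, and $\eta^*_{min}=1/D(G)$ from (\ref{etamin2}), so that $\eta^*(E(H_j))=|E(H_j)|/D(G)=|V_j|-1$. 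Both routes are correct and lean on the same external results from \cite{pietrofairest}; the paper's argument is slightly more economical (it never needs Corollary 5.10 or (\ref{etamin2})) and stays at the level of spanning-tree restrictions, whereas yours is a self-contained check against Definition \ref{def:beurling-partition} that makes the quantitative mechanism ($1/D(G)$ times the edge count of a densest subgraph giving $|V_j|-1$) explicit.

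One small caveat: your patch for the degenerate case $E_{min}=E$ does not quite work as stated. If $G$ is homogeneous, the required cut set is $E\setminus E_{min}=\varnothing$, whereas the trivial partition (all singletons, in this paper's usage) has cut set $E$; the only partition with empty cut set is the one-block partition $\{V\}$, which fails the $k_P\geq 2$ requirement in the definition of feasibility. This is a boundary issue the paper's proof also silently inherits (its construction yields $\{V\}$ in that case), so it does not distinguish your argument from the paper's, but the sentence claiming the trivial partition handles it should be dropped or reworded.
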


\begin{proof}
Let $H_{ min} = (V_{H_{ min}},E_{ min})$ be the subgraph induced by $E_{ min}$. Assume that $H_{ min}$ has $h$ connected components $H_i =  (V_{H_i},E_{H_i}) $, $i = 1,2,\dots,h.$ 
  By \cite[ Theorem 5.2]{pietrofairest}, every $H_i$ is a homogeneous core.
  
 Consider the partition $ P_{min} :=\left\{ V_{H_1}, V_{H_2},\dots  V_{H_h} \right\} \cup \left\{ \left\{ v\right\} : v \notin  V_{H_{ min}} \right\}$ of the node set $V$. The partition $ P_{ min}$ is feasible since every $H_i$ is connected. $E_{P_{ min}} = E \setminus E_{ min}$ since every $H_i$ is a vertex-induced. Finally, since every $H_i$ has the restriction property, by Corollary   \ref{coro-res}, $P_{ min}$ is a Beurling partition.
\end{proof}

\subsection{The strength of a graph}\label{sec5}
The second important Beurling partition has been described in  \cite[Theorem 4.10]{pietrofairest}. We restate that theorem in terms of Beurling partitions as follows.
\begin{theorem}\label{etamax}
Let $G=(V,E)$ be a connected graph. Let $\eta^*$  be the optimal density for $\Mod_2(\Phi)$. Let 
\begin{equation}\label{eq:e-max}
E_{ max} :=\left\{ e\in E: \eta^*(e)=\max\limits_{e' \in E}\eta^*(e')=:\eta^*_{max}\right\}.
\end{equation}  Then, there exists a Beurling partition $P_{ max}$ such that $E_{P_{ max}} = E_{ max}$. Moreover, $P_{ max}$ is a critical partition for the strength problem, see Definition \ref{def:strength-pb}, and 
\begin{equation}\label{etamax2}
\eta^*_{max} = \frac{1}{w(P_{ max})} = \frac{1}{S(G)},
\end{equation}
where $w(P_{max})$ is the weight defined in (\ref{eq:weight-partition}).
\end{theorem}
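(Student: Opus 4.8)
The plan is to mimic the structure of the proof of Theorem \ref{etamin}, but now working with the set $E_{max}$ of edges carrying the maximal value of $\eta^*$ rather than the minimal value, and to invoke the already-cited result \cite[Theorem 4.10]{pietrofairest} to supply the combinatorial content. First I would recall from \cite[Theorem 4.10]{pietrofairest} that the edge set $E_{max}$ is exactly the cut set $E_P$ of some feasible partition $P=P_{max}$, that this $P_{max}$ is critical for the strength problem, and that $\eta^*_{max}=1/w(P_{max})=1/S(G)$. Thus the nontrivial new assertion to verify is that this $P_{max}$ is in fact a Beurling partition in the sense of Definition \ref{def:beurling-partition}, i.e.\ that $\eta^*(E_{P_{max}})=k_{P_{max}}-1$, or equivalently that $\widetilde{\cN}(P_{max},\cdot)\eta^*=1$.

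The key computation is the following chain: since $E_{P_{max}}=E_{max}$, every edge in the cut set carries value exactly $\eta^*_{max}$, so
\[
\eta^*(E_{P_{max}}) = |E_{P_{max}}|\,\eta^*_{max} = |E_{P_{max}}|\cdot\frac{1}{w(P_{max})} = |E_{P_{max}}|\cdot\frac{k_{P_{max}}-1}{|E_{P_{max}}|} = k_{P_{max}}-1,
\]
using the definition $w(P)=|E_P|/(k_P-1)$ from (\ref{eq:weight-partition}) and the identity $\eta^*_{max}=1/w(P_{max})$ from \cite[Theorem 4.10]{pietrofairest}. By Definition \ref{def:beurling-partition} this is precisely the statement that $P_{max}$ is a Beurling partition. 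The remaining two clauses of the theorem—that $P_{max}$ is a critical partition and that (\ref{etamax2}) holds—are then just restatements of what \cite[Theorem 4.10]{pietrofairest} already provides, so nothing further is needed there.

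I expect the main (and only real) obstacle to be bookkeeping: making sure that the feasible partition produced by \cite[Theorem 4.10]{pietrofairest} genuinely has cut set equal to $E_{max}$ (and not merely contained in it), and that $w(P_{max})$ there agrees with the weight normalization $w(P)=|E_P|/(k_P-1)$ used in this paper, so that the displayed chain of equalities is legitimate. Once the normalizations are aligned, the argument is a one-line substitution. If for some reason \cite[Theorem 4.10]{pietrofairest} is stated only in the language of strength/critical partitions rather than of $\eta^*$, one would instead argue: take $P_{max}$ to be a critical partition with $S(G)=\si(E_{P_{max}})/(k_{P_{max}}-1)$; use the Fulkerson/modulus relation to identify $\eta^*$ on $E_{P_{max}}$ with the constant $1/w(P_{max})$ via the tree-packing optimum (Theorem \ref{meomod} together with the characterization $\eta_{hom}\in\Adm(\widehat{\Ga})$); and then run the same computation. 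Either way the heart of the matter is the algebraic identity $\eta^*(E_{P_{max}})=|E_{P_{max}}|/w(P_{max})=k_{P_{max}}-1$.
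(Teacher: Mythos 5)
Your proposal is correct and takes essentially the same route as the paper, which proves Theorem \ref{etamax} simply by restating \cite[Theorem 4.10]{pietrofairest} in the language of Beurling partitions. Your explicit one-line check that $\eta^*(E_{P_{max}})=|E_{P_{max}}|\,\eta^*_{max}=k_{P_{max}}-1$, so that Definition \ref{def:beurling-partition} applies, is exactly the (implicit) translation step the paper relies on, and the remaining clauses are, as you say, direct quotations of the cited result.
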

\begin{corollary}\label{shrunkhom}
Let $G=(V,E)$ be a connected graph. Let $P_{ max}$ be the Beurling partition constructed in Theorem \ref{etamax}. Then, the shrunk graph $G_{P_{ max}}$ is homogeneous.
\end{corollary}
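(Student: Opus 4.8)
The plan is to show that the shrunk graph $G_{P_{\max}}$ has constant optimal density, directly from the structural facts in Theorem \ref{etamax} and the splitting Theorem \ref{serialmod}.

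First I would invoke Theorem \ref{serialmod}(iv): since $P_{\max}$ is a Beurling partition, the restriction of $\eta^*$ onto $E_{P_{\max}}$ is the optimal density for $\Mod_2(\widehat{\Ga_{G_{P_{\max}}}})$. By Theorem \ref{etamax}, $E_{P_{\max}} = E_{\max}$, so on every edge $e \in E_{P_{\max}}$ we have $\eta^*(e) = \eta^*_{\max}$, a constant. Thus the optimal density for $\Mod_2(\widehat{\Ga_{G_{P_{\max}}}})$ is constant on the edge set of $G_{P_{\max}}$.

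Next I would translate "constant optimal density for $\Mod_2(\widehat{\Ga_H})$" into "$H$ is homogeneous" for $H = G_{P_{\max}}$. This is exactly Definition \ref{def:homogeneous}: $G_{P_{\max}}$ is homogeneous with respect to spanning tree modulus if and only if the optimal $\eta^*$ for $\Mod_2(\widehat{\Ga_{G_{P_{\max}}}})$ is constant (equivalently the optimal $\rho^*$ for $\Mod_2(\Ga_{G_{P_{\max}}})$ is constant — the two are proportional by Theorem \ref{meomod}(ii)). Since we have just shown this density is the constant $\eta^*_{\max}$, the graph $G_{P_{\max}}$ is homogeneous, which is the claim.

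The only point needing care is the identification of $E_{P_{\max}}$ with the edge set $E(G_{P_{\max}})$ and the consistency of the usage-vector conventions: Theorem \ref{serialmod}(iv) refers to $\widehat{\Ga_{G_P}}$ with spanning-tree usage vectors on $G_P$, and the shrunk graph $G_P$ by construction has edge set $E_P$, so this identification is immediate and the restriction of $\eta^*$ is literally $\eta^*_{\max}\cdot\ones_{E_{P_{\max}}}$. I do not anticipate a genuine obstacle here; the corollary is essentially a one-line consequence of Theorem \ref{etamax} (for the value of $\eta^*$ on $E_{P_{\max}}$) combined with Theorem \ref{serialmod}(iv) (for the fact that this restriction solves the shrunk graph's dual modulus problem) and Definition \ref{def:homogeneous}.
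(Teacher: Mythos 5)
Your proposal is correct and follows essentially the same route as the paper: the paper's proof likewise combines the constancy of $\eta^*$ on $E_{P_{\max}}$ (from Theorem \ref{etamax}) with Theorem \ref{serialmod}(iv) to conclude that the restriction of $\eta^*$ is the constant optimal density for $\Mod_2(\widehat{\Ga}_{G_{P_{\max}}})$, hence $G_{P_{\max}}$ is homogeneous by Definition \ref{def:homogeneous}. Your extra remark about identifying $E_{P_{\max}}$ with $E(G_{P_{\max}})$ is a harmless elaboration of what the paper leaves implicit.
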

\begin{proof}
  We have that $\eta^*$ is constant in $E_{P_{ max}}$. By Theorem \ref{serialmod}, the  restriction of $\eta^*$ onto $E_{P_{ max}}$  is optimal for $\Mod_2(\widehat{\Ga}_{G_{P_{ max}}})$. Therefore, the shrunk graph $G_{P_{ max}}$ is homogeneous.
\end{proof}

Let $P$ and $P'$ be two partitions of the node set $V$. The partition $P$ is said to be {\it finer} than  $P'$, if every element in $P'$ is a union of elements in $P$. The trivial partition is the finest partition among all partitions. We will show that $P_{max}$ is the finest critical partition for the strength problem. 

\begin{lemma}\label{finer}
Let $G= (V,E)$ be a connected graph. Let $P$ and $P'$ be two feasible partitions of the node set $V$ with the cut sets $E_P$ and $E_{P'}$, respectively. Then  $P$ is finer than  $P'$ if and only if $E_{P'} \subset E_{P}.$
\end{lemma}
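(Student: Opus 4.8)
The plan is to prove the equivalence by arguing both directions directly from the definitions of ``finer'' and ``cut set.'' Recall that $P$ is finer than $P'$ means every block $V'_j \in P'$ is a union of blocks of $P$, and that $E_P$ (resp. $E_{P'}$) is the set of edges of $G$ whose two endpoints lie in different blocks of $P$ (resp. $P'$).

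For the forward direction, suppose $P$ is finer than $P'$ and let $e = \{x,y\} \in E_{P'}$. Then $x$ and $y$ lie in different blocks of $P'$; since each block of $P'$ is a union of blocks of $P$, the blocks of $P$ containing $x$ and $y$ are contained in distinct blocks of $P'$, hence are themselves distinct, so $e \in E_P$. This gives $E_{P'} \subset E_P$.

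For the converse, suppose $E_{P'} \subset E_P$ and suppose toward a contradiction that $P$ is not finer than $P'$. Then there is a block $V'_j \in P'$ that is not a union of blocks of $P$, i.e.\ some block $V_i \in P$ meets $V'_j$ but is not contained in $V'_j$. Pick $x \in V_i \cap V'_j$ and $y \in V_i \setminus V'_j$. Here is where I would use feasibility and connectedness: since $P'$ is a feasible partition, $G(V'_j)$ is connected, but more usefully, since $P$ is feasible, $G(V_i)$ is connected, so there is a path in $G(V_i)$ from $x$ to $y$. Walking along this path, which starts inside $V'_j$ (at $x$) and ends outside $V'_j$ (at $y$), we find an edge $e$ of the path with one endpoint in $V'_j$ and the other outside $V'_j$; thus $e \in E_{P'}$. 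But both endpoints of $e$ lie in $V_i$, a single block of $P$, so $e \notin E_P$, contradicting $E_{P'} \subset E_P$. Hence $P$ is finer than $P'$.

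The main obstacle — really the only subtle point — is the converse direction: from the purely set-theoretic fact $E_{P'} \subset E_P$ one cannot conclude refinement without using that the blocks of $P$ induce connected subgraphs of $G$ (otherwise a block of $P$ could straddle two blocks of $P'$ without contributing any edge to $E_{P'}$ that is absent from $E_P$). So the feasibility hypothesis on $P$ is essential and must be invoked exactly at the step where we produce the connecting path inside $V_i$. Everything else is a routine unwinding of definitions.
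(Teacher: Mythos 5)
Your proof is correct, and the forward direction is the same routine unwinding of definitions as in the paper. The converse is where you diverge: the paper argues globally, using feasibility to identify the blocks of $P'$ with the connected components of $G\setminus E_{P'}$ and the blocks of $P$ with the connected components of $G\setminus E_P$, and then observes that deleting the additional edges $E_P\setminus E_{P'}$ only breaks each component $G'_i$ into a union of components of $G\setminus E_P$, so each $V'_i$ is a union of blocks of $P$. You instead argue locally by contradiction: if some block $V_i$ of $P$ straddles a block $V'_j$ of $P'$, connectivity of $G(V_i)$ produces a path inside $V_i$ crossing the boundary of $V'_j$, hence an edge of $E_{P'}$ whose endpoints both lie in $V_i$, i.e.\ an edge of $E_{P'}\setminus E_P$. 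Both hinge on the same essential fact --- feasibility makes the induced subgraphs $G(V_i)$ connected --- and you correctly flag that this is the only nontrivial ingredient. A small bonus of your version is that the converse uses only the feasibility of $P$ (the paper's component argument invokes feasibility of both $P$ and $P'$), though since the lemma assumes both, this does not change the statement; the paper's formulation, on the other hand, makes the ``components of $G\setminus E_P$'' picture explicit, which is reused implicitly elsewhere in the paper.
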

\begin{proof}
Suppose $P= \left\{ V_1,V_2,...,V_{k_P} \right\}$ and  $P'= \left\{ V'_1,V'_2,...,V'_{k_{P'}} \right\}$. Assume that $P$ is finer than  $P'$, then each $V'_i$ is a union of elements in $P$ for $i =1,2,\dots,k_P.$ Let $e \in E_{P'}$, then $e$ connects two vertices which belong to two different $V'_i$. Since each $V'_i$ is a union of elements in $P$, the endpoints of $e$ belong to two different $V_i$. Therefore, $E_{P'} \subset E_{P}.$

Assume that $E_{P'} \subset E_{P}.$  Let $G'_i = (V'_i,E'_i)$ be the subgraph induced by $V'_i$ for $i=1,2,\dots,k_{P'}$ and let $G_i = (V_i,E_i)$ be the subgraph induced by $V_i$ for $i=1,2,\dots,k_P$. First, we remove all the edges in $E_{P'}$ from $G$, then we are left with the graph $G\setminus E_{P'}$, the graph $G\setminus E_{P'}$ is the disjoint union of all the $G'_i$ and each $G'_i$ is a connected component of $G\setminus E_{P'}$.
 Next, we remove all the edges in $E_P \setminus E_{P'}$ from $G\setminus E_{P'}$. Then, we are left with the graph $G\setminus E_{P}$. 
 After removing the edges in  $E_P \setminus E_{P'}$, each $G'_i$ breaks into a disjoint union of connected components of $G\setminus E_{P}$. Notice that  $G\setminus E_{P}$ is  the disjoint union of all $G_i$ and each $G_i$ is a connected component of $G\setminus E_{P}$. Therefore, each $V'_i$ is a union of elements in $P$. In other words,  $P$ is finer than  $P'$.

\end{proof}
\begin{theorem}\label{finest} Let $G=(V,E)$ be a connected graph. Let $P_{ max}$ be the Beurling  partition constructed in Theorem \ref{etamax}. If $P$ is a critical partition of $G$, then $P_{ max}$ is finer than $P$. We say that $P_{ max}$ is the {\it finest critical partition}.
\end{theorem}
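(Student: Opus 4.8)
The plan is to reduce the statement to a containment of cut sets via Lemma \ref{finer}, and then to exploit admissibility of $\eta^*$ together with the identity $\eta^*_{max}=1/S(G)$ from Theorem \ref{etamax}. By Lemma \ref{finer}, the assertion ``$P_{max}$ is finer than $P$'' is equivalent to $E_P\subset E_{P_{max}}$, and by Theorem \ref{etamax} we have $E_{P_{max}}=E_{max}=\{e\in E:\eta^*(e)=\eta^*_{max}\}$ with $\eta^*_{max}=1/S(G)$. So it suffices to show that every edge in the cut set $E_P$ of a critical partition $P$ attains the maximal value $\eta^*_{max}$.

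First I would record the two numerical facts I need about a critical partition $P=\{V_1,\dots,V_{k_P}\}$. On the one hand, criticality means $w(P)=\frac{|E_P|}{k_P-1}=S(G)$, so $|E_P|=S(G)(k_P-1)$. On the other hand, $\eta^*$ is the optimal density for $\Mod_2(\Phi)$, hence $\eta^*\in\Adm(\Phi)$, which for the feasible partition $P$ with usage vector $\widetilde{\cN}(P,\cdot)^T=\frac1{k_P-1}\ones_{E_P}$ gives $\widetilde{\cN}(P,\cdot)\eta^*\ge 1$, i.e. $\eta^*(E_P)\ge k_P-1$.

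Now I combine these with the pointwise bound $\eta^*(e)\le\eta^*_{max}=1/S(G)$ valid for all $e\in E$:
\[
k_P-1\ \le\ \eta^*(E_P)\ =\ \sum_{e\in E_P}\eta^*(e)\ \le\ |E_P|\cdot\frac{1}{S(G)}\ =\ S(G)(k_P-1)\cdot\frac{1}{S(G)}\ =\ k_P-1.
\]
Thus equality holds throughout, and in particular $\sum_{e\in E_P}\eta^*(e)=|E_P|\,\eta^*_{max}$ with each summand at most $\eta^*_{max}$; this forces $\eta^*(e)=\eta^*_{max}$ for every $e\in E_P$. Hence $E_P\subset E_{max}=E_{P_{max}}$, and Lemma \ref{finer} yields that $P_{max}$ is finer than $P$, proving that $P_{max}$ is the finest critical partition.

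I do not anticipate a serious obstacle here; the argument is a short squeeze using admissibility of $\eta^*$. The one point that must be invoked carefully is the identification $E_{P_{max}}=E_{max}$ and the value $\eta^*_{max}=1/S(G)$, both of which are supplied by Theorem \ref{etamax}; and one should note in passing that the computation above also shows $\eta^*(E_P)=k_P-1$, so every critical partition is in fact a Beurling partition, although that observation is not needed for the statement.
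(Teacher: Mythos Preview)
Your argument is correct and is, in fact, more self-contained than the paper's own proof. The paper simply invokes an external result (\cite[Theorem~2.5]{polynomial}) to obtain the inclusion $E_P\subset E_{P_{\max}}$ and then applies Lemma~\ref{finer}; no further justification is given there. By contrast, you derive $E_P\subset E_{\max}$ directly from ingredients already established in the paper: the admissibility constraint $\eta^*(E_P)\ge k_P-1$, the identity $\eta^*_{\max}=1/S(G)$ from Theorem~\ref{etamax}, and criticality of $P$ combine into a clean squeeze that forces $\eta^*(e)=\eta^*_{\max}$ on $E_P$. This avoids the external reference entirely, and the side observation that every critical partition is Beurling drops out for free. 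The only caveat is that your proof depends on Theorem~\ref{etamax} for both the value of $\eta^*_{\max}$ and the identification $E_{P_{\max}}=E_{\max}$, so one must make sure that theorem is established independently of the present one (which it is in the paper, via \cite[Theorem~4.10]{pietrofairest}).
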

\begin{proof}
By  \cite[Theorem 2.5]{polynomial}, we have $E_P \subset E_{P_{ max}}$. Therefore, by Lemma \ref{finer}, $P_{ max}$ is finer than $P$.
\end{proof}
\subsection{A characterization of  homogeneous graphs}\label{sec6}
Let  $G= (V,E)$ be a connected graph. 
Since $G$ is its own vertex-induced connected subgraph, we have $\theta(G)\leq D(G)$. Since $\theta(G)$ is the weight of the trivial partition, we have $S(G)\leq \theta(G)$.
By (\ref{etamin2}) and (\ref{etamax2}), the following holds:
\begin{equation}\label{maxmin}
 \frac{1}{\eta^*_{ max}} = S(G)\leq \theta(G)\leq D(G) = \frac{1}{\eta^*_{ min}}.
 \end{equation}

The next result gives several equivalent characterizations of homogeneous graphs, see Definition \ref{def:homogeneous}.
\begin{theorem}\label{homo} 
	Let $G=(V,E)$ be a connected graph and let $\Ga = \Ga_G$ be the spanning tree family of $G$. Let $S(G)$ be the strength of $G$, let $D(G)$ be the maximum denseness of $G$, and let $\theta(G)$ be the denseness of $G$. Then, the following statements are equivalent:
	
	\bi 
	\item[ (i)]  $G$ is homogeneous, in the sense of Definition \ref{def:homogeneous}.
	\item[ (ii)]   $S(G)=\theta(G).$
	\item[ (iii)]   $D(G) =\theta(G)$.
	\item[ (iv)]   $S(G)=D(G)$.
	\ei
\end{theorem}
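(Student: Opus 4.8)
The plan is to prove the cycle of implications $(i)\Rightarrow(ii)\Rightarrow(iii)\Rightarrow(iv)\Rightarrow(i)$, exploiting the chain of inequalities (\ref{maxmin}) together with the two Beurling partitions $P_{min}$ and $P_{max}$ produced in Theorem \ref{etamin} and Theorem \ref{etamax}. The key observation is that all four quantities are pinned to the optimal density $\eta^*$ for $\Mod_2(\Phi)$: we have $S(G)=1/\eta^*_{max}$, $D(G)=1/\eta^*_{min}$, and $\theta(G)=\si(E)/(|V|-1)$ with $\sum_{e}\eta^*(e)=|V|-1$, so that $\theta(G)$ is the reciprocal of the \emph{average} value of $\eta^*$. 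Thus (\ref{maxmin}) is nothing but the elementary sandwich $\min \le \text{average} \le \max$ transported through reciprocals, and equality anywhere in the chain forces $\eta^*$ to be constant.

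First I would prove $(i)\Rightarrow(ii)$, $(i)\Rightarrow(iii)$, and $(i)\Rightarrow(iv)$ simultaneously: if $G$ is homogeneous then by Definition \ref{def:homogeneous} the optimal $\eta^*$ is constant, so $\eta^*_{min}=\eta^*_{max}=(|V|-1)/|E|$, and (\ref{maxmin}) collapses to $S(G)=\theta(G)=D(G)$, giving all three of (ii), (iii), (iv) at once. For the converses, the cleanest route is to show that each of (ii), (iii), (iv) forces $\eta^*_{min}=\eta^*_{max}$, hence $\eta^*$ constant, hence (i). Indeed $(iv)$ directly says $1/\eta^*_{max}=1/\eta^*_{min}$, so $\eta^*_{min}=\eta^*_{max}$ and $\eta^*$ is constant. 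For $(ii)$, i.e. $S(G)=\theta(G)$, the inequality chain (\ref{maxmin}) shows $1/\eta^*_{max}=S(G)=\theta(G)\le D(G)=1/\eta^*_{min}$, so $\eta^*_{max}=1/\theta(G)$; but then, since $\theta(G)$ is the reciprocal of the average of $\eta^*$ over $E$ and every value of $\eta^*$ is at most $\eta^*_{max}=1/\theta(G)$ with average exactly $1/\theta(G)$, we conclude $\eta^*\equiv 1/\theta(G)$. Symmetrically, $(iii)$, i.e. $D(G)=\theta(G)$, gives $\eta^*_{min}=1/\theta(G)$ equal to the average of $\eta^*$, and since every value is $\ge \eta^*_{min}$ this again forces $\eta^*$ to be constant. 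In each case constancy of $\eta^*$ is exactly (i) by Definition \ref{def:homogeneous}.

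Alternatively — and this may be the more natural packaging for the paper — one can phrase the converses using the Beurling partitions explicitly rather than the averaging argument: by Theorem \ref{etamin} there is a Beurling partition $P_{min}$ with cut set $E\setminus E_{min}$, and by Theorem \ref{etamax} a Beurling partition $P_{max}$ with cut set $E_{max}$; homogeneity is equivalent to $E_{min}=E_{max}=E$, i.e. to both of these Beurling partitions being the trivial partition. Then $(ii)$ says $w(P_{max})=\theta(G)=w(\text{trivial})$, and since $P_{max}$ is the finest critical partition (Theorem \ref{finest}) and the trivial partition is the finest of all partitions, one argues $P_{max}$ must coincide with the trivial partition, forcing $E_{max}=E$ and hence $\eta^*$ constant; similarly $(iii)$ forces $P_{min}$ trivial via the fact that connected components of $H_{min}$ are maximum denseness subgraphs.

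The main obstacle is the converse direction, specifically making rigorous the step "equality of two of $S,\theta,D$ forces $\eta^*$ constant." The averaging argument handles this cleanly once one records the identity $\sum_{e\in E}\eta^*(e)=|V|-1$ (which holds because $\eta^*$ is the optimal edge-usage density and corresponds to a probability distribution on spanning trees, each of size $|V|-1$; this is already noted in the excerpt around Theorem \ref{meomod}) together with $\theta(G)=|E|/(|V|-1)$ in the unweighted case. So the only real work is to verify that $\theta(G)^{-1}$ equals the mean of $\eta^*$ and then invoke the trivial fact that a vector whose mean equals its max (or its min) is constant. The implication $(iv)\Rightarrow(i)$ is immediate from (\ref{maxmin}); the implications $(ii)\Rightarrow(i)$ and $(iii)\Rightarrow(i)$ each need this one-line averaging remark; and $(i)\Rightarrow(ii),(iii),(iv)$ is a direct substitution into (\ref{maxmin}). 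I expect the whole proof to be short.
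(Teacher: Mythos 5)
Your proposal is correct, and the easy parts coincide with the paper: both you and the paper get (i)$\Rightarrow$(ii),(iii),(iv) and (iv)$\Rightarrow$(i) directly from the chain (\ref{maxmin}), since homogeneity means $\eta^*_{min}=\eta^*_{max}$. Where you genuinely diverge is in the two nontrivial converses. The paper proves (ii)$\Rightarrow$(i) structurally: if $S(G)=\theta(G)$ the trivial partition is critical, and since $P_{max}$ is the finest critical partition (Theorem \ref{finest}) it must be the trivial one, so $E_{P_{max}}=E$ and $\eta^*$ is constant; and it proves (iii)$\Rightarrow$(i) by observing that $D(G)=\theta(G)$ makes $G$ its own maximum denseness subgraph, hence a homogeneous core by \cite[Theorem 5.8]{pietrofairest}, hence homogeneous. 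Your primary route instead uses only the identities $S(G)=1/\eta^*_{max}$, $D(G)=1/\eta^*_{min}$ from (\ref{etamax2}) and (\ref{etamin2}), together with $\eta^*(E)=|V|-1$ (recorded after Theorem \ref{meomod} and used in the remark on the trivial Beurling partition), so that $1/\theta(G)$ is exactly the mean of $\eta^*$; then ``max equals mean'' or ``min equals mean'' trivially forces $\eta^*$ constant. This averaging argument is more elementary and self-contained — it avoids Theorem \ref{finest} and the homogeneous-core machinery entirely — while the paper's argument buys extra structural information (identification of $P_{max}$ with the trivial partition, and of $G$ with a homogeneous core) that it reuses elsewhere. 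Your sketched alternative via Beurling partitions essentially reproduces the paper's (ii)$\Rightarrow$(i) step, though your suggested treatment of (iii) via $P_{min}$ is vaguer than the paper's one-line appeal to maximum denseness subgraphs being homogeneous cores; since your averaging route is complete on its own, this does not affect correctness.
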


\begin{proof}[Proof for Theorem \ref{homo}]
 We recall that $G$ is homogeneous if and only if  $\eta^*_{min}=\eta^*_{max}$. Thus, by (\ref{maxmin}), (i) implies (ii), (iii), and (iv), and also (iv) implies (i), (ii), and (iii).
Therefore, it is enough to show that (ii) implies (i) and (iii) implies (i).
 
 Assume that $S(G)=\theta(G)$. Then, the trivial partition is a critical partition. Let $P_{ max}$ be the Beurling partition constructed in Theorem \ref{etamax}. By Theorem \ref{finest}, $P_{ max}$ is the trivial partition and $E = E_{P_{ max}}$. Therefore, $\eta^*$ is constant.

Assume that $D(G) =\theta(G)$. Then, $G$ is a maximum denseness subgraph of $G$ which implies $G$ is a homogeneous core. Therefore, $G$ is homogeneous.
\end{proof}
\subsection{Deflation processes}\label{sec:deflation}
We recall the deflation process by shrinking homogeneous cores in \cite{pietrofairest}. Let $H_{ min}$ be the subgraph induced by $E_{ min}$, where $E_{ min}$ is defined as in  (\ref{eq:e-min}). Then, any connected component $H$ of $H_{ min}$ is a homogeneous core \cite[Theorem 5.2]{pietrofairest}, see Definition \ref{def:homogeneous-core}.

In \cite{pietrofairest}, it was shown that when we start from a connected graph $G$, there exists a homogeneous core $H \subset G$ and $H =(V_H,E_H)$. This divides the $\MEO$ problem on $G$ into two subproblems: an $\MEO$ problem on $H$ and one on $G/H$ where $G/H$ is the shrunk graph of the partition  $P_H$ which is formed by $V_H$ and all singletons $v$ where $ v \notin V_H$. It can be shown that $P_H$ is a Beurling partition using Theorem \ref{fair} and Definition \ref{def:homogeneous-core}.

We have

\[ \MEO(\Ga_{H}) = \frac{(|V_H|-1)^2}{|E_{H}|},\]
and

\[ \MEO (\Ga_G) = \MEO(\Ga_{G/H}) + \frac{(|V_H|-1)^2}{|E_{H}|}.\]

Iterating this procedure on $\Ga_{G/H}$ gives a decomposition of $G$  into a sequence of shrunk graphs, eventually terminating when a final homogeneous graph is shrunk to a single vertex. By tracing back  through the deflation process, one can construct an optimal pmf for the $\MEO$ problem on $G$ from corresponding optimal pmfs on the various homogeneous cores shrunk during the deflation process.
This deflation process in \cite{pietrofairest} is described by shrinking each homogeneous core which is a connected component of $H_{min}$ in each step. 

In this paper, we describe the deflation process by decomposing a graph recursively using Beurling partitions such as $P_{max}$ and $P_{min}$. In the deflation process using $P_{min}$, we shrink all the connected components of $H_{min}$ at the same time in each step, thus this one is slightly different from the deflation process given in \cite{pietrofairest}. 

Now, we provide two examples of deflation processes using Beurling partitions.

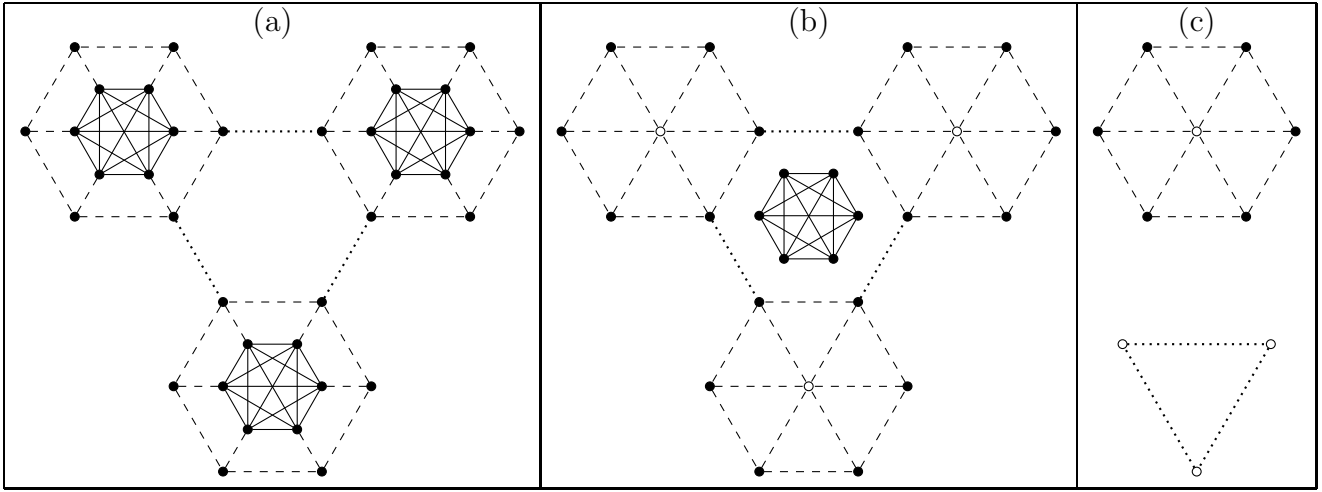
\begin{figure}[t]
	\centering
	\begin{center}
		\begin{tabular}{ |c|c|c| } 
			\hline
			(a) &  (b) & (c) \\
			\begin{tikzpicture}[scale=1.3, auto, node distance=3cm,  thin]
				\begin{scope}[every node/.style={circle,draw=black,fill=black!100!,font=\sffamily\Large\bfseries}]
					
					\node (A) [scale=0.3]at (0,0) {};
					\node (B)[scale=0.3] at (1,0) {};
					\node (C)[scale=0.3] at (1.5,0.87) {};
					\node (D) [scale=0.3]at (1,1.73) {};
					\node (E)[scale=0.3] at (0,1.73) {};
					\node (F)[scale=0.3] at (-0.5,0.87) {};
					\node (G)[scale=0.3] at (0.25,0.43) {};
					\node (H) [scale=0.3]at (0.75,0.43) {};
					\node (I)[scale=0.3] at (1,0.87) {};
					\node (J)[scale=0.3] at (0.75,1.3) {};
					\node (K)[scale=0.3] at (0.25,1.3) {};
					\node (L)[scale=0.3] at (0,0.87) {};
					
					\node (A1) [scale=0.3]at (0+3,0) {};
					\node (B1)[scale=0.3] at (1+3,0) {};
					\node (C1)[scale=0.3] at (1.5+3,0.87) {};
					\node (D1) [scale=0.3]at (1+3,1.73) {};
					\node (E1)[scale=0.3] at (0+3,1.73) {};
					\node (F1)[scale=0.3] at (-0.5+3,0.87) {};
					\node (G1)[scale=0.3] at (0.25+3,0.43) {};
					\node (H1) [scale=0.3]at (0.75+3,0.43) {};
					\node (I1)[scale=0.3] at (1+3,0.87) {};
					\node (J1)[scale=0.3] at (0.75+3,1.3) {};
					\node (K1)[scale=0.3] at (0.25+3,1.3) {};
					\node (L1)[scale=0.3] at (0+3,0.87) {};
					
					\node (A2) [scale=0.3]at (0+1.5,0-2.6) {};
					\node (B2)[scale=0.3] at (1+1.5,0-2.6) {};
					\node (C2)[scale=0.3] at (1.5+1.5,0.87-2.6) {};
					\node (D2) [scale=0.3]at (1+1.5,1.73-2.6) {};
					\node (E2)[scale=0.3] at (0+1.5,1.73-2.6) {};
					\node (F2)[scale=0.3] at (-0.5+1.5,0.87-2.6) {};
					\node (G2)[scale=0.3] at (0.25+1.5,0.43-2.6) {};
					\node (H2) [scale=0.3]at (0.75+1.5,0.43-2.6) {};
					\node (I2)[scale=0.3] at (1+1.5,0.87-2.6) {};
					\node (J2)[scale=0.3] at (0.75+1.5,1.3-2.6) {};
					\node (K2)[scale=0.3] at (0.25+1.5,1.3-2.6) {};
					\node (L2)[scale=0.3] at (0+1.5,0.87-2.6) {};
					
				\end{scope}
				\begin{scope}[every edge/.style={draw=black,thin}]
					
					\draw  (G) edge node{} (H); 
					\draw  (H) edge node{} (I); 
					\draw  (I) edge node{} (J);
					\draw  (J) edge node{} (K);
					\draw  (K) edge node{} (L); 
					\draw  (L) edge node{} (G);

					\draw  (G1) edge node{} (H1); 
					\draw  (H1) edge node{} (I1); 
					\draw  (I1) edge node{} (J1);
					\draw  (J1) edge node{} (K1);
					\draw  (K1) edge node{} (L1); 
					\draw  (L1) edge node{} (G1);
					
					\draw  (G2) edge node{} (H2); 
					\draw  (H2) edge node{} (I2); 
					\draw  (I2) edge node{} (J2);
					\draw  (J2) edge node{} (K2);
					\draw  (K2) edge node{} (L2); 
					\draw  (L2) edge node{} (G2); 
					
					\draw  (G) edge node{} (I);
					\draw  (G) edge node{} (J);
					\draw  (G) edge node{} (K);
					\draw  (H) edge node{} (J);
					\draw  (H) edge node{} (K);
					\draw  (H) edge node{} (L);
					\draw  (I) edge node{} (K);
					\draw  (I) edge node{} (L);
					\draw  (J) edge node{} (L);

					\draw  (G1) edge node{} (I1);
					\draw  (G1) edge node{} (J1);
					\draw  (G1) edge node{} (K1);
					\draw  (H1) edge node{} (J1);
					\draw  (H1) edge node{} (K1);
					\draw  (H1) edge node{} (L1);
					\draw  (I1) edge node{} (K1);
					\draw  (I1) edge node{} (L1);
					\draw  (J1) edge node{} (L1);
					
					\draw  (G2) edge node{} (I2);
					\draw  (G2) edge node{} (J2);
					\draw  (G2) edge node{} (K2);
					\draw  (H2) edge node{} (J2);
					\draw  (H2) edge node{} (K2);
					\draw  (H2) edge node{} (L2);
					\draw  (I2) edge node{} (K2);
					\draw  (I2) edge node{} (L2);
					\draw  (J2) edge node{} (L2);
					
				\end{scope}
				
				\begin{scope}[every edge/.style={draw=black,dashed}]
					
					\draw  (A) edge node{} (B);
					\draw  (B) edge node{} (C);
					\draw  (C) edge node{} (D);
					\draw  (D) edge node{} (E);
					\draw  (E) edge node{} (F);
					\draw  (F) edge node{} (A);
					
					\draw  (A1) edge node{} (B1);
					\draw  (B1) edge node{} (C1);
					\draw  (C1) edge node{} (D1);
					\draw  (D1) edge node{} (E1);
					\draw  (E1) edge node{} (F1);
					\draw  (F1) edge node{} (A1);
					
					\draw  (A2) edge node{} (B2);
					\draw  (B2) edge node{} (C2);
					\draw  (C2) edge node{} (D2);
					\draw  (D2) edge node{} (E2);
					\draw  (E2) edge node{} (F2);
					\draw  (F2) edge node{} (A2);
					
					\draw  (A) edge node{} (G);
					\draw  (B) edge node{} (H);
					\draw  (C) edge node{} (I);
					\draw  (D) edge node{} (J);
					\draw  (E) edge node{} (K);
					\draw  (F) edge node{} (L);
					
					\draw  (A1) edge node{} (G1);
					\draw  (B1) edge node{} (H1);
					\draw  (C1) edge node{} (I1);
					\draw  (D1) edge node{} (J1);
					\draw  (E1) edge node{} (K1);
					\draw  (F1) edge node{} (L1);
					
					\draw  (A2) edge node{} (G2);
					\draw  (B2) edge node{} (H2);
					\draw  (C2) edge node{} (I2);
					\draw  (D2) edge node{} (J2);
					\draw  (E2) edge node{} (K2);
					\draw  (F2) edge node{} (L2);
					
				\end{scope}
				
				\begin{scope}[every edge/.style={draw=black,thick, dotted}]
					
					\draw  (C) edge node{} (F1);
					\draw  (B) edge node{} (E2);
					\draw  (D2) edge node{} (A1);
				\end{scope}
			\end{tikzpicture}
			& 
			\begin{tikzpicture}[scale=1.3, auto, node distance=3cm,  thin]
				\begin{scope}[every node/.style={circle,draw=black,fill=black!100!,font=\sffamily\Large\bfseries}]
					
					\node (A) [scale=0.3]at (0,0) {};
					\node (B)[scale=0.3] at (1,0) {};
					\node (C)[scale=0.3] at (1.5,0.87) {};
					\node (D) [scale=0.3]at (1,1.73) {};
					\node (E)[scale=0.3] at (0,1.73) {};
					\node (F)[scale=0.3] at (-0.5,0.87) {};
					
					\node (A1) [scale=0.3]at (0+3,0) {};
					\node (B1)[scale=0.3] at (1+3,0) {};
					\node (C1)[scale=0.3] at (1.5+3,0.87) {};
					\node (D1) [scale=0.3]at (1+3,1.73) {};
					\node (E1)[scale=0.3] at (0+3,1.73) {};
					\node (F1)[scale=0.3] at (-0.5+3,0.87) {};
					
					\node (A2) [scale=0.3]at (0+1.5,0-2.6) {};
					\node (B2)[scale=0.3] at (1+1.5,0-2.6) {};
					\node (C2)[scale=0.3] at (1.5+1.5,0.87-2.6) {};
					\node (D2) [scale=0.3]at (1+1.5,1.73-2.6) {};
					\node (E2)[scale=0.3] at (0+1.5,1.73-2.6) {};
					\node (F2)[scale=0.3] at (-0.5+1.5,0.87-2.6) {};

					\node (G2)[scale=0.3] at (0.25+1.5,0.43-2.6 -0.43 -0.43+2.6) {};
					\node (H2) [scale=0.3]at (0.75+1.5,0.43-2.6-0.43 -0.43+2.6) {};
					\node (I2)[scale=0.3] at (1+1.5,0.87-2.6-0.43 -0.43+2.6) {};
					\node (J2)[scale=0.3] at (0.75+1.5,1.3-2.6-0.43-0.43+2.6) {};
					\node (K2)[scale=0.3] at (0.25+1.5,1.3-2.6-0.43-0.43+2.6) {};
					\node (L2)[scale=0.3] at (0+1.5,0.87-2.6-0.43-0.43+2.6) {};
				\end{scope}
				
				\begin{scope}[every node/.style={circle,draw=black,font=
						\sffamily\Large\bfseries}]
					
					\node (W1) [scale=0.3]at (0.5,0.87) {};
					\node (U1) [scale=0.3]at (3.5,0.87) {};
					\node (V1) [scale=0.3]at (2,-1.73) {};
				\end{scope}

				\begin{scope}[every edge/.style={draw=black,thin}]
					

					
					\draw  (G2) edge node{} (H2); 
					\draw  (H2) edge node{} (I2); 
					\draw  (I2) edge node{} (J2);
					\draw  (J2) edge node{} (K2);
					\draw  (K2) edge node{} (L2); 
					\draw  (L2) edge node{} (G2); 
					

					
					\draw  (G2) edge node{} (I2);
					\draw  (G2) edge node{} (J2);
					\draw  (G2) edge node{} (K2);
					\draw  (H2) edge node{} (J2);
					\draw  (H2) edge node{} (K2);
					\draw  (H2) edge node{} (L2);
					\draw  (I2) edge node{} (K2);
					\draw  (I2) edge node{} (L2);
					\draw  (J2) edge node{} (L2);
					
				\end{scope}
				
				\begin{scope}[every edge/.style={draw=black,dashed}]
					
					\draw  (A) edge node{} (B);
					\draw  (B) edge node{} (C);
					\draw  (C) edge node{} (D);
					\draw  (D) edge node{} (E);
					\draw  (E) edge node{} (F);
					\draw  (F) edge node{} (A);
					
					\draw  (A1) edge node{} (B1);
					\draw  (B1) edge node{} (C1);
					\draw  (C1) edge node{} (D1);
					\draw  (D1) edge node{} (E1);
					\draw  (E1) edge node{} (F1);
					\draw  (F1) edge node{} (A1);
					
					\draw  (A2) edge node{} (B2);
					\draw  (B2) edge node{} (C2);
					\draw  (C2) edge node{} (D2);
					\draw  (D2) edge node{} (E2);
					\draw  (E2) edge node{} (F2);
					\draw  (F2) edge node{} (A2);
					
					\draw  (A) edge node{} (W1);
					\draw  (B) edge node{} (W1);
					\draw  (C) edge node{} (W1);
					\draw  (D) edge node{} (W1);
					\draw  (E) edge node{} (W1);
					\draw  (F) edge node{} (W1);
					
					\draw  (A1) edge node{} (U1);
					\draw  (B1) edge node{} (U1);
					\draw  (C1) edge node{} (U1);
					\draw  (D1) edge node{} (U1);
					\draw  (E1) edge node{} (U1);
					\draw  (F1) edge node{} (U1);
					
					\draw  (A2) edge node{} (V1);
					\draw  (B2) edge node{} (V1);
					\draw  (C2) edge node{} (V1);
					\draw  (D2) edge node{} (V1);
					\draw  (E2) edge node{} (V1);
					\draw  (F2) edge node{} (V1);
					
				\end{scope}
				
				\begin{scope}[every edge/.style={draw=black,thick, dotted}]
					
					\draw  (C) edge node{} (F1);
					\draw  (B) edge node{} (E2);
					\draw  (D2) edge node{} (A1);
				\end{scope}
			\end{tikzpicture} &  
			\begin{tikzpicture}[scale=1.3, auto, node distance=3cm,  thin]
				\begin{scope}[every node/.style={circle,draw=black,font=
						\sffamily\Large\bfseries}]

					\node (U)[scale=0.3] at (0.5,0.87) {};
				\end{scope}
				\begin{scope}[every node/.style={circle,draw=black,fill=black!100!,font=\sffamily\Large\bfseries}]
					\node (A) [scale=0.3]at (0,0) {};
					\node (B)[scale=0.3] at (1,0) {};
					\node (C)[scale=0.3] at (1.5,0.87) {};
					\node (D) [scale=0.3]at (1,1.73) {};
					\node (E)[scale=0.3] at (0,1.73) {};
					\node (F)[scale=0.3] at (-0.5,0.87) {};
					
				\end{scope}

				\begin{scope}[every node/.style={circle,draw=black,font=
						\sffamily\Large\bfseries}]
					
					\node (W1) [scale=0.3]at (-0.25,-1.3) {};
					\node (U1) [scale=0.3]at (1.25,-1.3) {};
					\node (V1) [scale=0.3]at (0.5,-2.6) {};
				\end{scope}

				\begin{scope}[every edge/.style={draw=black,thick, dotted}]
					
					\draw  (W1) edge node{} (U1);
					\draw  (U1) edge node{} (V1);
					\draw  (V1) edge node{} (W1);
				\end{scope}

				\begin{scope}[every edge/.style={draw=black,dashed}]
					\draw  (A) edge node{} (B);
					\draw  (B) edge node{} (C);
					\draw  (C) edge node{} (D);
					\draw  (D) edge node{} (E);
					\draw  (E) edge node{} (F);
					\draw  (F) edge node{} (A);

					\draw  (A) edge node{} (U);
					\draw  (B) edge node{} (U);
					\draw  (C) edge node{} (U);
					\draw  (D) edge node{} (U);
					\draw  (E) edge node{} (U);
					\draw  (F) edge node{} (U);
				\end{scope}
			\end{tikzpicture}
			\\
			\hline
		\end{tabular}
	\end{center}
	\caption{Deflation process using the Beurling partition $P_{ min}$. Edges are styled according to the optimal density $\eta^*$. Solid edges are present with $\eta^* = \frac{1}{3}$, dashed edges with $\eta^* = \frac{1}{2}$ and dotted edges with  $\eta^* = \frac{2}{3}$.}\label{figure2}
\end{figure}

Figure \ref{figure2} describes an example of the deflation process using the Beurling partition $P_{ min}$. Consider a graph $G$ in Figure \ref{figure2}(a). Let $H_{ min} = (V_{H_{ min}}, E_{ min})$ be the subgraph induced by the least-used edges (those solid edges with probability $\frac{1}{3}$). We have $H$ has $3$ connected components $H_i =  (V_{H_i},E_{H_i}) $, $i = 1,2,3$. By \cite[Theorem 5.2]{pietrofairest}, every $H_i$ is a homogeneous core. We shrink each of these homogeneous cores to a single vertex producing a new graph as in Figure \ref{figure2}(b) where each of the cores (the complete graph $K_6$ which is described as in the middle of  \ref{figure2}(b)) is shrunk to a white vertex. The new graph also has 3 identical homogeneous cores (the wheel graph $W_7$ which is described at the top of Figure \ref{figure2}(c)) which are induced by dashed edges ( those with probability $\frac{1}{2}$).  We shrink each of these homogeneous cores to a single vertex producing the triangle graph $C_3$ as in the bottom of Figure \ref{figure2}(c)). Since the triangle graph $C_3$ is homogeneous, we shrink it to a single vertex.

Next, we describe the deflation process by decomposing a graph recursively using the Beurling partition $P_{ max}$.  Starting from a connected graph $G$. Let \[P_{ max}= \left\{ V_1,V_2,...,V_{k_{P_{ max}}} \right\}\] be the Beurling partition of $G$ constructed as in Theorem \ref{etamax}.  Let $G_i = (V_i,E_i)$ be the subgraph induced by $V_i$ for $i=1,2,\dots,k_{P_{ max}}$. By Theorem \ref{serialmod} and Corollary \ref{shrunkhom}, we have

\[ \MEO(\Ga_{G_{P_{ max}}}) = \frac{(k_{P_{ max}}-1)^2}{|E_{P_{ max}}|},\]
so
\[ \MEO (\Ga_G) = \MEO (\Ga_{G_1}) + \MEO (\Ga_{G_2}) +\dots + \MEO (\Ga_{G_{k_{P_{ max}}}})+ \frac{(k_{P_{ max}}-1)^2}{|E_{P_{ max}}|}.\]

Iterating this procedure on each $G_i$ gives a decomposition of the graph $G$ into a sequence of shrunk graphs and subgraphs, eventually terminating when all final subgraphs are homogeneous and can not be decomposed further. By tracing back through the deflation process and using Theorem \ref{serialmod}, we can construct an optimal pmf for the $\MEO$ problem on $G$ from corresponding optimal pmfs on shrunk graphs and subgraphs during the deflation process. 

\begin{figure}[t]
	\centering
	
	\begin{center}
		\begin{tabular}{ |c|c|c| } 
			\hline
			(a) &  (b) & (c) \\
			\begin{tikzpicture}[scale=1.3, auto, node distance=3cm,  thin]
				\begin{scope}[every node/.style={circle,draw=black,fill=black!100!,font=\sffamily\Large\bfseries}]
					
					\node (A) [scale=0.3]at (0,0) {};
					\node (B)[scale=0.3] at (1,0) {};
					\node (C)[scale=0.3] at (1.5,0.87) {};
					\node (D) [scale=0.3]at (1,1.73) {};
					\node (E)[scale=0.3] at (0,1.73) {};
					\node (F)[scale=0.3] at (-0.5,0.87) {};
					\node (G)[scale=0.3] at (0.25,0.43) {};
					\node (H) [scale=0.3]at (0.75,0.43) {};
					\node (I)[scale=0.3] at (1,0.87) {};
					\node (J)[scale=0.3] at (0.75,1.3) {};
					\node (K)[scale=0.3] at (0.25,1.3) {};
					\node (L)[scale=0.3] at (0,0.87) {};
					
					\node (A1) [scale=0.3]at (0+3,0) {};
					\node (B1)[scale=0.3] at (1+3,0) {};
					\node (C1)[scale=0.3] at (1.5+3,0.87) {};
					\node (D1) [scale=0.3]at (1+3,1.73) {};
					\node (E1)[scale=0.3] at (0+3,1.73) {};
					\node (F1)[scale=0.3] at (-0.5+3,0.87) {};
					\node (G1)[scale=0.3] at (0.25+3,0.43) {};
					\node (H1) [scale=0.3]at (0.75+3,0.43) {};
					\node (I1)[scale=0.3] at (1+3,0.87) {};
					\node (J1)[scale=0.3] at (0.75+3,1.3) {};
					\node (K1)[scale=0.3] at (0.25+3,1.3) {};
					\node (L1)[scale=0.3] at (0+3,0.87) {};
					
					\node (A2) [scale=0.3]at (0+1.5,0-2.6) {};
					\node (B2)[scale=0.3] at (1+1.5,0-2.6) {};
					\node (C2)[scale=0.3] at (1.5+1.5,0.87-2.6) {};
					\node (D2) [scale=0.3]at (1+1.5,1.73-2.6) {};
					\node (E2)[scale=0.3] at (0+1.5,1.73-2.6) {};
					\node (F2)[scale=0.3] at (-0.5+1.5,0.87-2.6) {};
					\node (G2)[scale=0.3] at (0.25+1.5,0.43-2.6) {};
					\node (H2) [scale=0.3]at (0.75+1.5,0.43-2.6) {};
					\node (I2)[scale=0.3] at (1+1.5,0.87-2.6) {};
					\node (J2)[scale=0.3] at (0.75+1.5,1.3-2.6) {};
					\node (K2)[scale=0.3] at (0.25+1.5,1.3-2.6) {};
					\node (L2)[scale=0.3] at (0+1.5,0.87-2.6) {};
					
				\end{scope}
				\begin{scope}[every edge/.style={draw=black,thin}]
					
					\draw  (G) edge node{} (H); 
					\draw  (H) edge node{} (I); 
					\draw  (I) edge node{} (J);
					\draw  (J) edge node{} (K);
					\draw  (K) edge node{} (L); 
					\draw  (L) edge node{} (G);

					\draw  (G1) edge node{} (H1); 
					\draw  (H1) edge node{} (I1); 
					\draw  (I1) edge node{} (J1);
					\draw  (J1) edge node{} (K1);
					\draw  (K1) edge node{} (L1); 
					\draw  (L1) edge node{} (G1);
					
					\draw  (G2) edge node{} (H2); 
					\draw  (H2) edge node{} (I2); 
					\draw  (I2) edge node{} (J2);
					\draw  (J2) edge node{} (K2);
					\draw  (K2) edge node{} (L2); 
					\draw  (L2) edge node{} (G2); 
					
					\draw  (G) edge node{} (I);
					\draw  (G) edge node{} (J);
					\draw  (G) edge node{} (K);
					\draw  (H) edge node{} (J);
					\draw  (H) edge node{} (K);
					\draw  (H) edge node{} (L);
					\draw  (I) edge node{} (K);
					\draw  (I) edge node{} (L);
					\draw  (J) edge node{} (L);

					\draw  (G1) edge node{} (I1);
					\draw  (G1) edge node{} (J1);
					\draw  (G1) edge node{} (K1);
					\draw  (H1) edge node{} (J1);
					\draw  (H1) edge node{} (K1);
					\draw  (H1) edge node{} (L1);
					\draw  (I1) edge node{} (K1);
					\draw  (I1) edge node{} (L1);
					\draw  (J1) edge node{} (L1);
					
					\draw  (G2) edge node{} (I2);
					\draw  (G2) edge node{} (J2);
					\draw  (G2) edge node{} (K2);
					\draw  (H2) edge node{} (J2);
					\draw  (H2) edge node{} (K2);
					\draw  (H2) edge node{} (L2);
					\draw  (I2) edge node{} (K2);
					\draw  (I2) edge node{} (L2);
					\draw  (J2) edge node{} (L2);
					
				\end{scope}
				
				\begin{scope}[every edge/.style={draw=black,dashed}]
					
					\draw  (A) edge node{} (B);
					\draw  (B) edge node{} (C);
					\draw  (C) edge node{} (D);
					\draw  (D) edge node{} (E);
					\draw  (E) edge node{} (F);
					\draw  (F) edge node{} (A);
					
					\draw  (A1) edge node{} (B1);
					\draw  (B1) edge node{} (C1);
					\draw  (C1) edge node{} (D1);
					\draw  (D1) edge node{} (E1);
					\draw  (E1) edge node{} (F1);
					\draw  (F1) edge node{} (A1);
					
					\draw  (A2) edge node{} (B2);
					\draw  (B2) edge node{} (C2);
					\draw  (C2) edge node{} (D2);
					\draw  (D2) edge node{} (E2);
					\draw  (E2) edge node{} (F2);
					\draw  (F2) edge node{} (A2);
					
					\draw  (A) edge node{} (G);
					\draw  (B) edge node{} (H);
					\draw  (C) edge node{} (I);
					\draw  (D) edge node{} (J);
					\draw  (E) edge node{} (K);
					\draw  (F) edge node{} (L);
					
					\draw  (A1) edge node{} (G1);
					\draw  (B1) edge node{} (H1);
					\draw  (C1) edge node{} (I1);
					\draw  (D1) edge node{} (J1);
					\draw  (E1) edge node{} (K1);
					\draw  (F1) edge node{} (L1);
					
					\draw  (A2) edge node{} (G2);
					\draw  (B2) edge node{} (H2);
					\draw  (C2) edge node{} (I2);
					\draw  (D2) edge node{} (J2);
					\draw  (E2) edge node{} (K2);
					\draw  (F2) edge node{} (L2);
					
				\end{scope}
				
				\begin{scope}[every edge/.style={draw=black,thick, dotted}]
					
					\draw  (C) edge node{} (F1);
					\draw  (B) edge node{} (E2);
					\draw  (D2) edge node{} (A1);
				\end{scope}
			\end{tikzpicture}
			& 
			\begin{tikzpicture}[scale=1.3, auto, node distance=3cm,  thin]
				\begin{scope}[every node/.style={circle,draw=black,fill=black!100!,font=\sffamily\Large\bfseries}]
					
					\node (A) [scale=0.3]at (0,0) {};
					\node (B)[scale=0.3] at (1,0) {};
					\node (C)[scale=0.3] at (1.5,0.87) {};
					\node (D) [scale=0.3]at (1,1.73) {};
					\node (E)[scale=0.3] at (0,1.73) {};
					\node (F)[scale=0.3] at (-0.5,0.87) {};
					\node (G)[scale=0.3] at (0.25,0.43) {};
					\node (H) [scale=0.3]at (0.75,0.43) {};
					\node (I)[scale=0.3] at (1,0.87) {};
					\node (J)[scale=0.3] at (0.75,1.3) {};
					\node (K)[scale=0.3] at (0.25,1.3) {};
					\node (L)[scale=0.3] at (0,0.87) {};
					
					\node (A1) [scale=0.3]at (0+3,0) {};
					\node (B1)[scale=0.3] at (1+3,0) {};
					\node (C1)[scale=0.3] at (1.5+3,0.87) {};
					\node (D1) [scale=0.3]at (1+3,1.73) {};
					\node (E1)[scale=0.3] at (0+3,1.73) {};
					\node (F1)[scale=0.3] at (-0.5+3,0.87) {};
					\node (G1)[scale=0.3] at (0.25+3,0.43) {};
					\node (H1) [scale=0.3]at (0.75+3,0.43) {};
					\node (I1)[scale=0.3] at (1+3,0.87) {};
					\node (J1)[scale=0.3] at (0.75+3,1.3) {};
					\node (K1)[scale=0.3] at (0.25+3,1.3) {};
					\node (L1)[scale=0.3] at (0+3,0.87) {};
					
					\node (A2) [scale=0.3]at (0+1.5,0-2.6) {};
					\node (B2)[scale=0.3] at (1+1.5,0-2.6) {};
					\node (C2)[scale=0.3] at (1.5+1.5,0.87-2.6) {};
					\node (D2) [scale=0.3]at (1+1.5,1.73-2.6) {};
					\node (E2)[scale=0.3] at (0+1.5,1.73-2.6) {};
					\node (F2)[scale=0.3] at (-0.5+1.5,0.87-2.6) {};
					\node (G2)[scale=0.3] at (0.25+1.5,0.43-2.6) {};
					\node (H2) [scale=0.3]at (0.75+1.5,0.43-2.6) {};
					\node (I2)[scale=0.3] at (1+1.5,0.87-2.6) {};
					\node (J2)[scale=0.3] at (0.75+1.5,1.3-2.6) {};
					\node (K2)[scale=0.3] at (0.25+1.5,1.3-2.6) {};
					\node (L2)[scale=0.3] at (0+1.5,0.87-2.6) {};
				\end{scope}
				
				\begin{scope}[every node/.style={circle,draw=black,font=
						\sffamily\Large\bfseries}]
					
					\node (W1) [scale=0.3]at (2,-0.43) {};
					\node (U1) [scale=0.3]at (2.38,0.22) {};
					\node (V1) [scale=0.3]at (1.63,0.22) {};
				\end{scope}

				\begin{scope}[every edge/.style={draw=black,thin}]
					
					\draw  (G) edge node{} (H); 
					\draw  (H) edge node{} (I); 
					\draw  (I) edge node{} (J);
					\draw  (J) edge node{} (K);
					\draw  (K) edge node{} (L); 
					\draw  (L) edge node{} (G);

					\draw  (G1) edge node{} (H1); 
					\draw  (H1) edge node{} (I1); 
					\draw  (I1) edge node{} (J1);
					\draw  (J1) edge node{} (K1);
					\draw  (K1) edge node{} (L1); 
					\draw  (L1) edge node{} (G1);
					
					\draw  (G2) edge node{} (H2); 
					\draw  (H2) edge node{} (I2); 
					\draw  (I2) edge node{} (J2);
					\draw  (J2) edge node{} (K2);
					\draw  (K2) edge node{} (L2); 
					\draw  (L2) edge node{} (G2); 
					
					\draw  (G) edge node{} (I);
					\draw  (G) edge node{} (J);
					\draw  (G) edge node{} (K);
					\draw  (H) edge node{} (J);
					\draw  (H) edge node{} (K);
					\draw  (H) edge node{} (L);
					\draw  (I) edge node{} (K);
					\draw  (I) edge node{} (L);
					\draw  (J) edge node{} (L);

					\draw  (G1) edge node{} (I1);
					\draw  (G1) edge node{} (J1);
					\draw  (G1) edge node{} (K1);
					\draw  (H1) edge node{} (J1);
					\draw  (H1) edge node{} (K1);
					\draw  (H1) edge node{} (L1);
					\draw  (I1) edge node{} (K1);
					\draw  (I1) edge node{} (L1);
					\draw  (J1) edge node{} (L1);
					
					\draw  (G2) edge node{} (I2);
					\draw  (G2) edge node{} (J2);
					\draw  (G2) edge node{} (K2);
					\draw  (H2) edge node{} (J2);
					\draw  (H2) edge node{} (K2);
					\draw  (H2) edge node{} (L2);
					\draw  (I2) edge node{} (K2);
					\draw  (I2) edge node{} (L2);
					\draw  (J2) edge node{} (L2);
					
				\end{scope}
				
				\begin{scope}[every edge/.style={draw=black,dashed}]
					
					\draw  (A) edge node{} (B);
					\draw  (B) edge node{} (C);
					\draw  (C) edge node{} (D);
					\draw  (D) edge node{} (E);
					\draw  (E) edge node{} (F);
					\draw  (F) edge node{} (A);
					
					\draw  (A1) edge node{} (B1);
					\draw  (B1) edge node{} (C1);
					\draw  (C1) edge node{} (D1);
					\draw  (D1) edge node{} (E1);
					\draw  (E1) edge node{} (F1);
					\draw  (F1) edge node{} (A1);
					
					\draw  (A2) edge node{} (B2);
					\draw  (B2) edge node{} (C2);
					\draw  (C2) edge node{} (D2);
					\draw  (D2) edge node{} (E2);
					\draw  (E2) edge node{} (F2);
					\draw  (F2) edge node{} (A2);
					
					\draw  (A) edge node{} (G);
					\draw  (B) edge node{} (H);
					\draw  (C) edge node{} (I);
					\draw  (D) edge node{} (J);
					\draw  (E) edge node{} (K);
					\draw  (F) edge node{} (L);
					
					\draw  (A1) edge node{} (G1);
					\draw  (B1) edge node{} (H1);
					\draw  (C1) edge node{} (I1);
					\draw  (D1) edge node{} (J1);
					\draw  (E1) edge node{} (K1);
					\draw  (F1) edge node{} (L1);
					
					\draw  (A2) edge node{} (G2);
					\draw  (B2) edge node{} (H2);
					\draw  (C2) edge node{} (I2);
					\draw  (D2) edge node{} (J2);
					\draw  (E2) edge node{} (K2);
					\draw  (F2) edge node{} (L2);
					
				\end{scope}
				
				\begin{scope}[every edge/.style={draw=black,thick, dotted}]
					
					\draw  (W1) edge node{} (U1);
					\draw  (U1) edge node{} (V1);
					\draw  (V1) edge node{} (W1);
				\end{scope}
			\end{tikzpicture} &  
			\begin{tikzpicture}[scale=1.3, auto, node distance=3cm,  thin]
				\begin{scope}[every node/.style={circle,draw=black,font=
						\sffamily\Large\bfseries}]
					\node (A) [scale=0.3]at (0,0) {};
					\node (B)[scale=0.3] at (1,0) {};
					\node (C)[scale=0.3] at (1.5,0.87) {};
					\node (D) [scale=0.3]at (1,1.73) {};
					\node (E)[scale=0.3] at (0,1.73) {};
					\node (F)[scale=0.3] at (-0.5,0.87) {};
					
					\node (U)[scale=0.3] at (0.5,0.87) {};
				\end{scope}
				\begin{scope}[every node/.style={circle,draw=black,fill=black!100!,font=\sffamily\Large\bfseries}]
					
					\node (G1)[scale=0.3] at (0.25,-2.17) {};
					\node (H1) [scale=0.3]at (0.75,-2.17) {};
					\node (I1)[scale=0.3] at (1,-1.73) {};
					\node (J1)[scale=0.3] at (0.75,-1.3) {};
					\node (K1)[scale=0.3] at (0.25,-1.3) {};
					\node (L1)[scale=0.3] at (0,-1.73) {};
					
					\node (A1) [scale=0.3]at (0,0-2.6) {};
					\node (B1)[scale=0.3] at (1,0-2.6) {};
					\node (C1)[scale=0.3] at (1.5,0.87-2.6) {};
					\node (D1) [scale=0.3]at (1,1.73-2.6) {};
					\node (E1)[scale=0.3] at (0,1.73-2.6) {};
					\node (F1)[scale=0.3] at (-0.5,0.87-2.6) {};

				\end{scope}
				\begin{scope}[every edge/.style={draw=black,thin}]
					
					\draw  (G1) edge node{} (H1); 
					\draw  (H1) edge node{} (I1); 
					\draw  (I1) edge node{} (J1);
					\draw  (J1) edge node{} (K1);
					\draw  (K1) edge node{} (L1); 
					\draw  (L1) edge node{} (G1);
					
					\draw  (G1) edge node{} (I1);
					\draw  (G1) edge node{} (J1);
					\draw  (G1) edge node{} (K1);
					\draw  (H1) edge node{} (J1);
					\draw  (H1) edge node{} (K1);
					\draw  (H1) edge node{} (L1);
					\draw  (I1) edge node{} (K1);
					\draw  (I1) edge node{} (L1);
					\draw  (J1) edge node{} (L1);
				\end{scope}
				
				\begin{scope}[every edge/.style={draw=black,dashed}]
					\draw  (A) edge node{} (B);
					\draw  (B) edge node{} (C);
					\draw  (C) edge node{} (D);
					\draw  (D) edge node{} (E);
					\draw  (E) edge node{} (F);
					\draw  (F) edge node{} (A);

					\draw  (A) edge node{} (U);
					\draw  (B) edge node{} (U);
					\draw  (C) edge node{} (U);
					\draw  (D) edge node{} (U);
					\draw  (E) edge node{} (U);
					\draw  (F) edge node{} (U);
				\end{scope}
			\end{tikzpicture}
			\\ 
			\hline
		\end{tabular}
	\end{center}

	\caption{Deflation process using the Beurling partition $P_{max}$. Edges are styled according to the optimal density $\eta^*$. Solid edges are present with $\eta^* = \frac{1}{3}$, dashed edges with $\eta^* = \frac{1}{2}$ and dotted edges with  $\eta^* = \frac{2}{3}$.}\label{figure1}
\end{figure}
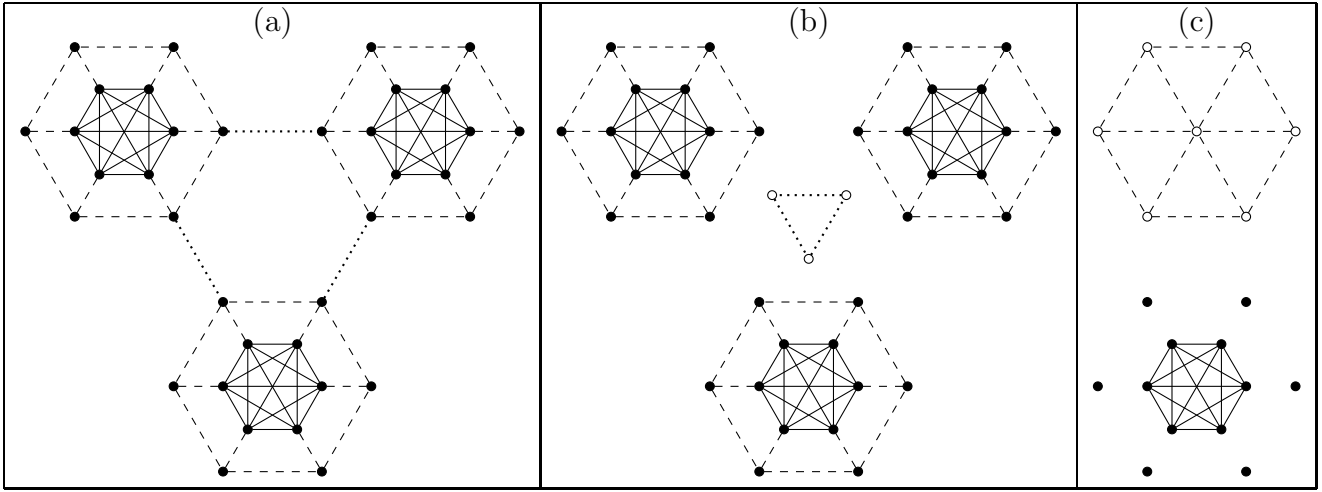

Figure \ref{figure1} describes an example of the deflation process using the Beurling partition $P_{ max}$. Consider a graph $G$ in Figure \ref{figure1}(a). Here, the optimal edge usage probability function $\eta^{*}(e)$ takes three distinct values, indicated by the edge styles as described in the figure caption. 
First, we remove the dotted edges from $G$. As the result, we obtain three identical connected components $G_1, G_2, G_3$ which are three identical subgraphs of $G$. Then, in the graph $G$, we shrink each $G_i$ to a vertex meaning that we identify all the vertices in each $G_i$ as a single vertex and remove all resulting self-loops. Then, the resulting shrunk graph is the triangle graph $C_3$ as in Figure \ref{figure1}(b). Notice that the edge set of $G$ is the disjoint union of the edge sets of $G_1,G_2,G_3$ and $C_3$. So, we say that $G$ decomposes into three subgraphs $G_1,G_2,G_3$ and the triangle graph $C_3$ as in Figure \ref{figure1}(b). For each $G_i$, we will continue to decompose it. We remove the dashed edges from $G_i$. Then, we obtain $7$ connected components which include $6$ isolated vertices and the complete graph $K_6$ as in Figure \ref{figure1}(c). In $G_i$, we shrink that complete graph $K_6$ to a single vertex to construct the shrunk graph which is, in this case, the wheel graph $W_7$. That means each $G_i$ decomposes into 6 isolated vertices, the complete graph $K_6$ and the wheel graph $W_7$ as in Figure \ref{figure1}(c). The deflation process stops here since all the created subgraphs (isolated vertices, $C_3$, $K_6$ and $W_7$) are homogeneous.

\section{Fulkerson blocker family of the spanning tree family}\label{sec:ful}

In this section, we present an alternative approach to determining the Fulkerson blocker for spanning tree families, distinct from the method in \cite{chopraon}.
\begin{theorem}\label{theo:fulkerson}
	Let $G=(V,E)$ be a connected, unweighted, undirected multigraph with no self-loops. Let $\Gamma$ be the spanning tree family of $G$ with usage vectors given by the indicator functions. Let $\Phi$ be the set of all feasible partitions of $G$. For each feasible partition $P= \left\{ V_1,V_2,...,V_{k_P} \right\}$ with $k_P \geq 2$, let $E_P$ be the cut set of $P$ which is set of all edges in $G$ that connect vertices that belong to different $V_i$. Then, the Fulkerson blocker family $\widehat{\Gamma}$ is the set of all
	vectors 
	\begin{align*}
		\frac{1}{k_P-1}\mathbbm{1}_{E_P},
	\end{align*}
	for all $P \in \Phi$ whose shrunk graph $G_P$ is vertex-biconnected.
\end{theorem}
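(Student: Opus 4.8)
The plan is to start from the ``coarse'' duality and refine it by a direct study of extreme points. By Theorem~\ref{theo:chopra}, $\Phi$ is a Fulkerson dual family of $\Gamma$, so Proposition~\ref{theo:smallestF} gives $\widehat{\Gamma}\subseteq\Phi$; equivalently, every element of $\widehat{\Gamma}$ has the form $\eta_P:=\frac{1}{k_P-1}\mathbbm{1}_{E_P}$ for some feasible partition $P$ (and indeed $\eta_P\in\Adm(\Gamma)$ for every feasible $P$, since $|\gamma\cap E_P|\ge k_P-1$ for each spanning tree $\gamma$). It therefore remains to decide, among feasible partitions $P$, which $\eta_P$ are extreme points of $\Adm(\Gamma)$, and to show this happens exactly when $G_P$ is vertex-biconnected. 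I would prove the two implications separately. The Nash--Williams/Tutte input is already absorbed into Theorem~\ref{theo:chopra} (re-derived earlier via the strength--tree-packing identity), so the present argument is purely polyhedral and graph-theoretic.

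\textbf{Not biconnected $\Rightarrow$ $\eta_P\notin\widehat{\Gamma}$.} Since $G_P$ is connected with $\ge 2$ vertices, if it fails to be vertex-biconnected (Definition~\ref{def:vertex-biconnected}) then it has $\ge 3$ vertices and a cut vertex, say the one coming from $V_1$. Writing the vertex set of $G_P-V_1$ as $A\sqcup B$ with $A,B\ne\varnothing$ and no $A$--$B$ edges, I would form the feasible partitions of $G$
\[
P^1:=\{V_i:v_i\in A\}\cup\bigl\{\,V_1\cup\textstyle\bigcup_{v_j\in B}V_j\,\bigr\},\qquad
P^2:=\{V_j:v_j\in B\}\cup\bigl\{\,V_1\cup\textstyle\bigcup_{v_i\in A}V_i\,\bigr\},
\]
which are feasible because $\{v_1\}\cup A$ and $\{v_1\}\cup B$ induce connected subgraphs of $G_P$. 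Using the absence of $A$--$B$ edges one checks $E_P=E_{P^1}\sqcup E_{P^2}$ and $k_P-1=(k_{P^1}-1)+(k_{P^2}-1)$, with $k_{P^1}-1=|A|\ge 1$ and $k_{P^2}-1=|B|\ge 1$. Hence $\eta_P=\lambda\,\eta_{P^1}+(1-\lambda)\,\eta_{P^2}$ with $\lambda=(k_{P^1}-1)/(k_P-1)\in(0,1)$, where $\eta_{P^1},\eta_{P^2}\in\Phi\subseteq\Adm(\Gamma)$ are distinct (disjoint nonempty supports). So $\eta_P$ is not extreme, i.e.\ $\eta_P\notin\widehat{\Gamma}$.

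\textbf{Biconnected $\Rightarrow$ $\eta_P\in\widehat{\Gamma}$.} Suppose $\eta_P=\frac12(\eta'+\eta'')$ with $\eta',\eta''\in\Adm(\Gamma)$. On $E\setminus E_P$ we have $\eta_P=0$ and $\eta',\eta''\ge 0$, so $\eta'=\eta''=0$ there. For any $\gamma\in\Gamma^P$ (a spanning tree of $G$ whose restriction to $E_P$ is a spanning tree of $G_P$) we have $\ell_{\eta_P}(\gamma)=|\gamma\cap E_P|/(k_P-1)=1$, while $\ell_{\eta'}(\gamma),\ell_{\eta''}(\gamma)\ge 1$ and average to $1$; hence both equal $1$, i.e.\ $\eta'-\eta_P$ (which is supported on $E_P$) is orthogonal to $\mathbbm{1}_{\gamma\cap E_P}$ for every $\gamma\in\Gamma^P$, and likewise $\eta''-\eta_P$. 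By Theorem~\ref{par} the vectors $\mathbbm{1}_{\gamma\cap E_P}$, $\gamma\in\Gamma^P$, are exactly the incidence vectors of the spanning trees of $G_P$ (identifying $E_P$ with $E(G_P)$), so it suffices to know that these incidence vectors span $\R^{E(G_P)}$. Granting that, $\eta'-\eta_P$ is orthogonal to all of $\R^{E_P}$, forcing $\eta'=\eta_P$ and similarly $\eta''=\eta_P$; thus $\eta_P$ is extreme, i.e.\ $\eta_P\in\widehat{\Gamma}$.

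\textbf{The spanning lemma, and where the work is.} What remains, and what I expect to be the main obstacle, is the linear-algebra statement: the spanning-tree incidence vectors of a vertex-biconnected multigraph $H$ span $\R^{E(H)}$. If $|V(H)|=2$ this is clear, since the spanning trees are the single edges. If $|V(H)|\ge 3$ then $H$ is $2$-connected, so any two distinct edges $a,b$ lie on a common cycle $C$; extending the path $C-a$ to a spanning tree $T$ of $H$ that avoids $a$ makes $C$ the fundamental cycle of $a$ with respect to $T$, whence $T+a-b$ is again a spanning tree and $\mathbbm{1}_T-\mathbbm{1}_{T+a-b}=\delta_b-\delta_a$ lies in the span. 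So every difference $\delta_a-\delta_b$ lies in the span, and adding any single $\mathbbm{1}_{T_0}$ (whose coordinates sum to $|V(H)|-1\ne 0$) yields all of $\R^{E(H)}$. The points I expect to need care are: checking feasibility of $P^1,P^2$ and that their cut sets partition $E_P$ in the first implication; and, in the second, the uniform treatment of the degenerate case $|V(G_P)|=2$ (where $E_P$ may reduce to a single bridge of $G$, and the lemma still applies trivially).
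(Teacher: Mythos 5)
Your proof is correct, but it is only partly the paper's route, and the part where it diverges is exactly the part the paper regards as its contribution. Your second and third steps essentially reproduce the paper's Claims~\ref{claim2} and~\ref{claim1}: the articulation-point splitting with $E_P=E_{P^1}\sqcup E_{P^2}$ and $k_P-1=(k_{P^1}-1)+(k_{P^2}-1)$ is the convex decomposition (\ref{convex}), and your ``spanning lemma'' is a linear-algebra repackaging of Lemma~\ref{lemma}: the pair $T$, $T+a-b$ is precisely the pair $\ga_1,\ga_2$ there, and orthogonality of $\eta'-\eta_P$ to the differences $\de_b-\de_a$ is the same computation by which the paper shows any admissible density in a midpoint decomposition must be constant on $E_P$ (note the paper actually proves the ``two edges lie on a common cycle'' fact you assert, by subdividing $e_1,e_2$ and invoking Corollary~\ref{cycle}; also, your extension of $C-a$ to a spanning tree avoiding $a$ needs no separate care, since a tree containing the path $C-a$ cannot contain $a$ without containing the cycle $C$). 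The genuine difference is your first step: you get $\widehat{\Ga}\subset\Phi$ from Chopra's Theorem~\ref{theo:chopra} together with Proposition~\ref{theo:smallestF}, which is logically valid (it is exactly the observation (\ref{fact:1})), but it imports the external result of \cite{chopraon} that Section~\ref{sec:ful} is explicitly designed to avoid; the paper instead proves this inclusion independently (Claim~\ref{claim:1}) via the identity $S_{\si}(G)=\Mod_{1,\si}(\Ga)$ of Proposition~\ref{sm}, which rests on the Nash--Williams--Tutte theorem, combined with Lemma~\ref{lem:vertex}. Your parenthetical that the Nash--Williams/Tutte input is ``absorbed into Theorem~\ref{theo:chopra} (re-derived earlier)'' misreads the paper's structure: Theorem~\ref{theo:chopra} is quoted, not re-derived; what is re-derived is the inclusion $\widehat{\Ga}\subset\Phi$ itself. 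So your argument does prove the stated theorem and finishes the biconnected case slightly more slickly (spanning of the tree incidence vectors forces $\eta'=\eta_P$ in one stroke), while the paper's route buys a description of the Fulkerson blocker that is independent of \cite{chopraon}.
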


Let $\Theta \subset \Phi$ be the family of all feasible partitions $P$ whose shrunk graph $G_P$ is vertex-biconnected, with usage given as in (\ref{usage2}). In Section \ref{sec:firsth}, we reprove the statement (\ref{fact:1}) using a different approach, which is based on a result of Nash-Williams and Tutte \cite{edge-disjoint,on}. For completeness, in Section \ref{sec:secondhalf}, we reprove the following statements in detail
\begin{equation}\label{fact:2}
	\Theta \subset \Gahat,
	\qquad\text{and}\qquad
	\Phi \cap \Gahat \subset \Theta,
\end{equation}
using ideas from \cite{chopraon}, but in the language of extreme points. Finally, using (\ref{fact:1}) and (\ref{fact:2}), we obtain (\ref{fact:0}).

\subsection{The strength of graphs and 1-modulus} \label{sec:firsth}
Let $G= (V,E,\si)$ be a weighted connected multigraph with edge weights $\si \in \R^E_{>0}$. Let $\Ga = \Ga_G$ be the spanning tree family of $G$.  Let $\Phi$ be the family of all feasible partitions $P$ of $G$, which we identify with the set of usage vectors in $\R_{\ge 0}^E$, as defined in (\ref{usage2}). Let $\Gahat$ be the Fulkerson blocker dual family for $\Ga$. In this section,  we establish the equality between the strength of $G$ and the 1-modulus of $\Ga$, using a result of Nash-Williams and Tutte \cite{edge-disjoint}, \cite{on}. With this connection, we give a different proof for the claim in Equation (\ref{fact:1}).
\begin{claim} \label{claim:1} We have  $\widehat{\Gamma} \subset \Phi$. 
\end{claim}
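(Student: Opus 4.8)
The plan is to show $\widehat{\Gamma}\subset\Phi$ by proving that the admissible set $\Adm(\Phi)$ coincides with the admissible set $\Adm(\widehat{\Gamma}) = \Dom(\Gamma)$; once $\Phi$ is a Fulkerson dual family of $\Gamma$, the inclusion $\widehat{\Gamma}\subset\Phi$ follows immediately from Proposition~\ref{theo:smallestF}. The inclusion $\Adm(\Phi)\supset\Adm(\widehat{\Gamma})$ is the easy direction: every feasible partition $P$ has the property that each spanning tree $\gamma$ satisfies $|E_P\cap\gamma|\ge k_P-1$, so for $\rho\in\Adm(\Gamma)$ one checks that $\widetilde{\cN}(P,\cdot)\rho = \frac{1}{k_P-1}\sum_{e\in E_P}\rho(e) \ge 1$ by averaging over the edges of a tree (alternatively, by the standard fact $\mathbf{1}_{E_P}\ge \sum_i \lambda_i \mathbf{1}_{\gamma_i}$ for a suitable convex combination, which is itself a consequence of the Nash-Williams--Tutte theorem). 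The substantive direction is $\Adm(\Phi)\subset\Adm(\Gamma)$, equivalently: if $\rho\ge 0$ satisfies $\sum_{e\in E_P}\rho(e)\ge k_P-1$ for every feasible partition $P$, then $\ell_\rho(\gamma) = \sum_{e\in\gamma}\rho(e)\ge 1$ for every spanning tree $\gamma$.

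The key step is to reduce this to the integer version via scaling and then invoke Theorem~\ref{TutteN} (Nash-Williams--Tutte). The idea is: it suffices to check admissibility against $\Gamma$, i.e. that $\min_{\gamma}\ell_\rho(\gamma)\ge 1$; since $1$-modulus is a linear program and $\min_{\gamma}\ell_\rho(\gamma) = \Mod_{1,\rho}(\Gamma)$ is, after the usual duality, the tree-packing value with capacities $\rho$, the content of Claim~\ref{claim:1} is exactly that the tree-packing value equals the strength $S_\rho(G) = \min_{P}\frac{\rho(E_P)}{k_P-1}$. So I would first establish: for any weights $\sigma\in\R^E_{>0}$, the optimal value of the tree-packing LP~(\ref{dualr}) equals $S_\sigma(G)$. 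For rational $\sigma$, clear denominators and appeal to the integer Nash-Williams--Tutte theorem applied to the multigraph in which edge $e$ is replaced by $N\sigma(e)$ parallel copies: that graph has $k$ edge-disjoint spanning trees iff every partition cuts at least $k(k_P-1)$ of the replicated edges, i.e. iff $k\le \min_P \frac{N\sigma(E_P)}{k_P-1} = N\,S_\sigma(G)$; dividing by $N$ and passing to the fractional packing gives the claim for rational $\sigma$, and then continuity of both sides in $\sigma$ extends it to all $\sigma\in\R^E_{>0}$. Feeding $\sigma=\rho$ into this identity: the hypothesis $\rho\in\Adm(\Phi)$ says $S_\rho(G)\ge 1$, hence the tree-packing value is $\ge 1$, hence by LP duality $\min_\gamma \ell_\rho(\gamma)\ge 1$, i.e. $\rho\in\Adm(\Gamma)$.

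With $\Adm(\Phi)=\Adm(\Gamma) = \BL(\Adm(\Gamma))$'s blocker-of-the-blocker... more precisely with $\Adm(\Phi)=\Dom(\Gamma) = \BL(\Adm(\Gamma))$, the pair $(\Gamma,\Phi)$ is a Fulkerson dual pair in the sense of the definition in Section~\ref{sec:pairs}, and Proposition~\ref{theo:smallestF} yields $\widehat{\Gamma}\subset\Phi$, which is Claim~\ref{claim:1}. (This also reproves Theorem~\ref{theo:chopra} along the way, via a route independent of Chopra's minimum-spanning-tree argument.)

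The main obstacle I anticipate is the careful handling of the reduction from the weighted/fractional statement to the integer Nash-Williams--Tutte theorem: one must verify that replacing edge $e$ by $N\sigma(e)$ parallel copies does not change the set of feasible partitions nor the relevant cut counts (it does not, since parallel edges never cross a partition differently from the original edge, and self-loops are irrelevant), and one must correctly pass between the edge-disjoint-spanning-tree count and the fractional tree-packing value, then take the limit $N\to\infty$ and use density of the rationals together with the (piecewise-linear, hence continuous) dependence of $S_\sigma(G)$ and of the packing LP value on $\sigma$. The hypergraph counterexample in the introduction shows this reduction genuinely uses that $G$ is a graph, so one should not expect a shortcut that bypasses Nash-Williams--Tutte.
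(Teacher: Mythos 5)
Your overall strategy (prove that $\Phi$ is a Fulkerson dual family of $\Gamma$ via the Nash-Williams--Tutte-based identity ``fractional tree packing $=$ strength'', then invoke Proposition~\ref{theo:smallestF}) is viable, and your scaling/continuity outline for $S_\sigma(G)=\Mod_{1,\sigma}(\Gamma)$ is essentially Proposition~\ref{sm}. But the step you call the substantive direction is wrong in two ways. First, to make $(\Gamma,\Phi)$ a Fulkerson dual pair you need $\Adm(\Phi)\subset\Adm(\widehat{\Gamma})=\Dom(\Gamma)$, not $\Adm(\Phi)\subset\Adm(\Gamma)$; the latter is a different statement and is false in general: if $\gamma_1,\gamma_2$ are two edge-disjoint spanning trees of $K_4$, then $\rho=\mathbbm{1}_{\gamma_1}$ satisfies $\rho(E_P)=|E_P\cap\gamma_1|\ge k_P-1$ for every feasible partition $P$, so $\rho\in\Adm(\Phi)$, yet $\ell_\rho(\gamma_2)=0<1$. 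Second, the LP-duality step you use to argue it conflates two different quantities: the LP dual of the tree-packing problem with capacities $\rho$ is $\Mod_{1,\rho}(\Gamma)=\min\{\rho^T x: x\in\Adm(\Gamma)\}=S_\rho(G)$, which is \emph{not} $\min_\gamma\ell_\rho(\gamma)$ (on the triangle with $\rho\equiv 1$ these are $3/2$ and $2$). So ``packing value $\ge 1$'' does not yield $\min_\gamma\ell_\rho(\gamma)\ge 1$. (Your easy direction is also garbled: to show $\Adm(\widehat\Gamma)\subset\Adm(\Phi)$ you should take $\eta\in\Dom(\Gamma)$, i.e. $\eta\ge\sum_i\lambda_i\mathbbm{1}_{\gamma_i}$ with $\sum_i\lambda_i=1$, and use $|E_P\cap\gamma_i|\ge k_P-1$; no averaging over tree edges, and no inequality of the form $\mathbbm{1}_{E_P}\ge\sum_i\lambda_i\mathbbm{1}_{\gamma_i}$, is involved.)

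The repair along your route is short: if $\rho\in\Adm(\Phi)$ then $S_\rho(G)\ge 1$, so by packing $=$ strength the packing LP with capacities $\rho$ has an optimal $\lambda\ge 0$ with $\sum_\gamma\lambda(\gamma)\ge 1$ and $\sum_{\gamma\ni e}\lambda(\gamma)\le\rho(e)$; normalizing $\lambda$ to total mass $1$ exhibits $\rho$ as dominating a convex combination of tree indicators, i.e. $\rho\in\Dom(\Gamma)=\Adm(\widehat\Gamma)$. With that fix you would in effect reprove Theorem~\ref{theo:chopra} and then obtain the claim from Proposition~\ref{theo:smallestF}. Note that this endgame is different from, and heavier than, the paper's: the paper does not establish $\Adm(\Phi)=\Dom(\Gamma)$ at this point; instead it uses Lemma~\ref{lem:vertex} to choose, for each $\widehat\gamma\in\widehat\Gamma$, a weight $\sigma$ making $\widehat\gamma$ the unique optimizer of $\Mod_{1,\sigma}(\Gamma)$, and then Proposition~\ref{sm} shows that a critical partition vector $\frac{1}{k_P-1}\mathbbm{1}_{E_P}$ is also optimal, forcing $\widehat\gamma=\frac{1}{k_P-1}\mathbbm{1}_{E_P}\in\Phi$.
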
 
We first recall the 1-modulus problem $\Mod_{1,\si}(\Ga)$:

\begin{equation}
	\begin{array}{ll}
		\underset{\rho \in \R^{E}}{\text{minimize}}    &\si^T\rho \\
		\text{subject to } &\sum\limits_{e \in E} \cN(\ga,e)\rho(e) \geq 1, \quad \forall \ga \in \Ga; \\
		&\rho \geq 0.  	 
	\end{array}
\end{equation}
The dual problem of $\Mod_{1,\si}(\Ga)$ is described in \cite{modulus} as follows,
\begin{equation}  \label{dualrrr}
	\begin{array}{ll}
		\underset{\lambda \in \R^{\Ga}}{\text{maximize}}    &\lambda^T\mathbf{1} \\
		\text{subject to } &\sum\limits_{e \in \ga} \lambda(\ga) \leq \sigma(e), \quad \forall e \in E; \\
		&\lambda \geq 0, 	 
	\end{array}
\end{equation}
where $\one$ is the column vector of all ones.
\begin{remark}\label{remarkdis}
	When $\si(e)=1$ for all $e \in E$ and $\lambda \in \Z^{\Ga}$, the value of the dual problem (\ref{dualrrr}) is equal to the maximum number of disjoint spanning trees in the graph $G$.
\end{remark}
We recall the strength of the graph $G$, is given by
$
S_{\si}(G) = \min\limits_{P \in \Phi} \frac{\si(E_P)}{k_P-1}$.

\begin{remark}\label{rsi} The strength of $G$ is $1$-homogeneous with respect to $\si$, meaning that $S_{r\si}(G)=rS_{\si}(G)$ for any positive real number $r$.
\end{remark}

\begin{lemma}\label{continuity}
	Let $G= (V,E,\si)$ be a weighted connected graph with edge weights $\si \in \R^E_{>0}$. Then, the function $\si \mapsto S_{\si}(G)$ is Lipschitz continuous and increasing along the coordinate directions.
\end{lemma}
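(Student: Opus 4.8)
The plan is to observe that, by Definition~\ref{def:strength-pb} (equation~\eqref{strength}), $S_\sigma(G)$ is the pointwise minimum over the \emph{finite} family $\Phi$ of feasible partitions of the linear functionals
\[
\sigma \longmapsto f_P(\sigma) := \frac{\sigma(E_P)}{k_P-1} = \Bigl\langle \tfrac{1}{k_P-1}\mathbbm{1}_{E_P},\, \sigma\Bigr\rangle, \qquad P \in \Phi,
\]
and then to read off both asserted properties from elementary stability properties of a finite minimum of linear maps. In particular, $\sigma \mapsto S_\sigma(G)$ is concave and piecewise linear; finiteness of $\Phi$ is precisely what makes the minimum attained and, more importantly, lets us use a Lipschitz constant that is uniform over $P$.

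For Lipschitz continuity, I would first note that each $f_P$ is $1$-Lipschitz with respect to the $\ell^1$-norm on $\R^E$: since $k_P - 1 \ge 1$,
\[
\bigl|f_P(\sigma) - f_P(\sigma')\bigr| = \frac{1}{k_P-1}\Bigl|\sum_{e\in E_P}\bigl(\sigma(e)-\sigma'(e)\bigr)\Bigr| \le \sum_{e\in E}\bigl|\sigma(e)-\sigma'(e)\bigr| = \|\sigma-\sigma'\|_1 .
\]
Then the standard fact that a pointwise minimum of $L$-Lipschitz functions is again $L$-Lipschitz applies: for any $P\in\Phi$, $S_\sigma(G) \le f_P(\sigma) \le f_P(\sigma') + \|\sigma-\sigma'\|_1$, and minimizing the right-hand side over $P\in\Phi$ yields $S_\sigma(G) \le S_{\sigma'}(G) + \|\sigma-\sigma'\|_1$; exchanging the roles of $\sigma$ and $\sigma'$ gives $|S_\sigma(G) - S_{\sigma'}(G)| \le \|\sigma-\sigma'\|_1$. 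Since all norms on the finite-dimensional space $\R^E$ are equivalent, this shows $\sigma \mapsto S_\sigma(G)$ is Lipschitz continuous with respect to any norm.

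For monotonicity along the coordinate directions, I would use that each coefficient vector $\frac{1}{k_P-1}\mathbbm{1}_{E_P}$ lies in $\R^E_{\ge 0}$. Hence if $\sigma \le \sigma'$ coordinatewise, then $f_P(\sigma) \le f_P(\sigma')$ for every $P\in\Phi$, and taking minima gives $S_\sigma(G) \le S_{\sigma'}(G)$; in particular, increasing a single weight $\sigma(e_0)$ while keeping the others fixed cannot decrease $S_\sigma(G)$. I do not expect a genuine obstacle here: the entire statement follows from $S_\bullet(G)$ being a concave, piecewise-linear function of $\sigma$ whose linear pieces have nonnegative gradients, and the only point requiring (minimal) care is that $\Phi$ is finite, so that the minimum of the $f_P$ is well defined and enjoys a uniform Lipschitz bound.
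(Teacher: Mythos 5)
Your proposal is correct and follows essentially the same route as the paper: both arguments treat $S_\sigma(G)$ as the minimum over feasible partitions of the linear functionals $\sigma \mapsto \sigma(E_P)/(k_P-1)$, bound the difference using the $1$-Lipschitz estimate $\frac{1}{k_P-1}\sum_{e\in E_P}|\sigma_2(e)-\sigma_1(e)| \le \|\sigma_2-\sigma_1\|_1$ (the paper just phrases this via the critical partition for the smaller value rather than via the generic ``minimum of Lipschitz functions is Lipschitz'' lemma), and obtain monotonicity from the nonnegativity of the coefficient vectors exactly as you do.
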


\begin{proof}
	Let $ \si_1, \si_2\in \R^E_{>0}$. Let $P_1$ and $P_2$ be  critical partitions for $S_{\si_1}(G)$ and $S_{\si_2}(G)$, respectively. Without loss of generality, we assume that $S_{\si_1}(G) \leq S_{\si_2}(G)$. Then,
	
	\begin{align*}
		S_{\si_2}(G) - S_{\si_1}(G) &\leq \frac{\si_2(E_{P_1})}{k_{P_1}-1} - \frac{\si_1(E_{P_1})}{k_{P_1}-1} = \frac{1}{k_{P_1}-1}\sum\limits_{e \in E_{P_1}} (\si_2(e)-\si_1(e)) \\
		&   \leq  \frac{1}{k_{P_1}-1} \sum\limits_{e \in E_{P_1}} |\si_2(e)-\si_1(e)|\\
		&\leq \sum\limits_{e \in E_{P_1}} |\si_2(e)-\si_1(e)| \leq \sum\limits_{e \in E} |\si_2(e)-\si_1(e)|  = \Vert \si_2-\si_1 \Vert_1.
	\end{align*}
	where $\Vert \cdot \Vert_1$ is the 1-norm on $\R^E$. Note that all norms on $\R^E$ are equivalent.
	
	Assume that $\si \leq \si',$ then $\si(A) \leq \si'(A)$ for all $A \subset E.$ Hence, 
	\[ \frac{\si(E_{P})}{k_{P}-1} \leq \frac{\si'(E_{P})}{k_{P}-1}, \] for any feasible partition $P$. Therefore, $S_{\si}(G) \leq S_{\si'}(G)$. In conclusion, the function $\si \mapsto S_{\si}(G)$ is Lipschitz continuous and increasing along the coordinate directions.
\end{proof}
The following theorem shows that $S_{\si}(G)$ is the same as  $\Mod_{1,\si}(\Ga)$. 
\begin{proposition}\label{sm}
	Let $G= (V,E,\si)$ be a weighted connected graph with edge weights $\si \in \R^E_{>0}$. Let $\Ga = \Ga_G$ be the spanning tree family of $G$. Let $S_{\si}(G)$ be the strength of $G$. Then,
	
	\begin{equation}
		S_{\si}(G) = \Mod_{1,\si}(\Ga).
	\end{equation}
\end{proposition}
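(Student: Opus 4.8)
The plan is to prove the two inequalities $S_{\si}(G) \le \Mod_{1,\si}(\Ga)$ and $S_{\si}(G) \ge \Mod_{1,\si}(\Ga)$ separately, the first being elementary and the second being where the Nash--Williams--Tutte theorem enters.

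For the easy direction $S_{\si}(G) \le \Mod_{1,\si}(\Ga)$, I would produce, from an arbitrary feasible partition $P \in \Phi$, an admissible density for $\Ga$ of small energy. Indeed, recall that for every spanning tree $\ga$ and every feasible partition $P$ one has $|E_P \cap \ga| \ge k_P - 1$, since the restriction of $\ga$ to $E_P$ is a connected spanning subgraph of the shrunk graph $G_P$ on $k_P$ vertices. Hence $\rho_P := \frac{1}{k_P-1}\ones_{E_P}$ satisfies $\ell_{\rho_P}(\ga) \ge 1$ for all $\ga \in \Ga$, so $\rho_P \in \Adm(\Ga)$, and its $1$-energy is $\si^T\rho_P = \frac{\si(E_P)}{k_P-1}$. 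Minimizing over $P \in \Phi$ gives $\Mod_{1,\si}(\Ga) \le S_{\si}(G)$. (One may alternatively phrase this via $\Gahat \subset \Phi$ once that is available, but proving that inclusion is precisely the payoff of this section, so I would keep this direction self-contained.)

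For the reverse direction $S_{\si}(G) \ge \Mod_{1,\si}(\Ga)$, the idea is to use LP duality: $\Mod_{1,\si}(\Ga)$ equals the optimal value of the tree-packing program \eqref{dualrrr}, namely the maximum of $\lambda^T\one$ over $\lambda \ge 0$ with $\sum_{\ga \ni e}\lambda(\ga) \le \si(e)$ for all $e$. So it suffices to exhibit a feasible $\lambda$ with $\lambda^T\one \ge S_{\si}(G)$, or rather to show the fractional tree-packing number is at least $S_\si(G)$. This is exactly the content of the Nash--Williams--Tutte theorem in its fractional/weighted form: $G$ contains $k$ edge-disjoint spanning trees iff every feasible partition $P$ satisfies $|E_P| \ge k(k_P - 1)$. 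I would first establish the statement for \emph{integer} weights $\si \in \Z^E_{>0}$ by applying the integer Nash--Williams--Tutte theorem (Theorem~\ref{TutteN}) to the multigraph obtained by replacing each edge $e$ with $\si(e)$ parallel copies: a feasible partition of this blown-up graph corresponds to a feasible partition $P$ of $G$ with cut weight $\si(E_P)$, and the integer tree-packing number of the blown-up graph is $\lfloor S_\si(G)\rfloor$, which for integer $\si$ is bounded below by $\Mod_{1,\si}(\Ga)$ appropriately — actually I would argue the cleaner route: scale $\si$ by a large integer $N$, apply the integer theorem to get a packing of $\lfloor S_{N\si}(G)\rfloor = \lfloor N S_\si(G)\rfloor$ edge-disjoint spanning trees in the $N$-blown-up graph, normalize by $N$ to get a feasible fractional $\lambda$ for the original \eqref{dualrrr} of value $\lfloor N S_\si(G)\rfloor / N$, and let $N \to \infty$ using Remark~\ref{rsi} ($1$-homogeneity of $S_\si$) so that $\Mod_{1,\si}(\Ga) \ge S_\si(G)$ for all rational $\si$. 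Then extend to arbitrary $\si \in \R^E_{>0}$ by the Lipschitz continuity of $\si \mapsto S_\si(G)$ from Lemma~\ref{continuity}, together with continuity of $\si \mapsto \Mod_{1,\si}(\Ga)$ (which is a finite LP, hence continuous in the objective vector).

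The main obstacle I anticipate is the bookkeeping in translating the integer Nash--Williams--Tutte theorem about edge-disjoint spanning trees into the fractional/weighted tree-packing value via the edge-blow-up construction, and in particular verifying carefully that feasible partitions of the blown-up multigraph biject with feasible partitions of $G$ with the expected cut-weights, and that the floor function incurred by the integer theorem washes out in the $N\to\infty$ limit. The rest — the continuity extension and the easy inequality — should be routine given Lemma~\ref{continuity} and the basic theory of finite linear programs already invoked in the excerpt.
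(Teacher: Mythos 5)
Your proposal is correct, and its skeleton matches the paper's: reduce to rational weights by continuity (Lemma \ref{continuity} and continuity of $\Mod_{1,\si}$) and $1$-homogeneity, blow each edge $e$ up into an integer number of parallel copies, invoke Nash--Williams--Tutte (in its feasible-partition form, which is Proposition \ref{Tutte} rather than Theorem \ref{TutteN} verbatim; the paper proves that reduction), and pass to $\Mod_{1,\si}(\Ga)$ via LP duality. Where you genuinely diverge is in how the integrality gap is closed. You split the statement into two inequalities: the easy bound $\Mod_{1,\si}(\Ga)\le S_\si(G)$ via admissibility of $\frac{1}{k_P-1}\ones_{E_P}$, and the hard bound by normalizing a packing of $\lfloor N S_\si(G)\rfloor$ edge-disjoint trees in the $N$-blown-up graph and letting $N\to\infty$ (with $N$ running over multiples of the common denominator so $N\si\in\Z^E$), so the floor washes out in the limit. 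The paper instead obtains exact equality at rational $\si$ in one stroke, with no limit: it takes a rational optimal solution $\la^*$ of the dual LP, scales by $h$ so that both $h\si$ and $h\la^*$ are integral (making the integer packing value equal to the scaled LP value), and then scales further by $\kappa=k_{P^*}-1$ for a critical partition $P^*$ so that $S_{\kappa h\si}(G)$ is an integer and the floor in Lemma \ref{smz} is exact; combining gives $\kappa h S_\si(G)=\kappa h\Mod_{1,\si}(\Ga)$. Your route is conceptually a bit simpler (no appeal to rationality of an optimal dual solution, no $\kappa$ trick, and the easy direction is self-contained), at the price of an extra limiting argument inside the rational case; the paper's route buys an exact identity for each rational $\si$ and isolates the combinatorial bookkeeping of the blow-up in Lemma \ref{smz}, which is exactly the place where you should verify that feasible partitions of the blown-up multigraph biject with those of $G$ with cut weight $N\si(E_P)$, so that its strength is $N S_\si(G)$.
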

By the continuity of  $\Mod_{1,\si}(\Ga)$ \cite[Lemma 6.4]{modulus} and $S_{\si}(G)$ (Lemma \ref{continuity}) with respect to $\si$,  it is enough to  prove Proposition \ref{sm} in the case where $\si \in \Q^E$. By homogeneity of both $\Mod_{1,\si}(\Ga)$ and $S_{\si}(G)$, we may consider the case $\si \in \Z^E$.
First, we state a result of Tutte \cite{on}, this result was independently
proved by Nash-Williams \cite{edge-disjoint}.

\begin{theorem}[\cite{edge-disjoint,on}] \label{TutteN} Let $G=(V,E)$ be a connected graph. Then, $G$ has $k$ disjoint spanning trees if and only if, for any partition $P = \left\{ V_1,V_2,...,V_{k_P} \right\}$ of the node set $V$, with $k_P \geq 2$ and with cut-set $E_P $, we have
	\begin{equation}\label{ktrees2}
		\frac{|E_P|}{k_P-1} \geq k.
	\end{equation}
\end{theorem}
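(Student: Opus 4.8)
The statement to be established is the classical Nash--Williams--Tutte theorem, so the plan is to assemble its two standard ingredients: a one-line counting argument for the ``only if'' direction, and the matroid union theorem for the ``if'' direction. \textbf{The ``only if'' direction.} Suppose $G$ contains $k$ pairwise edge-disjoint spanning trees $T_1,\dots,T_k$, and fix any partition $P=\{V_1,\dots,V_{k_P}\}$ of $V$ with $k_P\ge 2$. For each $i$, identifying each class $V_j$ to a single vertex turns $T_i$ into a connected spanning multigraph on $k_P$ vertices whose edge set is precisely $T_i\cap E_P$; hence $|T_i\cap E_P|\ge k_P-1$. Since the $T_i$ are edge-disjoint, $|E_P|\ge\sum_{i=1}^{k}|T_i\cap E_P|\ge k(k_P-1)$, which is \eqref{ktrees2}. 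This is exactly the observation already recorded in Section~\ref{sec-feasible-spanning} that the restriction of a spanning tree to $E_P$ spans the shrunk graph $G_P$.

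\textbf{The ``if'' direction.} Let $M=M(G)$ be the cycle matroid of $G$ on ground set $E$, with rank function $r(F)=|V|-c(V,F)$, where $c(V,F)$ is the number of connected components of the spanning subgraph $(V,F)$. Because $G$ is connected, a base of $M$ is a spanning tree of $G$, and since $k$ independent sets of $M$ have total size at most $k(|V|-1)$, the graph $G$ has $k$ edge-disjoint spanning trees if and only if the $k$-fold matroid union $M^{(k)}=M\vee\cdots\vee M$ attains rank $k(|V|-1)$ on $E$. By the matroid union theorem (Nash--Williams, Edmonds),
\[
r_{M^{(k)}}(E)=\min_{F\subseteq E}\bigl(|E\setminus F|+k\,r(F)\bigr),
\]
and substituting $r(F)=|V|-c(V,F)$ shows that $r_{M^{(k)}}(E)=k(|V|-1)$ is equivalent to the family of inequalities $|E\setminus F|\ge k\bigl(c(V,F)-1\bigr)$ holding for all $F\subseteq E$. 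Given such an $F$, let $P$ be the partition of $V$ into the connected components of $(V,F)$, so that $k_P=c(V,F)$ and every edge joining two distinct components lies outside $F$, i.e. $E_P\subseteq E\setminus F$. If $k_P=1$ the inequality is vacuous; if $k_P\ge 2$, then \eqref{ktrees2} gives $|E\setminus F|\ge|E_P|\ge k(k_P-1)=k(c(V,F)-1)$. Thus all the required inequalities hold, $M^{(k)}$ attains rank $k(|V|-1)$ on $E$, and the desired $k$ edge-disjoint spanning trees exist.

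\textbf{The main obstacle.} The real content is the matroid union theorem itself (equivalently, Nash--Williams' formula for the maximum number of disjoint bases of a matroid), whose proof is a matroid augmentation argument; I would cite it rather than reprove it. A self-contained alternative, which avoids matroid language, is to run the exchange argument directly for graphic matroids: take $k$ edge-disjoint spanning forests of maximum total size, assume they do not already form $k$ spanning trees, and use alternating ``forest-exchange'' paths to either enlarge the total size or to exhibit a partition $P$ violating \eqref{ktrees2}. The delicate point in that route is the bookkeeping that tracks, along such a path, which forest each edge currently belongs to, so that the exchange keeps the forests edge-disjoint. For the purposes of this paper it is cleanest to invoke the matroid union theorem and to record only the two short reductions above.
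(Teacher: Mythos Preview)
Your argument is correct. The ``only if'' direction is the standard counting, and the ``if'' direction via the matroid union formula $r_{M^{(k)}}(E)=\min_{F\subseteq E}\bigl(|E\setminus F|+k\,r(F)\bigr)$ is a legitimate and clean route: taking $F=E$ shows the minimum is at most $k(|V|-1)$, and your reduction (pass from an arbitrary $F$ to the partition into components of $(V,F)$, then invoke \eqref{ktrees2}) establishes the reverse inequality. One small remark: the partition $P$ you build from $(V,F)$ is automatically feasible, but you do not need that here since Theorem~\ref{TutteN} is stated for \emph{all} partitions; it is only later, in Proposition~\ref{Tutte}, that the paper restricts to feasible ones.

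As for comparison with the paper: there is nothing to compare. The paper does not prove Theorem~\ref{TutteN}; it is quoted as a classical result of Nash--Williams and Tutte and used as a black box (the paper's own contribution in that section is the passage from Theorem~\ref{TutteN} to Proposition~\ref{Tutte}, Corollary~\ref{coro1}, and Lemma~\ref{smz}, en route to Proposition~\ref{sm}). Your write-up therefore goes beyond what the paper itself supplies. If you want to keep it self-contained, you are right that the only nontrivial external input is the matroid union theorem, which you sensibly propose to cite rather than reprove.
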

Since we are interested in feasible partitions, we want to slightly modify Theorem \ref{TutteN}. In particular, Theorem \ref{TutteN} can be restricted to feasible partitions as follows.
\begin{proposition} \label{Tutte}
	A graph $G=(V,E)$  has $k$ disjoint spanning trees if and only if $S(G) \geq k.$ 
\end{proposition}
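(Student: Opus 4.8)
The plan is to deduce Proposition \ref{Tutte} from Theorem \ref{TutteN} by showing that replacing ``all partitions of $V$'' with ``feasible partitions'' does not change the minimum of the ratio $|E_P|/(k_P-1)$; that is,
\[
\min_{P \in \Phi} \frac{|E_P|}{k_P-1} \;=\; \min_{P}\; \frac{|E_P|}{k_P-1},
\]
where the right-hand minimum ranges over all partitions $P=\{V_1,\dots,V_{k_P}\}$ of $V$ with $k_P\ge 2$. Granting this identity, Theorem \ref{TutteN} says that $G$ has $k$ disjoint spanning trees if and only if $|E_P|/(k_P-1)\ge k$ for every such $P$, i.e.\ if and only if the right-hand minimum is $\ge k$, i.e.\ if and only if $S(G)\ge k$, which is precisely the assertion.

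One direction of the displayed identity is immediate: since $\Phi$ is contained in the family of all partitions with at least two blocks, minimizing over the smaller family can only produce a value that is at least as large, so $S(G)\ge \min_P |E_P|/(k_P-1)$.

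For the reverse inequality I would argue by refinement. Given an arbitrary partition $P=\{V_1,\dots,V_{k_P}\}$, if some block $V_i$ induces a \emph{disconnected} subgraph $G(V_i)$, replace $V_i$ by the vertex sets of the connected components of $G(V_i)$, and repeat until every block induces a connected subgraph; this yields a feasible partition $P'$. The key point is that this operation never enlarges the cut set: if $C$ and $C'$ are distinct connected components of $G(V_i)$, then by definition there is no edge of $G$ joining $C$ and $C'$, so splitting $V_i$ along its components creates no new cross edges, whence $E_{P'}=E_P$. On the other hand each split strictly increases the number of blocks, so $k_{P'}\ge k_P$. Therefore
\[
\frac{|E_{P'}|}{k_{P'}-1} \;=\; \frac{|E_P|}{k_{P'}-1} \;\le\; \frac{|E_P|}{k_P-1},
\]
and since $P'\in\Phi$ this gives $S(G)\le |E_P|/(k_P-1)$ for every partition $P$, hence $S(G)\le \min_P |E_P|/(k_P-1)$. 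Combining the two inequalities gives the identity, and the proposition follows as explained.

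There is no substantive obstacle: the only step that needs a moment's care is the observation that splitting a block along the connected components of its induced subgraph leaves the cut set unchanged (this is what prevents a refinement of a non-feasible partition from ever being worse than the original), together with the trivial remark that the refinement terminates since each step increases the number of blocks, which is bounded by $|V|$. I would also note the boundary case is harmless: both Theorem \ref{TutteN} and the definition of $S(G)$ only involve partitions with $k_P\ge 2$, and a refinement of a partition with $k_P\ge 2$ still has at least two blocks.
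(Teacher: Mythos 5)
Your proposal is correct and follows essentially the same route as the paper: the paper's proof also reduces to Theorem \ref{TutteN} by refining an arbitrary partition along the connected components of each block's induced subgraph, observing that the cut set is unchanged while the number of blocks can only increase, so the ratio $|E_P|/(k_P-1)$ does not increase. Your phrasing via the equality of the two minima is just a repackaging of that same argument.
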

\begin{proof}
	Assume that $G$ has $k$ disjoint spanning trees. By Theorem \ref{TutteN}, (\ref{ktrees2}) holds for any  partition $P$. In particular, it holds for any feasible one.
	
	Assume that (\ref{ktrees2}) holds for any feasible partition $P$. Let $P= \left\{ V_1,V_2,...,V_{k_P} \right\}$ be an arbitrary partition of the node set $V$ with $k_P \geq 2$. For each subgraph $G(V_i)$ of $G$ induced by $V_i$, let $m(i)$ be the number of connected components of $G(V_i)$ and let $V_{i_1},V_{i_2},\dots,V_{i_{m(i)}}$ be the sets of nodes of those connected components. Consider the partition $P' := \left\{V_{i_j}:1\leq i \leq k_P, 1\leq j \leq m(i) \right\}$. Then, $k_{P'} \geq k_{P} $. Since each $V_{i_j}$ is connected,  the partition $P'$ is feasible. Notice that $E_P$ is the set of edges connecting nodes between $V_{i_j}$ with different $i$ and there are no edges connecting nodes between $V_{i_j}$ with the same $i$ and different $j$, therefore we have $E_{P'}=E_{P}$.
	This implies 
	\[\frac{|E_P|}{k_P-1} \geq \frac{|E_{P'}|}{k_{P'}-1} \geq k.\]
	Hence, (\ref{ktrees2}) holds for any partition $P$. By Theorem \ref{TutteN},  $G$ has $k$ disjoint spanning trees.
	
\end{proof}

\begin{corollary}\label{coro1}
	Let  $G= (V,E)$ be a connected graph. Let $S(G)$ be the strength of $G$. Then, the maximum number of disjoint spanning trees in the graph $G$ is $\lfloor S(G) \rfloor$, where $\lfloor \cdot \rfloor$ is the floor function.
\end{corollary}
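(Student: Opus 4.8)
The plan is to deduce this immediately from Proposition \ref{Tutte}, which already packages the Nash-Williams–Tutte theorem in the form ``$G$ has $k$ disjoint spanning trees if and only if $S(G)\ge k$'' for positive integers $k$. Let $m$ denote the maximum number of edge-disjoint spanning trees of $G$. First I would apply the ``only if'' direction of Proposition \ref{Tutte} with $k=m$: since $G$ does contain $m$ disjoint spanning trees, we get $S(G)\ge m$, and because $m$ is an integer this upgrades to $\lfloor S(G)\rfloor \ge m$. Conversely, I would apply the ``if'' direction with $k=\lfloor S(G)\rfloor$. For this to be legitimate one must observe that $\lfloor S(G)\rfloor$ is a genuine positive integer: every feasible partition $P$ has a connected shrunk graph $G_P$ on $k_P$ vertices, hence $|E_P|\ge k_P-1$, so $S(G)\ge 1$ and $\lfloor S(G)\rfloor\ge 1$. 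Then $S(G)\ge \lfloor S(G)\rfloor$, so Proposition \ref{Tutte} yields $\lfloor S(G)\rfloor$ disjoint spanning trees in $G$, i.e. $m\ge \lfloor S(G)\rfloor$. Combining the two inequalities gives $m=\lfloor S(G)\rfloor$, as claimed.

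There is essentially no obstacle in this argument; all the combinatorial work sits in Theorem \ref{TutteN} and its restriction to feasible partitions in Proposition \ref{Tutte}. The only point requiring a sentence of care is the check that $\lfloor S(G)\rfloor$ is a positive integer, so that the equivalence in Proposition \ref{Tutte}---which is phrased for integer $k$---may be invoked at the value $k=\lfloor S(G)\rfloor$.
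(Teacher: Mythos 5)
Your proposal is correct and is exactly the intended deduction: the paper states this corollary immediately after Proposition \ref{Tutte} without a separate proof, since it follows by applying that equivalence at $k=m$ (giving $\lfloor S(G)\rfloor\ge m$) and at $k=\lfloor S(G)\rfloor$ (giving $m\ge\lfloor S(G)\rfloor$), just as you do. Your extra remark that $\lfloor S(G)\rfloor\ge 1$, so the integer-$k$ equivalence applies, is a reasonable small check and does not change the argument.
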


\begin{lemma}\label{smz}
	Let $G= (V,E,\si)$ be a weighted connected graph with edge weights $\si \in \Z^E_{>0}$.  Then, $\lfloor S_{\si}(G) \rfloor$ is the value of the following problem.
	\begin{equation} \label{dualzzz}
		\begin{array}{ll}
			\underset{\lambda \in \Z^{\Ga}}{\text{\rm maximize}}    &\lambda^T\mathbf{1} \\
			\text{\rm subject to } &\sum\limits_{e \in \ga} \lambda(\ga) \leq \sigma(e), \qquad \forall e \in E; \\
			&\lambda \geq 0.  	 
		\end{array}
	\end{equation}
\end{lemma}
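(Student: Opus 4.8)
The plan is to reduce the integer-weighted statement to the unweighted multigraph case already settled in Corollary \ref{coro1}. Given $G=(V,E,\sigma)$ with $\sigma\in\Z^E_{>0}$, I would introduce the unweighted connected multigraph $G^{\sigma}$ obtained from $G$ by replacing each edge $e$ with $\sigma(e)$ parallel copies. Write $E^{\sigma}$ for its edge set, and for $A\subseteq E$ let $A^{\sigma}\subseteq E^{\sigma}$ denote the set of all copies of edges of $A$, so that $|A^{\sigma}|=\sigma(A)$. Since adding parallel edges preserves connectivity, $G^{\sigma}$ is connected, so Corollary \ref{coro1} applies to it.

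The first step is to show $S(G^{\sigma})=S_{\sigma}(G)$. Parallel copies do not change which vertex subsets induce connected subgraphs, so a partition $P=\{V_1,\dots,V_{k_P}\}$ of $V$ is feasible for $G^{\sigma}$ if and only if it is feasible for $G$; moreover in that case the cut set of $P$ in $G^{\sigma}$ is exactly $(E_P)^{\sigma}$, so its cardinality is $\sigma(E_P)$. Taking the minimum over feasible partitions gives $S(G^{\sigma})=\min_{P}\frac{\sigma(E_P)}{k_P-1}=S_{\sigma}(G)$.

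The second step is to identify the optimal value of (\ref{dualzzz}) with the maximum number of pairwise edge-disjoint spanning trees of $G^{\sigma}$. There is a natural projection $\pi$ sending a spanning tree of $G^{\sigma}$ to the spanning tree of $G$ obtained by collapsing each bundle of parallel copies back to the original edge, and conversely each spanning tree $\gamma$ of $G$ lifts to spanning trees of $G^{\sigma}$ by choosing, for each $e\in\gamma$, one of the $\sigma(e)$ copies of $e$. Given an integer-feasible $\lambda$ for (\ref{dualzzz}), for each $\gamma$ I would produce $\lambda(\gamma)$ lifts and then, edge by edge, assign distinct copies of each $e$ to the $\sum_{\gamma'\ni e}\lambda(\gamma')\le\sigma(e)$ lifts that use $e$; this is possible precisely because of the packing constraint, and it yields $\sum_{\gamma}\lambda(\gamma)$ pairwise edge-disjoint spanning trees of $G^{\sigma}$. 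Conversely, from $m$ pairwise edge-disjoint spanning trees $T_1,\dots,T_m$ of $G^{\sigma}$ I would set $\lambda(\gamma):=|\{j:\pi(T_j)=\gamma\}|$; since edge-disjoint trees use distinct copies of any given $e$ and there are only $\sigma(e)$ such copies, the packing constraint $\sum_{\gamma\ni e}\lambda(\gamma)\le\sigma(e)$ holds, so $\lambda$ is integer-feasible with objective value $m$. Hence the two optimal values agree. Combining with the first step and Corollary \ref{coro1} applied to $G^{\sigma}$, the value of (\ref{dualzzz}) equals the maximum number of edge-disjoint spanning trees of $G^{\sigma}$, which equals $\lfloor S(G^{\sigma})\rfloor=\lfloor S_{\sigma}(G)\rfloor$.

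The main obstacle I anticipate is the bookkeeping in the second step: making the lift/project correspondence precise enough that an integer-feasible $\lambda$ really does correspond to an honest family of pairwise edge-disjoint spanning trees of $G^{\sigma}$ and vice versa, while being careful that $G$ may itself already be a multigraph, so that a single bundle of parallel copies between two vertices $u,v$ in $G^{\sigma}$ may arise from several distinct original edges of $G$. Once that correspondence is nailed down, the rest is a direct unwinding of definitions together with the already-established Corollary \ref{coro1}.
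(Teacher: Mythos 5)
Your proposal is correct and takes essentially the same route as the paper's proof: replace each edge $e$ by $\sigma(e)$ parallel copies to form an unweighted multigraph, observe that the strength is preserved, identify the value of (\ref{dualzzz}) with the maximum number of pairwise edge-disjoint spanning trees of that multigraph, and conclude via Corollary \ref{coro1}. The only difference is presentational: the paper asserts the equivalence of the two packing problems and invokes Remark \ref{remarkdis}, whereas you spell out the lift/projection correspondence explicitly.
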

Note that the problem in (\ref{dualzzz}) is similar to the dual modulus problem in (\ref{dualr}), but with $\la$ restricted to be integer-valued.
\begin{proof}[Proof of Lemma \ref{smz}]
	We create an unweighted multigraph $G'$ with node set $V$ from $G$ by making $\si(e)$ copies of each edge $e \in E.$ Then, $S_{\si}(G) = S(G')$. By Corollary \ref{coro1},  the maximum number of disjoint spanning trees of $G'$ is  $ \lfloor S(G') \rfloor =\lfloor S_{\si}(G) \rfloor$. Notice that the problem (\ref{dualzzz}) on $G$ is equivalent to the same problem on $G'$ but with $\sigma\equiv 1$.
	Hence, the value of  (\ref{dualzzz}) is equal to the maximum number of disjoint spanning trees of $G'$ by Remark \ref{remarkdis}.
\end{proof}

\begin{proof}[Proof of Proposition \ref{sm}]
	Assume that $\si \in \Q^E$. Since, the dual problem (\ref{dualr}) is a linear program, there exists an optimal solution $\lambda^{\ast} \in \Q^{\Ga}$. In particular, $M := \Mod_{1,\si}(\Ga)$ is the value of the following problem
	\begin{equation}\label{eq:dualq}
		{\rm maximize} \lbr \lambda^T\mathbf{1}: \la \in  \Q^{\Ga}, \sum\limits_{e \in \ga} \lambda(\ga) \leq \sigma(e) \hspace{2pt} \forall e \in E, \lambda \geq 0\rbr ,
	\end{equation}
	and
	\begin{equation}\label{eq:dualq-opt}
		\lambda^{\ast} \in \argmax \lbr \lambda^T\mathbf{1}: \la \in  \Q^{\Ga}, \sum\limits_{e \in \ga} \lambda(\ga) \leq \sigma(e) \hspace{2pt} \forall e \in E, \lambda \geq 0\rbr .
	\end{equation}
	Since $\si \in \Q^E$ and $\lambda^* \in \Q^{\Ga}$, there exists a positive integer $h$ such that
	\[\delta(e) := h\sigma(e) \in \Z, \qquad\forall e \in E \] and \[\alpha^\ast(\ga) := h\lambda^\ast(\ga) \in \Z, \qquad\forall \ga \in \Ga.\]
	Perform the change of variable $\alpha = h\lambda$ in (\ref{eq:dualq}) and (\ref{eq:dualq-opt}) to obtain
	\begin{align*}
		hM = \max \lbr \alpha^T\mathbf{1}: \alpha \in  \Q^{\Ga}, \sum\limits_{e \in \ga} \alpha(\ga) \leq \delta(e) \hspace{2pt} \forall e \in E, \alpha \geq 0\rbr ,
	\end{align*}
	\begin{align*}
		\alpha^{\ast} \in \argmax \lbr \alpha^T\mathbf{1}: \alpha \in  \Q^{\Ga}, \sum\limits_{e \in \ga} \alpha(\ga) \leq \delta(e) \hspace{2pt} \forall e \in E, \alpha \geq 0\rbr .
	\end{align*}
	And, since $\alpha^{\ast} \in \Z^{\Ga}$, we have
	\[hM = \max \lbr \alpha^T\mathbf{1}: \alpha \in  \Z^{\Ga}, \sum\limits_{e \in \ga} \alpha(\ga) \leq \delta(e) \hspace{2pt} \forall e \in E, \alpha \geq 0\rbr ,\]
	
	\[ \alpha^{\ast} \in \argmax \lbr \alpha^T\mathbf{1}: \alpha \in  \Z^{\Ga}, \sum\limits_{e \in \ga} \alpha(\ga) \leq \delta(e) \hspace{2pt} \forall e \in E, \alpha \geq 0\rbr .\]
	Let $P^{\ast}$ be a critical partition of $G =(V,E,\delta)$. Denote $\kappa := k_{P^{\ast}}-1$.
	Then, by Remark \ref{rsi}, we have
	\[S_{\kappa\delta}(G)= \kappa S_{\delta}(G)= \kappa\frac{\delta(E_{P^{\ast}})}{k_{P^{\ast}}-1} = \delta(E_{P^{\ast}}) \in \Z.\]
	Apply Lemma \ref{smz} to the graph $G=(V,E,\kappa \delta)$, we have that

	\[S_{\kappa\delta}(G) = \max \lbr \beta^T\mathbf{1}: \beta \in  \Z^{\Ga}, \sum\limits_{e \in \ga} \beta(\ga) \leq \kappa \delta(e) = \kappa h\sigma(e) \hspace{2pt} \forall e \in E, \beta \geq 0\rbr. \]
	As before, perform another change of variables, $\beta = \kappa h\lambda$,  for the dual problem (\ref{eq:dualq}). We obtain
	\[\kappa hM = \max \lbr \beta^T\mathbf{1}: \beta \in  \Z^{\Ga}, \sum\limits_{e \in \ga} \beta(\ga) \leq \kappa\delta(e) = \kappa h\sigma(e) \hspace{2pt} \forall e \in E, \beta \geq 0\rbr. \]
	Therefore, 
	\begin{equation*}
		\kappa hS_{\si}(G)= S_{\kappa \delta}(G) = \kappa hM.
	\end{equation*}
	Hence,
	\[ S_{\si}(G)= \Mod_{1,\si}(\Ga).\]
	The proof is completed.
\end{proof}
Before proving Claim \ref{claim:1}, we make the following remark. Let $P$ be any feasible partition. Since  $G$ is connected, the restriction of any spanning tree $\ga$ of $G$ onto $E_P$ forms a connected spanning subgraph of the shrunk graph $G_P$. Equivalently, $|E_P\cap \ga| \geq k_P-1$.  Therefore, we always have
\begin{equation*}
	\frac{1}{k_P-1}\mathbbm{1}_{E_P}  \in \Adm(\Ga).
\end{equation*}
\begin{proof}[Proof of Claim \ref{claim:1}]
	By Lemma \ref{lem:vertex},  for any $\widehat{\gamma} \in \widehat{\Gamma}$, we choose $\sigma \in \mathbb{R}^E_{> 0}$ such that $\widehat{\gamma}$ is the unique solution for both problems in (\ref{mod:mm}).
	By Proposition \ref{sm}, we have
	\[ \min\limits_{\widehat{\ga} \in \widehat{\Ga}}\sigma^{T}\widehat{\ga} = \Mod_{1,\sigma}(\Gamma) = S_\si(G). \]
	
	Let $P$ be a critical partition for $S_{\si}(G)$. Then, $\frac{1}{k_P-1}\mathbbm{1}_{E_P} \in \Adm(\Ga)$ and, denoting the dot product in $\R^E$ with a dot,
	\[\frac{1}{k_P-1}\mathbbm{1}_{E_P}\cdot \si = S_{\si}(G)= \Mod_{1,\si}(\Ga).\] Hence, $\widehat{\ga}$ must be equal to $\frac{1}{k_P-1}\mathbbm{1}_{E_P}$. Therefore, $\widehat{\Ga} \subset \Phi$.
\end{proof}

\subsection{The Fulkerson blocker family of the spanning tree family} \label{sec:secondhalf}
Vertex-biconnected graphs have been defined in Definition \ref{def:vertex-biconnected}. Now, we define edge-biconnected graphs.
\begin{definition}  A graph $G=(V,E)$ is said to be {\it edge-biconnected}, if it has at least two vertices, is connected and the removal of any edge does not disconnect the graph.
\end{definition}
It follows that a connected graph with at least two vertices is not vertex-biconnected if and only if there is a vertex such that its removal along with the removal of all adjacent edges disconnects the graph. Such a vertex is called an {\it articulation point}.

Moreover, if a graph $G= (V,E)$ with $|V| \geq 3$ is vertex-biconnected, then it is edge-biconnected, see  \cite[Section III.2]{modern}. When $|V|=2$, the path graph $P_2$ is vertex-biconnected, but not edge-biconnected. Vertex-biconnected graphs also have several equivalent characterizations, one of them is described in the following theorem. 

\begin{theorem}[Section III.2, \cite{modern}] \label{bicon}Let $G=(V,E)$ be a graph with $|V| \geq 3$. Then, $G$ is vertex-biconnected if and only if for any two vertices $u$ and $v$ of $G$, there are two  simple paths connecting $u$ and $v$, with no vertices in common except $u$ and $v$.
\end{theorem}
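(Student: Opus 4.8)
The plan is to prove the two implications separately, handling the reverse implication (two internally disjoint paths $\Rightarrow$ vertex-biconnected) first, since it is short. Argue by contraposition: if $G$ is not vertex-biconnected then, since $G$ is connected (the hypothesis already supplies a path between any two vertices) and $|V|\ge 3$, it has an articulation point $z$; pick $u,v$ in distinct connected components of $G-z$. Every $u$--$v$ path in $G$ must then pass through $z$, and since $z\notin\{u,v\}$ this $z$ is an interior vertex of every such path, so any two $u$--$v$ paths share the interior vertex $z$, contradicting the hypothesis.

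For the forward implication I would prove, by induction on the distance $d:=d(u,v)$, that any two distinct vertices are joined by two paths with no vertex in common other than their endpoints. I would use the fact recorded just before the statement that a vertex-biconnected graph with $|V|\ge 3$ is edge-biconnected, so that $G-uv$ is connected whenever $uv\in E$. Base case $d=1$: the edge $uv$ together with any $u$--$v$ path in $G-uv$ is the required pair. Inductive step $d\ge 2$: let $w$ be the neighbor of $v$ on a shortest $u$--$v$ path, so $d(u,w)=d-1$ and $w\notin\{u,v\}$, and let $R_1,R_2$ be internally disjoint $u$--$w$ paths from the inductive hypothesis. If $v$ lies on one of them, say $R_1$, then the $u$--$v$ subpath of $R_1$ together with ``$R_2$ followed by the edge $wv$'' works. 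Otherwise, using that $G-w$ is connected, choose a $u$--$v$ path $Q$ avoiding $w$, let $x$ be the last vertex of $Q$ (traversed from $u$) lying on $R_1\cup R_2$, say $x\in R_1$, and take the two paths to be: the $u$--$x$ part of $R_1$ followed by the $x$--$v$ part of $Q$; and $R_2$ followed by the edge $wv$.

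I expect the main obstacle to be the bookkeeping that shows this last pair is internally disjoint: one checks that $x\ne w$ (because $w\notin Q$), hence the $u$--$x$ segment of $R_1$ avoids $w$ and meets $R_2$ only at $u$; that the $x$--$v$ tail of $Q$ meets $R_1\cup R_2$ only at $x$ by the choice of $x$; and that $v$ is not an interior vertex of either path — and the subcase $v\in R_1\cup R_2$ genuinely has to be split off first, as above, or these checks fail. An alternative that sidesteps the path surgery is to invoke Menger's theorem: for non-adjacent $u,v$ no single vertex separates them (each $G-z$ is connected), so Menger yields two internally disjoint $u$--$v$ paths, while for adjacent $u,v$ edge-biconnectedness already gives a second $u$--$v$ path avoiding the edge $uv$. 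I would present the self-contained induction and mention the Menger shortcut in a remark.
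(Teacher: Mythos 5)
Your proof is correct, but note that the paper itself does not prove this statement at all: it is quoted with a citation to Section III.2 of \cite{modern} (it is Whitney's classical characterization of $2$-connected graphs), so there is no internal proof to compare against. What you supply is essentially the standard textbook argument: the easy direction by exhibiting an articulation point $z$ and observing that $z$ is an interior vertex of every $u$--$v$ path, and the hard direction by induction on $d(u,v)$, taking the predecessor $w$ of $v$ on a shortest path, internally disjoint $u$--$w$ paths $R_1,R_2$ from the inductive hypothesis, and a $u$--$v$ path $Q$ in $G-w$. Your handling of the delicate points is right: splitting off the subcase $v\in R_1\cup R_2$ (where ``the $u$--$v$ subpath of $R_1$'' plus ``$R_2$ followed by $wv$'' works because $v\neq u,w$ and $R_1,R_2$ are internally disjoint) is exactly what is needed to keep the second path simple, and in the main case the choice of $x$ as the \emph{last} vertex of $Q$ on $R_1\cup R_2$, together with $x\neq w$ (since $w\notin Q$), gives internal disjointness. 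Two small remarks: (i) your base case leans on the fact, also only quoted in the paper from the same reference, that a vertex-biconnected graph on at least three vertices is edge-biconnected; if you want full self-containment, this is a one-line argument (if $uv$ were a bridge, the endpoint lying in a component of $G-uv$ with at least two vertices would be an articulation point); (ii) in the easy direction you should say explicitly that the hypothesis forces $G$ to be connected with at least two vertices, so that failure of vertex-biconnectivity is exactly the existence of an articulation point under Definition \ref{def:vertex-biconnected} --- you do gesture at this, and it is fine. The Menger shortcut you mention is also valid but imports a stronger tool than the statement requires.
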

\begin{corollary}\label{cycle}
	Let $G=(V,E)$ be a graph with $|V| \geq 3$. Then, $G$ is vertex-biconnected if and only if, for any two vertices $u$ and $v$ of $G$, there is a simple cycle containing $u$ and $v$.
\end{corollary}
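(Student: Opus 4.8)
The plan is to deduce this directly from Theorem \ref{bicon}, by translating ``two internally vertex-disjoint $u$--$v$ paths'' into ``a simple cycle through $u$ and $v$'' and conversely.

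First I would prove the forward direction. Assume $G$ is vertex-biconnected and fix two distinct vertices $u$ and $v$. By Theorem \ref{bicon} there are two simple paths $P_1$ and $P_2$ joining $u$ and $v$ with no vertex in common other than $u$ and $v$. I would first note that $P_1$ and $P_2$ are genuinely distinct (reading Theorem \ref{bicon} in the Menger sense: two internally vertex-disjoint paths, not one path counted twice), so that their edge sets are distinct. Then I would check that $C := P_1 \cup P_2$ is a simple cycle: writing $P_1$ as the distinct-vertex sequence $u=a_0,a_1,\dots,a_k=v$ and $P_2$ (reversed) as $v=b_0,b_1,\dots,b_m=u$, internal disjointness says the internal vertices of $P_1$ and of $P_2$ are disjoint, so the closed walk $u,a_1,\dots,a_{k-1},v,b_1,\dots,b_{m-1},u$ visits each of its vertices exactly once; hence it is a simple cycle, and by construction it contains both $u$ and $v$.

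For the converse I would argue in the reverse direction. Assume that for every pair of distinct vertices $u,v$ there is a simple cycle $C$ containing both. Splitting $C$ at $u$ and at $v$ produces its two arcs $Q_1$ and $Q_2$, each a $u$--$v$ path; since $C$ is a simple cycle, each $Q_i$ is a simple path and $Q_1,Q_2$ have no common vertex other than $u$ and $v$. (In the degenerate multigraph case where $C$ has length $2$, i.e. $C$ consists of two parallel $uv$-edges, the arcs $Q_1,Q_2$ are these two single edges, which are trivially internally disjoint.) Thus $u$ and $v$ are joined by two internally vertex-disjoint simple paths, and Theorem \ref{bicon} yields that $G$ is vertex-biconnected.

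Since this is essentially a bookkeeping argument, I do not anticipate a real obstacle. The only point requiring care is the multigraph corner case: ``simple cycle'' must be understood to permit a $2$-cycle formed by two parallel edges, and one must make sure Theorem \ref{bicon} is invoked as furnishing two \emph{distinct} internally disjoint paths rather than a single path listed twice, so that the forward direction genuinely produces a cycle.
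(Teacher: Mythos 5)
Your proposal is correct and matches the paper's intent: the paper states Corollary \ref{cycle} without proof, as an immediate consequence of Theorem \ref{bicon}, and your translation between two internally vertex-disjoint $u$--$v$ paths and a simple cycle through $u$ and $v$ (including the parallel-edge $2$-cycle case, relevant since the paper works with multigraphs) is exactly that canonical derivation.
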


The following lemma  is stated in \cite{chopraon} without proof. For completeness, we give a proof for this lemma.
\begin{lemma}
	\label{lemma} Let $G=(V,E)$ be a vertex-biconnected graph with $|V| \geq 2$ and $|E| \geq 2$. For any two distinct edges $e_1$ and $e_2$ $\in E$, there exists two spanning trees $\ga_1$ and $\ga_2$ of $G$ such that
	\begin{equation}\label{lemma11}
		\ga_1 \backslash \ga_2 =  \lbrace e_1 \rbrace \qquad  \text{ and} \qquad  \ga_2 \backslash \ga_1 =  \lbrace e_2 \rbrace.
	\end{equation} 
\end{lemma}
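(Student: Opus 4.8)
The plan is to deduce the lemma from Corollary~\ref{cycle}: the two spanning trees will differ by a single fundamental exchange, so the whole argument reduces to exhibiting a simple cycle of $G$ that contains both $e_1$ and $e_2$.

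First I would dispose of the degenerate case $|V|=2$, where $G$ is a multigraph on two vertices and, since $|E|\ge 2$, one may simply take $\ga_1=\{e_1\}$ and $\ga_2=\{e_2\}$; both are spanning trees and (\ref{lemma11}) is immediate since $e_1\neq e_2$. So assume $|V|\ge 3$. To produce the cycle I would pass to the graph $G'$ obtained from $G$ by subdividing $e_1$ with a new vertex $a$ and $e_2$ with a new vertex $b$, so that $a$ has exactly the two neighbours that were the endpoints of $e_1$, and likewise for $b$. Subdividing an edge of a vertex-biconnected graph on at least three vertices again yields a vertex-biconnected graph (standard; this uses that $G$ is $2$-edge-connected when $|V|\ge 3$, cf.\ \cite[Section III.2]{modern}), and $|V(G')|=|V|+2\ge 5$, so Corollary~\ref{cycle} applies to $G'$ and gives a simple cycle $C'$ of $G'$ containing $a$ and $b$. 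Because $a$ and $b$ each have degree $2$ in $G'$, $C'$ must traverse both subdivided halves of $e_1$ and of $e_2$; suppressing $a$ and $b$ then turns $C'$ into a closed walk $C$ of $G$ that still visits each of its vertices exactly once — simplicity survives because $G$ has no self-loops, so the endpoints of $e_1$, and those of $e_2$, were already distinct vertices on the simple cycle $C'$ — hence $C$ is a simple cycle of $G$ with $e_1,e_2\in E(C)$.

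Given such a $C$, I would finish with the usual graphic-matroid exchange. The subgraph $C-e_2$ is a forest (a path, or just the edge $e_1$ if $C$ has length two) containing $e_1$; since $G$ is connected, extend it to a spanning tree $\ga_1$ of $G$. Then $e_1\in\ga_1$, and $e_2\notin\ga_1$, since otherwise $\ga_1$ would contain the whole cycle $C$. Adding $e_2$ back to $\ga_1$ creates exactly one cycle in $\ga_1+e_2$, which must be $C$ because $C\subseteq\ga_1+e_2$ is a cycle and the fundamental cycle is unique; in particular $e_1$ lies on this cycle. Therefore $\ga_2:=\ga_1-e_1+e_2$ is again a spanning tree of $G$, and by construction $\ga_1\setminus\ga_2=\{e_1\}$ and $\ga_2\setminus\ga_1=\{e_2\}$, which is (\ref{lemma11}).

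The step that needs genuine care is the passage from "common cycle through two prescribed vertices'' (Corollary~\ref{cycle}) to "common cycle through two prescribed edges'': the subdivision trick is the clean way to do this, but one must verify both that subdivision preserves vertex-biconnectivity here (which is why the case $|V|=2$ is separated out) and that suppressing the two new degree-$2$ vertices leaves a genuine simple cycle. Once the cycle $C$ is in hand, the construction of $\ga_1$ and $\ga_2$ is entirely routine.
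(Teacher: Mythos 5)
Your proposal is correct and follows essentially the same route as the paper's proof: subdivide $e_1$ and $e_2$, check that the subdivided graph is still vertex-biconnected so that Corollary \ref{cycle} yields a simple cycle through the two new vertices (hence a cycle of $G$ through $e_1$ and $e_2$), and then perform the standard tree exchange after extending $C\setminus e_2$ to a spanning tree. Your write-up is in fact slightly more careful than the paper's (explicitly verifying $e_2\notin\ga_1$, the uniqueness of the fundamental cycle, and the parallel-edge case), but the underlying argument is the same.
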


\begin{proof}
	First, if $|V| = 2$, then we choose $\ga_1 = \lbrace e_1 \rbrace$ and $\ga_2 = \lbrace e_2 \rbrace$ and (\ref{lemma11}) holds. 
	Next, assume that $|V| \geq 3$, then $G$ is edge-biconnected by Theorem \ref{bicon}. We will show that there exists a cycle in the graph $G$ which contains the edges $e_1$ and $e_2$. Add two new vertices $v_1$ and $v_2$ to the middle of the edges $e_1$ and $e_2$ to create a new graph $G_1$. The removal $v_1$ or $v_2$ from $G_1$ is equivalent to the removal $e_1$ or $e_2$ from $G$ which does not disconnect $G$, hence it does not disconnect $G_1$. The removal of any other vertex from $G_1$ also does not disconnect $G_1$ because $G$ is vertex-biconnected. Therefore, $G_1$ is vertex-biconnected. By Corollary \ref{cycle}, there exists a simple cycle  in $G_1$ that contains $v_1$ and $v_2$, thus there exists a cycle $C$  in $G$ containing $e_1$ and $e_2$.

	Break the cycle $C$ by eliminating the edge $e_2$ from $C$ to create a path $C\backslash e_2$ in $G$. Now proceed as one would in Kruskal's algorithm, successively adding  edges one at a time to $C\backslash e_2$ without creating a cycle. After a number of edge additions, a spanning tree $\ga_1$ of $G$ will result and  it contains the path $C\backslash e_2$. Then, $\ga_1$ contains $e_1$. Now, eliminate $e_1$ from $\ga_1$ and add $e_2$ to $\ga_1$, we obtain another spanning tree $\ga_2$.
	Therefore, (\ref{lemma11}) holds and the proof is completed.
\end{proof}

\begin{claim}\label{claim1}
	We have  $\Theta \subset \widehat{\Gamma}$.
\end{claim}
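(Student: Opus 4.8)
The plan is to show that for each feasible partition $P$ whose shrunk graph $G_P$ is vertex-biconnected, the usage vector $\eta_P := \frac{1}{k_P-1}\mathbbm{1}_{E_P}$ is an extreme point of $\Adm(\Gamma)$; since $\Theta$ is precisely the set of such vectors and $\widehat{\Gamma} = \Ext(\Adm(\Gamma))$, this yields $\Theta \subset \widehat{\Gamma}$. The remark preceding the proof of Claim \ref{claim:1} already gives $\eta_P \in \Adm(\Gamma)$, so I would only need to verify extremality: assuming $\eta_P = \tfrac12(\rho_1+\rho_2)$ with $\rho_1,\rho_2 \in \Adm(\Gamma)$, show that $\rho_1 = \rho_2 = \eta_P$.

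The first step is to localize the problem to $E_P$. Since $\eta_P$ vanishes off $E_P$ and $\rho_1,\rho_2 \ge 0$, both $\rho_1$ and $\rho_2$ are supported on $E_P$. Next I would invoke Theorem \ref{par}(i): every spanning tree $\gamma_P$ of $G_P$ arises as $\gamma \cap E_P$ for some spanning tree $\gamma$ of $G$ (lift $\gamma_P$ together with spanning trees of each $G_i$ as in (\ref{eq:construct-tree})). Because $\rho_j$ is supported on $E_P$, we get $\ell_{\rho_j}(\gamma) = \sum_{e \in \gamma_P}\rho_j(e)$, so admissibility of $\rho_j$ on $G$ forces $\sum_{e\in\gamma_P}\rho_j(e) \ge 1$ for every spanning tree $\gamma_P$ of $G_P$. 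Since $G_P$ has $k_P$ vertices, $\sum_{e\in\gamma_P}\eta_P(e) = |\gamma_P|/(k_P-1) = 1$; averaging the two inequalities and using that each is at least $1$ forces $\sum_{e\in\gamma_P}\rho_1(e) = \sum_{e\in\gamma_P}\rho_2(e) = 1$ for \emph{every} spanning tree $\gamma_P$ of $G_P$.

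The last step is to upgrade this tightness to constancy on $E_P$. If $|E_P| = 1$ this is immediate. If $|E_P| \ge 2$, then $G_P$ is vertex-biconnected with $k_P \ge 2$ vertices and at least two edges, so Lemma \ref{lemma} provides, for any two distinct $e_1,e_2 \in E_P$, spanning trees $\gamma_1,\gamma_2$ of $G_P$ with $\gamma_1\setminus\gamma_2 = \{e_1\}$ and $\gamma_2\setminus\gamma_1 = \{e_2\}$; subtracting $\sum_{e\in\gamma_1}\rho_j(e) = 1 = \sum_{e\in\gamma_2}\rho_j(e)$ and cancelling the common edges yields $\rho_j(e_1) = \rho_j(e_2)$. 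Hence each $\rho_j$ is constant on $E_P$, and because every spanning tree of $G_P$ has $k_P - 1$ edges this constant equals $1/(k_P-1)$, i.e.\ $\rho_1 = \rho_2 = \eta_P$, so $\eta_P$ is an extreme point of $\Adm(\Gamma)$.

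I expect the crux to be this last step: it rests entirely on Lemma \ref{lemma} (the existence of two spanning trees of $G_P$ differing by a single edge swap), and it requires the separate, easy treatment of the degenerate case $|E_P| = 1$, where the hypothesis of Lemma \ref{lemma} fails. The reduction to $G_P$ — controlling the supports, lifting spanning trees via Theorem \ref{par}, and pinning $\ell_{\rho_j}$ to $1$ on all of $\Gamma_{G_P}$ — is routine.
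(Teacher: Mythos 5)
Your proposal is correct and follows essentially the same route as the paper's proof: assume $\eta_P=\tfrac12(\rho_1+\rho_2)$, observe both densities vanish off $E_P$, lift spanning trees of $G_P$ to spanning trees of $G$ (the paper does this by the explicit construction underlying Theorem \ref{par}(i)), force the tree sums to equal $1$, and then use Lemma \ref{lemma} to deduce constancy on $E_P$ and the value $1/(k_P-1)$, with the $|E_P|=1$ case handled separately. The only cosmetic difference is that you establish tightness on all spanning trees of $G_P$ before invoking the edge-swap lemma, whereas the paper applies the lemma first and checks tightness only on the two resulting trees.
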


\begin{proof}
	
	We have $\Theta \subset \Adm(\Gamma)$. Let $ P \in \Theta$, we want to show that 
	\[w :=\frac{1}{k_P-1}\mathbbm{1}_{E_P} \in \Ext(\Adm(\Ga))=\widehat{\Gamma}.\]
	Assume that there are two densities $\rho_1,\rho_2 \in \Adm(\Gamma)$ such that 
	\[ w = \frac{1}{2}(\rho_1+\rho_2).\]
	For every edge $e \in E\backslash E_P$, we have \[\frac{1}{2}(\rho_1(e)+\rho_2(e))=w(e)= 0 \Rightarrow \rho_1(e)=\rho_2(e)=0.\] 
	
	First, assume that $|E_P| \geq 2$. Let $e_1$ and $e_2$ be  two arbitrary distinct edges in $E_P$. By Lemma \ref{lemma}, there exists two spanning trees $\ga_1,\ga_2$ of $G_P$ such that $ \ga_1 \backslash \ga_2 = \lbrace e_1\rbrace$ and $ \ga_2 \backslash \ga_1 =\lbrace e_2\rbrace$. For each $i \in \lbrace 1,2,\dots,k_P \rbrace$, let $G(V_i)$ be the connected subgraph induced by $V_i$. Let $t_i$ be a spanning tree of $G(V_i)$. Let $\ga'_1$ be the union of $t_1,t_2,\dots,t_{k_P},\ga_1$. Let $\ga'_2$  be the union of $t_1,t_2,\dots,t_{k_P},\ga_2$. Then, $\ga'_1$ and $\ga'_2$ are two spanning trees  of $G$ and their restrictions onto $E_P$ are  $\ga_1$ and $\ga_2$, respectively. 
	For any given density $\rho \in \R^E_{\geq 0 }$ and $A \subset E$, denote $\rho(A) := \sum\limits_{e\in A}\rho(e)$.
	Since  $\rho_1,\rho_2 \in \Adm(\Gamma)$, we have
	\[\rho_1(\ga'_1),\rho_1(\ga'_2) ,\rho_2(\ga'_1),\rho_2(\ga'_2) \geq 1.\] 
	Since $\rho_1+\rho_2 = 2w$, we have  $\rho_1(\ga'_1)+\rho_2(\ga'_1) = 2w(\ga'_1) =  2w(\ga_1) = 2$ and  $\rho_1(\ga'_2)+\rho_2(\ga'_2) = 2w(\ga'_2) =2w(\ga_2) =  2$. This implies 
	\[\rho_1(\ga'_1)=\rho_2(\ga'_1)=\rho_1(\ga'_2)=\rho_2(\ga'_2)=1.\]
	Since  $\rho_1(e)=\rho_2(e)=0 $ for all $e \in E\backslash E_P$, we obtain  
	\[\rho_1(\ga_1)=\rho_2(\ga_1)=\rho_1(\ga_2)=\rho_2(\ga_2)=1.\]
	Thus, 
	\begin{align*}
		\rho_1(e_1)+\rho_1( \ga_1 \cap \ga_2)= \rho_1(e_2)+\rho_1( \ga_1 \cap \ga_2)= 1, \\
		\rho_2(e_1)+\rho_2( \ga_1 \cap \ga_2)=\rho_2(e_2)+\rho_2( \ga_1 \cap \ga_2) =1.
	\end{align*}
	Hence, $\rho_1(e_1)=\rho_1(e_2)$ and $\rho_2(e_1)=\rho_2(e_2)$. Thus, since $e_1$ and $e_2$ were arbitrary, $\rho_1$, $\rho_2$ are constant in $E_P$. Since $\rho_1(\ga_1) =\rho_2(\ga_1) =1$ and $\ga_1$ has $k_P-1$ edges, we obtain that 
	
	\[\rho_1(e)= \rho_2(e) = \frac{1}{k_P-1}=w(e), \quad \forall e \in E_P.\]
	Therefore, $\rho_1=\rho_2=w $, hence $w$ is an extreme point of $\Adm(\Ga)$.
	
	If $|E_P|= 1$, then $k_P =2$ since $G_P$ is connected. Suppose $E_P =\lbrace e \rbrace$. Let $\ga$ be a spanning tree of $G$, then it must contain $e$. By the same argument above, we have $\rho_1(e) = \rho_2(e) = \frac{1}{k_P-1} = 1$. Therefore,  $\rho_1=\rho_2=w $, and again $w$ is an extreme point of $\Adm(\Ga)$.
	
	In conclusion, $w \in \widehat{\Ga}$.
\end{proof}

\begin{claim} \label{claim2}
	We have $\Phi \cap \widehat{\Ga} \subset  \Theta.$
\end{claim}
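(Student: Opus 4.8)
\textbf{Proof proposal (plan for Claim~\ref{claim2}).}
The plan is to argue by contraposition, following the spirit of \cite{chopraon} but in the language of extreme points: I will show that if $P\in\Phi$ is a feasible partition whose shrunk graph $G_P$ is \emph{not} vertex-biconnected, then the usage vector $w_P:=\frac{1}{k_P-1}\mathbbm{1}_{E_P}$ fails to be an extreme point of $\Adm(\Ga)$, i.e.\ $w_P\notin\widehat\Ga$. Since $\Theta\subset\Phi$, this is precisely the assertion $\Phi\cap\widehat\Ga\subset\Theta$. To set up, first observe that if $k_P=2$ then $G_P$ has two vertices and is automatically vertex-biconnected; hence we may assume $k_P\ge 3$, in which case $G_P$ not vertex-biconnected means it has an articulation point. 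Let $v_j$ be such an articulation point, coming from the block $V_j$, and let $C_1,C_2,\dots,C_m$ (with $m\ge 2$) be the vertex sets of the connected components of $G_P-v_j$. Writing $A:=\bigcup_{v_\ell\in C_1}V_\ell$ and $B:=\bigcup_{v_\ell\in C_2\cup\cdots\cup C_m}V_\ell$, we get the disjoint decomposition $V=A\sqcup B\sqcup V_j$.

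Next I would introduce two coarser partitions,
\[
P_1:=\{V_\ell: v_\ell\in C_1\}\cup\{B\cup V_j\},\qquad
P_2:=\{V_\ell: v_\ell\in C_2\cup\cdots\cup C_m\}\cup\{A\cup V_j\},
\]
and check they are feasible. The only nontrivial point is that the merged block $B\cup V_j$ (resp.\ $A\cup V_j$) induces a connected subgraph of $G$: since $G_P$ is connected, $v_j$ has a neighbour in every $C_i$, so $G_P[\{v_j\}\cup C_2\cup\cdots\cup C_m]$ is connected, and inflating these vertices back to the blocks $V_\ell$ (each connected by feasibility of $P$) shows $G(B\cup V_j)$ is connected; the argument for $A\cup V_j$ is symmetric. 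Because $G_P-v_j$ has no edge joining distinct components, every edge of $G_P$ lies either inside some $C_i$ or between $v_j$ and some $C_i$. Reading off the cut sets from this, $E_{P_1}$ consists exactly of the edges of $G_P$ incident to $C_1$ and $E_{P_2}$ of the remaining edges of $G_P$; hence $E_P=E_{P_1}\sqcup E_{P_2}$. Counting blocks, $k_{P_1}-1=|C_1|$, $k_{P_2}-1=\sum_{i\ge 2}|C_i|$ and $k_P-1=\sum_{i}|C_i|$, so $(k_{P_1}-1)+(k_{P_2}-1)=k_P-1$, with both summands at least $1$ since $m\ge 2$ and each $C_i\ne\varnothing$.

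Combining these, set $t:=\frac{k_{P_1}-1}{k_P-1}\in(0,1)$, so that $1-t=\frac{k_{P_2}-1}{k_P-1}$ and
\[
w_P=\frac{1}{k_P-1}\bigl(\mathbbm{1}_{E_{P_1}}+\mathbbm{1}_{E_{P_2}}\bigr)
=t\cdot\frac{1}{k_{P_1}-1}\mathbbm{1}_{E_{P_1}}+(1-t)\cdot\frac{1}{k_{P_2}-1}\mathbbm{1}_{E_{P_2}}.
\]
By the observation recorded just before the proof of Claim~\ref{claim:1}, the two vectors on the right lie in $\Adm(\Ga)$; moreover they are distinct because their supports $E_{P_1}$ and $E_{P_2}$ are disjoint and each is nonempty (there is an edge of $G_P$ from $v_j$ to $C_1$ and one from $v_j$ to $C_2$). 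Thus $w_P$ is a proper convex combination of two distinct elements of $\Adm(\Ga)$, so $w_P\notin\Ext(\Adm(\Ga))=\widehat\Ga$, which completes the contrapositive.

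I expect the main obstacle to be the bookkeeping in the second paragraph: verifying that the lumped blocks of $P_1$ and $P_2$ are connected in $G$, that $E_{P_1}$ and $E_{P_2}$ genuinely partition $E_P$, and that the block counts add up exactly. Once those combinatorial identities are in place, the convex-combination conclusion is immediate. One small point worth noting so that the set identifications cause no ambiguity: a feasible partition is uniquely recovered from its cut set, since the blocks of $P$ are precisely the connected components of $G\setminus E_P$; hence $\Phi$ and $\Theta$ may be identified with their usage-vector sets without loss.
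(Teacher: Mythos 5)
Your proposal is correct and follows essentially the same route as the paper: given an articulation point of $G_P$, you split $P$ into two coarser feasible partitions $P_1,P_2$ with $E_P=E_{P_1}\sqcup E_{P_2}$ and $(k_{P_1}-1)+(k_{P_2}-1)=k_P-1$, and express $\frac{1}{k_P-1}\mathbbm{1}_{E_P}$ as a proper convex combination of the two admissible usage vectors, contradicting extremality. The only differences are cosmetic (contrapositive instead of contradiction, and a slightly more explicit treatment of the $k_P=2$ case and of the cut-set bookkeeping).
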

\begin{proof}
	Let $P \in \Phi \cap \widehat{\Ga}$. Suppose $P= \left\{ V_1,V_2,...,V_{k_P} \right\}$, each $V_i$ shrinks to a vertex denoted by $v_i$ in  $G_P$. If $G_P$ is vertex-biconnected, then $P\in \Theta$. If  $G_P$ is not vertex-biconnected. Then, there is an articulation point $v_j$ such that the removal of $v_j$ along with the removal of all adjacent edges of $v_j$ disconnect $G_P$ into two subgraphs $H_1$ and $H_2$ of $G_P$, each of them can have one or several connected components. Denote $A := \lbrace i: v_i \in \text{the node set of } H_1 \rbrace$. Denote $B := \lbrace i: v_i \in \text {the node set of } H_2 \rbrace$. Then,  $A,B$ and $\lbrace j\rbrace$ are disjoint and $A \cup B \cup \lbrace j\rbrace  = \lbrace 1,2\dots,k_p \rbrace$.
	Let \[  \displaystyle M := \left( \bigcup_{i\in B \cup \left\{ j\right\} } V_i \right)\subset V(G),\] we define a partition $P_1$ of the node set $V$ as follows: 
	\[  P_1 := \left\{ V_i: i \in A \right\}  \cup   \left\{ M \right\}.\] 
	Let \[  \displaystyle N := \left( \bigcup_{i\in A \cup \left\{ j\right\} } V_i  \right)\subset V(G) ,\]  we define a partition $P_2$ of the node set $V$ as follows: 
	\[  P_2 := \left\{ V_i: i \in B \right\}  \cup   \left\{ N \right\}. \]
	
	Since $v_j$ is connected with all the connected components of $H_1$ within $G_P \setminus H_2$, then the subgraph of $G_P$ induced by $A \cup \lbrace j \rbrace$  is connected. Therefore, the subgraph of $G$ induced by $N$ is connected. Hence, the partition $P_2$ is feasible. Similarly, the partition $P_1$ is feasible.   We have $k_{P_1} = |A|+1$ and $k_{P_2} = |B|+1$. Thus  $k_{P_1}+k_{P_2}=  |A|+1+|B|+1 =k_P +1$.  We also have that $E_{P_1} \cup E_{P_2} =E_{P}$ and $E_{P_1} \cap E_{P_2} = \emptyset$. Therefore,

	\begin{equation}\label{convex}
		\frac{1}{k_P-1}\mathbbm{1}_{E_P}= \frac{k_{P_1}-1}{k_{P}-1}.\frac{1}{k_{P_1}-1}\mathbbm{1}_{E_{P_1}}+ \frac{k_{P_2}-1}{k_{P}-1}.\frac{1}{k_{P_2}-1}\mathbbm{1}_{E_{P_2}}.
	\end{equation}
	Notice that $\frac{k_{P_1}-1}{k_{P}-1}+\frac{k_{P_2}-1}{k_{P}-1}=1$. So $\frac{1}{k_P-1}\mathbbm{1}_{E_P}$ is a convex combination of  $\frac{1}{k_{P_1}-1}\mathbbm{1}_{E_{P_1}}$ and  $\frac{1}{k_{P_2}-1}\mathbbm{1}_{E_{P_2}}$. Since $\frac{1}{k_{P_1}-1}\mathbbm{1}_{E_{P_1}},\frac{1}{k_{P_2}-1}\mathbbm{1}_{E_{P_2}} \in \Adm(\Ga)$ and  $\frac{1}{k_{P_1}-1}\mathbbm{1}_{E_{P_1}} \neq \frac{1}{k_{P_2}-1}\mathbbm{1}_{E_{P_2}}$, we have  $\frac{1}{k_P-1}\mathbbm{1}_{E_P} \notin \widehat{\Gamma}$, this is a  contradiction. Therefore, $G_P$ is vertex-biconnected and $P\in \Theta$.
\end{proof}

Using Claim \ref{claim:1}, Claim \ref{claim1}, and Claim \ref{claim2}, we complete the proof of Theorem \ref{theo:fulkerson}.

\section{Spanning tree modulus for weighted graphs}\label{sec:weighted2.2}

Let $G= (V,E,\si)$ be a weighted graph with edge weights $\si \in \R^E_{>0}$. Let $\Ga = \Ga_G$ be the family of spanning trees of $G$ with usage vectors given by the indicator functions. Let $\widetilde{\Ga}$ be a Fulkerson dual family of $\Ga$. Let $\rho^*$ and $\eta^*$ be the unique extremal densities for $\Mod_{2,\si}(\Ga)$ and $\Mod_{2,\si^{-1}}(\widetilde{\Ga})$ respectively. 

Note that if $\si \in \Z_{>0}^E$, then we can transform G into an unweighted multigraph by treating the weights $\si(e)$ as edge multiplicities. Then, by continuity and $1$-homogeneity of modulus, it is then straightforward to generalize our results to weighted multigraphs with weights $\si \in \R^E_{>0}$.

We introduce the {\it minimum expected weighted overlap} problem. Let $\cP(\Ga_G)$ be the set of all probability mass functions (or pmf) on $\Ga_G$. Given a pmf $\mu \in \cP(\Ga_G)$, let $\underline{\ga}$ and $\underline{\ga'}$ be two independent random spanning trees, identically distributed by the law $\mu$. We measure the weighted overlap between $\underline{\ga}$ and $\underline{\ga'}$,
\begin{equation}
	\si^{-1}(\underline{\ga} \cap \underline{\ga'}) := \sum\limits_{e \in \underline{\ga} \cap \underline{\ga'}} \si^{-1}(e),
\end{equation}
which is a random variable whose expectation is denoted by  $\bE_{\mu} \left[ \si^{-1}(\underline{\ga} \cap \underline{\ga'}) \right]$. Then, the {\it minimum expected weighted overlap} ($\MEO_{\si^{-1}}$) problem is the following problem: 

\begin{equation} \label{meoweighted}
	\begin{array}{ll}
		\text{minimize}    &\bE_{\mu} \left[ \si^{-1}(\underline{\ga} \cap \underline{\ga'}) \right] \\
		\text{subject to } & \mu \in \cP(\Ga_G). 	 
	\end{array}
\end{equation}
Next, we give a theorem characterize the relation between spanning tree modulus and the ($\MEO_{\si^{-1}}$) problem.
\begin{theorem} Let $G= (V,E,\si)$ be a weighted connected graph with edge weights $\si \in \R^E_{>0}$. Let $\Ga = \Ga_G$ be the spanning tree family, and let $\widetilde{\Ga}$ be a Fulkerson dual family. Then, $\rho \in  \R^E_{\geq 0}$, $\eta \in  \R^E_{\geq 0}$ and $\mu \in \bP(\Ga)$ are optimal respectively for $\Mod_{2,\si}(\Ga)$, $\Mod_{2,\si^{-1}}(\widetilde{\Ga})$ and $\MEO_{\si^{-1}}(\Ga)$ if and only if the following conditions are satisfied.
	
	\begin{align*}
		{(i)} &\qquad \rho \in \Adm(\Ga), \qquad \eta = \cN^T\mu,\\
		{(ii)} &\qquad \eta(e) = \frac{\si(e)\rho(e)}{\Mod_{2,\si}(\Ga)} \qquad \forall e \in E,\\
		{(iii)} &\qquad \mu(\ga)(1-\ell_\rho(\ga)) = 0 \qquad \forall \ga \in \Ga.
	\end{align*}
	In particular, 
	\begin{align}
		\MEO_{\si^{-1}}(\Ga) = \Mod_{2,\si}(\Ga)^{-1} = \Mod_{2,\si^{-1}}(\widetilde{\Ga}).
	\end{align}
\end{theorem}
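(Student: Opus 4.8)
The plan is to mirror the proof of Theorem~\ref{meomod} from \cite{pietrominimal}, carrying the weights $\si$ through; the one new ingredient is the accounting of the \emph{weighted} overlap. The starting point is the identity, valid for every $\mu\in\cP(\Ga)$: writing $\eta:=\cN^T\mu$, so that $\eta(e)=\bP_\mu(e\in\underline\ga)$, independence and equal distribution of $\underline\ga,\underline{\ga'}$ give
\[
\bE_\mu\bigl[\si^{-1}(\underline\ga\cap\underline{\ga'})\bigr]=\sum_{e\in E}\si^{-1}(e)\,\bP_\mu(e\in\underline\ga)^2=\cE_{2,\si^{-1}}(\cN^T\mu).
\]
As $\mu$ ranges over $\cP(\Ga)$ the vector $\cN^T\mu$ ranges over $\co(\Ga)$, hence $\MEO_{\si^{-1}}(\Ga)=\min_{\eta\in\co(\Ga)}\cE_{2,\si^{-1}}(\eta)$. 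Since $\si^{-1}>0$ and all the vectors involved are nonnegative, adding an element of $\R^E_{\geq0}$ to $\eta$ never decreases $\cE_{2,\si^{-1}}(\eta)$, so this minimum equals $\min_{\eta\in\Dom(\Ga)}\cE_{2,\si^{-1}}(\eta)$; and by Proposition~\ref{theo:smallestF}, $\Dom(\Ga)=\Adm(\widetilde\Ga)$, so $\MEO_{\si^{-1}}(\Ga)=\Mod_{2,\si^{-1}}(\widetilde\Ga)$. Feeding in the $p=2$ instance of Fulkerson duality for modulus \cite[Theorem~3.7]{pietroblocking} recalled in the preliminaries (namely $\Mod_{2,\si}(\Ga)\Mod_{2,\si^{-1}}(\widehat\Ga)=1$ and $\eta^*(e)=\si(e)\rho^*(e)/\Mod_{2,\si}(\Ga)$, and observing $\Mod_{2,\si^{-1}}(\widetilde\Ga)=\Mod_{2,\si^{-1}}(\widehat\Ga)$ with the same unique extremal $\eta^*$) yields the displayed identity $\MEO_{\si^{-1}}(\Ga)=\Mod_{2,\si}(\Ga)^{-1}=\Mod_{2,\si^{-1}}(\widetilde\Ga)$ together with~(ii) for the extremal pair $(\rho^*,\eta^*)$.

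For the characterization I would argue both directions by the standard complementary--slackness computation. Suppose first that $\rho^*,\eta^*,\mu^*$ are optimal for the three problems. Then $\rho^*\in\Adm(\Ga)$ and (ii) hold by the previous paragraph. Since $\cN^T\mu^*\in\co(\Ga)\subset\Dom(\Ga)$ attains the value $\MEO_{\si^{-1}}(\Ga)=\min_{\eta\in\Dom(\Ga)}\cE_{2,\si^{-1}}(\eta)$ and $\cE_{2,\si^{-1}}$ is strictly convex on $\R^E_{\geq0}$, the minimizer over $\Dom(\Ga)$ is unique, forcing $\cN^T\mu^*=\eta^*$, which is~(i). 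For~(iii): $\rho^*\in\Adm(\Ga)$ gives $1-\ell_{\rho^*}(\ga)\leq0$ for all $\ga$, so each summand $\mu^*(\ga)(1-\ell_{\rho^*}(\ga))$ is $\leq0$; their sum is $1-\sum_{e}\rho^*(e)\eta^*(e)=1-\cE_{2,\si}(\rho^*)/\Mod_{2,\si}(\Ga)=0$ by~(ii), so every summand vanishes. Conversely, if (i)--(iii) hold then $\rho^*$, $\eta^*=\cN^T\mu^*$, and $\mu^*$ are feasible for the respective problems, and the two identities $\cE_{2,\si}(\rho^*)=\Mod_{2,\si}(\Ga)\sum_\ga\mu^*(\ga)\ell_{\rho^*}(\ga)=\Mod_{2,\si}(\Ga)$ and $\cE_{2,\si^{-1}}(\eta^*)=\sum_e\rho^*(e)\eta^*(e)/\Mod_{2,\si}(\Ga)=\Mod_{2,\si}(\Ga)^{-1}$ --- obtained from (ii) and (iii) exactly as above --- show each attains its optimal value, hence is optimal.

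The step needing the most care, though it is not deep, is deducing $\cN^T\mu^*=\eta^*$ from optimality of $\mu^*$: this rests on first having identified $\MEO_{\si^{-1}}(\Ga)$ with the strictly convex minimization over $\Dom(\Ga)$ and on uniqueness of that minimizer. An alternative, sketched in the paragraph preceding the theorem, is to replace each edge $e$ by $\si(e)$ parallel copies to get an unweighted multigraph $G'$, verify $\Mod_{2,\si}(\Ga_G)=\Mod_2(\Ga_{G'})$ and $\MEO_{\si^{-1}}(\Ga_G)=\MEO(\Ga_{G'})$ via the obvious symmetrization of extremal densities and pmfs, invoke Theorem~\ref{meomod} on $G'$, and pass to arbitrary $\si\in\R^E_{>0}$ by continuity and $1$-homogeneity of modulus; I would nonetheless favor the direct argument above, which needs nothing beyond the preliminaries.
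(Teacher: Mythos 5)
Your argument is correct, and it takes a genuinely different route from the paper. The paper states this theorem without a written proof: its implicit justification is the remark immediately preceding it, namely reduce to the unweighted case by replacing each edge $e$ with $\si(e)$ parallel copies when $\si\in\Z^E_{>0}$, invoke Theorem~\ref{meomod}, and then pass to general $\si\in\R^E_{>0}$ by continuity and $1$-homogeneity of modulus. You instead redo the unweighted proof directly with weights carried through: the identity $\bE_\mu\bigl[\si^{-1}(\underline\ga\cap\underline{\ga'})\bigr]=\cE_{2,\si^{-1}}(\cN^T\mu)$, the observation that minimizing this over $\co(\Ga)$ equals minimizing over $\Dom(\Ga)=\Adm(\widetilde\Ga)$ (monotonicity of the energy in the nonnegative orthant), the $p=q=2$ case of Fulkerson duality for modulus with dual weights $\widetilde\si=\si^{-1}$, strict convexity of $\cE_{2,\si^{-1}}$ to force $\cN^T\mu^*=\eta^*$, and the complementary-slackness computation for (iii) and the converse. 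All of these steps check out (note only that the equality $\Dom(\Ga)=\Adm(\widetilde\Ga)$ is established inside the proof of Proposition~\ref{theo:smallestF} via Fulkerson blocker duality rather than in its statement, a citation nicety). The trade-off: your direct proof is self-contained, covers all real weights at once, and avoids the details the reduction quietly requires --- the symmetrization argument showing $\MEO(\Ga_{G'})=\MEO_{\si^{-1}}(\Ga_G)$ for the multigraph $G'$ with parallel copies, and continuity of the $\MEO$ value in $\si$ --- whereas the paper's route is shorter if one is willing to take the unweighted Theorem~\ref{meomod} and those approximation facts as black boxes. You correctly flag this alternative and the places where it needs care, so nothing is missing.
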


\begin{definition}\label{def:homogeneous-weighted}
	Let $G= (V,E,\si)$ be a weighted graph with edge weights $\si \in \R^E_{>0}$. Let $\Ga = \Ga_G$ be the family of spanning trees of $G$ with usage vectors given by the indicator functions. Let $\widetilde{\Ga}$ be a Fulkerson dual family of $\Ga$. Let $\rho^*$ and $\eta^*$ be the unique optimal densities for $\Mod_{2,\si}(\Ga)$ and $\Mod_{2,\si^{-1}}(\widetilde{\Ga})$ respectively. Then, the graph $G$ is said to be {\it homogeneous} if $\si^{-1}\eta^*$ is constant, or equivalently, $\rho^*$ is constant.
\end{definition}


\begin{theorem}\label{whom}
	Let $G= (V,E,\si)$ be a weighted connected graph with edge weights $\si \in \R^E_{>0}$. Let $\Ga = \Ga_G$ be the family of spanning trees of $G$. Let $\widetilde{\Ga}$ be a Fulkerson dual family of $\Ga$.  Define the density $n_{\si}$:
	
	\begin{equation}\label{nhom}
		n_{\si}(e) :=\frac{\si(e)}{\si(E)}(|V|-1) \qquad \forall e \in E.
	\end{equation} 
	Then, $G$ is homogeneous if and only if  $\eta_{\si} \in \Adm(\widetilde{\Ga})$.
\end{theorem}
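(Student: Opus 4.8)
\emph{Overview.} The plan is to reduce both directions to one observation: the density $n_{\si}$ of (\ref{nhom}) — which is what the statement denotes $\eta_{\si}$ — always meets a \emph{universal} lower bound for the energy $\cE_{2,\si^{-1}}$ over $\Adm(\widetilde{\Ga})$, so that ``$n_{\si}\in\Adm(\widetilde{\Ga})$'' is equivalent to ``$n_{\si}$ is the (unique) optimal density $\eta^*$ for $\Mod_{2,\si^{-1}}(\widetilde{\Ga})$''; and this, via the weighted Fulkerson duality relations recalled in the theorem immediately preceding Definition \ref{def:homogeneous-weighted}, is in turn equivalent to $\rho^*$ being constant, i.e. to $G$ being homogeneous.

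\emph{Step 1: a universal lower bound.} First I would compute, directly from (\ref{nhom}), that $\cE_{2,\si^{-1}}(n_{\si})=\sum_{e\in E}\si(e)^{-1}n_{\si}(e)^2=(|V|-1)^2/\si(E)$. Next, since $\widetilde{\Ga}$ is a Fulkerson dual family of $\Ga$, we have $\Adm(\widetilde{\Ga})=\BL(\Adm(\Ga))=\Dom(\Ga)=\co(\Ga)+\R^E_{\ge 0}$, so any $\eta\in\Adm(\widetilde{\Ga})$ can be written $\eta=\sum_i\mu_i\ones_{\ga_i}+z$ with $\ga_i\in\Ga$, $\mu_i\ge 0$, $\sum_i\mu_i=1$, $z\in\R^E_{\ge 0}$; since every spanning tree has $|V|-1$ edges this gives $\sum_{e\in E}\eta(e)\ge |V|-1$. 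Then Cauchy--Schwarz with the weights $\si(e)^{1/2}$ and $\eta(e)/\si(e)^{1/2}$ gives $\big(\sum_{e}\eta(e)\big)^2\le \si(E)\,\cE_{2,\si^{-1}}(\eta)$, hence $\cE_{2,\si^{-1}}(\eta)\ge (|V|-1)^2/\si(E)$ for every $\eta\in\Adm(\widetilde{\Ga})$. Combining, $\Mod_{2,\si^{-1}}(\widetilde{\Ga})\ge (|V|-1)^2/\si(E)=\cE_{2,\si^{-1}}(n_{\si})$.

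\emph{Step 2: the chain of equivalences.} I would then argue: (a) if $n_{\si}\in\Adm(\widetilde{\Ga})$ then its energy equals the lower bound of Step 1, so $n_{\si}$ is optimal for $\Mod_{2,\si^{-1}}(\widetilde{\Ga})$, whence $\eta^*=n_{\si}$ by uniqueness of the optimal density; conversely $\eta^*$ is admissible by definition, so $\eta^*=n_{\si}$ returns $n_{\si}\in\Adm(\widetilde{\Ga})$. (b) If $\eta^*=n_{\si}$, then the relation $\rho^*(e)=\Mod_{2,\si}(\Ga)\,\eta^*(e)/\si(e)$ (condition (ii) of that theorem) yields $\rho^*(e)=\Mod_{2,\si}(\Ga)(|V|-1)/\si(E)$, independent of $e$, so $G$ is homogeneous by Definition \ref{def:homogeneous-weighted}. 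Conversely, if $G$ is homogeneous then $\rho^*\equiv c$ for some constant $c$, so $\eta^*(e)=c\,\si(e)/\Mod_{2,\si}(\Ga)$ is proportional to $\si$; since $\eta^*=\cN^T\mu^*$ for an optimal $\mu^*$ (condition (i)), $\sum_{e}\eta^*(e)=\sum_{\ga}\mu^*(\ga)|\ga|=|V|-1$, which pins down the proportionality constant and forces $\eta^*=n_{\si}$. Chaining (a) and (b), $n_{\si}\in\Adm(\widetilde{\Ga})\iff G$ is homogeneous.

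\emph{Main obstacle.} The calculations are routine; the one point needing care is Step 1, namely identifying the correct universal lower bound $(|V|-1)^2/\si(E)$ for $\Mod_{2,\si^{-1}}(\widetilde{\Ga})$ and checking that $n_{\si}$ \emph{saturates} it, since it is this sharpness (rather than mere feasibility of $n_{\si}$) that upgrades ``$n_{\si}$ admissible'' to ``$n_{\si}$ optimal''. One should also take care to invoke the weighted forms of $\Adm(\widetilde{\Ga})=\Dom(\Ga)$, of $\eta^*=\si\,\rho^*/\Mod_{2,\si}(\Ga)$ and $\eta^*=\cN^T\mu^*$, and of the existence and uniqueness of $\rho^*$ and $\eta^*$, exactly as stated, as these are what make the reduction go through.
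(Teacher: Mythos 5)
Your proposal is correct, but it follows a genuinely different route from the paper. The paper states Theorem \ref{whom} without a written proof: the intended argument is the reduction announced at the start of Section \ref{sec:weighted2.2} (for $\si\in\Z^E_{>0}$ replace each edge by $\si(e)$ parallel copies, then use continuity and $1$-homogeneity of modulus), which transports the unweighted characterization ``$G$ homogeneous iff $\eta_{hom}\equiv\frac{|V|-1}{|E|}\in\Adm(\widehat{\Ga})$'' from \cite[Corollary 4.4]{pietrofairest} to the weighted setting. You instead argue directly in the weighted setting: you show that every $\eta\in\Adm(\widetilde\Ga)=\Dom(\Ga)$ has total mass at least $|V|-1$, deduce via Cauchy--Schwarz the universal bound $\cE_{2,\si^{-1}}(\eta)\ge(|V|-1)^2/\si(E)$, observe that $n_\si$ saturates it, and conclude that ``$n_\si$ admissible'' is equivalent to ``$n_\si=\eta^*$'', which by the stated weighted duality relations ($\eta^*=\si\rho^*/\Mod_{2,\si}(\Ga)$, $\eta^*=\cN^T\mu^*$, existence and uniqueness) is equivalent to homogeneity. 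All steps check out: the decomposition $\eta=\sum_i\mu_i\ones_{\ga_i}+z$ is legitimate since $\Adm(\widetilde\Ga)=\BL(\Adm(\Ga))=\Dom(\Ga)$ for any Fulkerson dual family, and the relations you invoke are exactly those recorded in the paper (the weighted $\MEO_{\si^{-1}}$ theorem and the $p=q=2$ duality identities, which apply to $\widetilde\Ga$ because its admissible set coincides with that of $\widehat\Ga$). What your approach buys is a self-contained proof within the weighted framework that avoids the multiedge/continuity reduction and the external citation; what the paper's route buys is brevity. It is also worth noting that your total-mass computation for elements of $\Dom(\Ga)$ is essentially the same bookkeeping the paper performs later in Lemma \ref{theo:chom}, so your argument fits naturally alongside it; a small optional simplification is that once $\eta^*=n_\si$ you can conclude homogeneity immediately from ``$\si^{-1}\eta^*$ constant'' in Definition \ref{def:homogeneous-weighted}, without passing through $\rho^*$.
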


 We want to give a new characterization of a homogeneous graph. We start with the following lemma.

\begin{lemma}\label{theo:chom}
	Let $G= (V,E,\si)$ be a weighted graph with edge weights $\si \in \R^E_{>0}$. Let $\Ga = \Ga_G$ be the family of spanning trees of $G$. Let $\widetilde{\Ga}$ be a Fulkerson dual family of $\Ga$.  Let $n_{\si}$ be defined as in (\ref{nhom}). Then,  $G$ is homogeneous if and only if  $\eta_{\si} \in \co(\Ga).$
\end{lemma}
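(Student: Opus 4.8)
The plan is to deduce the lemma from Theorem \ref{whom} together with the elementary observation that the coordinate sum of the density $n_\si$ in (\ref{nhom}) equals $|V|-1$, the number of edges of any spanning tree. Write $\eta_\si := n_\si$. I will use repeatedly that, by Proposition \ref{theo:smallestF} applied to the Fulkerson dual family $\widetilde{\Ga}$, one has $\Adm(\widetilde{\Ga}) = \Dom(\Ga) = \co(\Ga) + \R^E_{\geq 0}$, where $\Ga$ is identified with the finite set of indicator vectors $\{\ones_\ga : \ga \in \Ga_G\}$.

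The ``if'' direction is immediate: since $\co(\Ga) \subseteq \co(\Ga) + \R^E_{\geq 0} = \Adm(\widetilde{\Ga})$, the assumption $\eta_\si \in \co(\Ga)$ gives $\eta_\si \in \Adm(\widetilde{\Ga})$, and Theorem \ref{whom} then shows that $G$ is homogeneous.

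For the converse, assume $G$ is homogeneous. By Theorem \ref{whom}, $\eta_\si \in \Adm(\widetilde{\Ga}) = \Dom(\Ga)$, so there are $\ga_i \in \Ga_G$ and $\mu_i \geq 0$ with $\sum_{i \in I} \mu_i = 1$ and $z \in \R^E_{\geq 0}$ such that $\eta_\si = \sum_{i \in I} \mu_i \ones_{\ga_i} + z$. Summing both sides over $e \in E$ and using $|\ga_i| = |V|-1$ for every spanning tree $\ga_i$, together with
\[
\sum_{e \in E} \eta_\si(e) = \sum_{e \in E} \frac{\si(e)}{\si(E)}(|V|-1) = |V|-1,
\]
I obtain $\sum_{e \in E} z(e) = (|V|-1) - \bigl(\sum_{i \in I}\mu_i\bigr)(|V|-1) = 0$. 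Since $z \geq 0$ coordinatewise, this forces $z = 0$, hence $\eta_\si = \sum_{i \in I} \mu_i \ones_{\ga_i} \in \co(\Ga)$, as desired.

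The only step that needs a word of care is the identity $\Adm(\widetilde{\Ga}) = \Dom(\Ga)$, which is pure Fulkerson-duality bookkeeping from Section \ref{sec:pairs}; the genuinely operative point is the normalization $\sum_{e} \eta_\si(e) = |V|-1$, which is exactly what annihilates the slack term $z$. I do not expect any further obstacle.
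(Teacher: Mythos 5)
Your proof is correct and follows essentially the same route as the paper: reduce via Theorem \ref{whom} to the membership $\eta_\si \in \Adm(\widetilde{\Ga}) = \Dom(\Ga) = \co(\Ga)+\R^E_{\geq 0}$, and then use the normalization $\eta_\si(E)=|V|-1$ together with $|\ga|=|V|-1$ for every spanning tree to force the slack $z$ to vanish. The only cosmetic remark is that the identity $\Adm(\widetilde{\Ga})=\Dom(\Ga)$ comes from Fulkerson blocker duality (as recorded in the displayed identities inside the proof of Proposition \ref{theo:smallestF}), not from the statement of that proposition itself.
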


\begin{proof}
	We recall that \[\Adm(\widetilde{\Ga}) = \Dom(\Ga)= \co(\Ga)+ \R^E_{\geq 0 }.\]  
	By Theorem \ref{whom}, $G$ is homogeneous if and only if  $\eta_{\si} \in \Adm(\widetilde{\Ga}).$ Assume that $\eta_{\si} \in \Adm(\widetilde{\Ga}).$ Then, $\eta_{\si} = x+z$ where $x \in \co(\Ga)$ and $z\in \R^E_{\geq 0 }$. Since $x \in \co(\Ga)$, then $x = \sum\limits_{\ga \in \Ga} u(\ga)\cN(\ga,\cdot)$ for some $\mu \in \R^{\Ga}$ satifying $\mu \geq 0$ and $\sum\limits_{\ga\in \Ga} \mu(\ga) = 1$. Then,
	
	\begin{align*}
		x(E)=\sum\limits_{e \in E}\sum\limits_{\ga \in \Ga} \mu(\ga)\cN(\ga,e)
		=\sum\limits_{\ga \in \Ga}\sum\limits_{e \in E} \mu(\ga)\cN(\ga,e)= |V|-1.
	\end{align*}
	Notice that \[\eta_{\si}(E) =  \sum\limits_{e\in E}\eta_{\si}(e)= |V|-1.\]
	Therefore, \[z(E) = \eta_{\si}(E) - x(E) =  0.\]
	Hence, $z = 0$ and $\eta_{\si} = x \in \co(\Ga).$
	
	Assume that $\eta_{\si} \in \co(\Ga)$, then  
	$\eta_{\si} \in \Adm(\widetilde{\Ga})$ because $\co(\Ga) \subset \Adm(\widetilde{\Ga})$.
	In conclusion, $G$ is homogeneous if and only if  $\eta_{\si} \in \co(\Ga).$
\end{proof} 

\begin{theorem}\label{homo3}
	Let $G= (V,E,\si)$ be a weighted graph with edge weights $\si \in \R^E_{>0}$. Let $\Ga = \Ga_G$ be the family of spanning trees of $G$. Let $\coni(\Ga)$ be the conical hull of $\Ga$. Then, $G$ is homogeneous if and only if $\si \in \coni (\Ga)$. 
\end{theorem}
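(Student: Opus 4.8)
The plan is to reduce everything to Lemma \ref{theo:chom}, which already characterizes homogeneity via membership of the density $n_{\si}$ of (\ref{nhom}) in the convex hull $\co(\Ga)$. The only remaining work is to translate between $\co(\Ga)$ and the conical hull $\coni(\Ga)$, exploiting that $n_{\si}$ is just a positive rescaling of $\si$, namely $n_{\si} = \frac{|V|-1}{\si(E)}\,\si$. This rescaling is legitimate because $\si \in \R^E_{>0}$ forces $\si(E)>0$ (we may assume $G$ has at least one edge, the case $|V|=1$ being vacuous).

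The key observation is that every usage vector $\ones_{\ga}$ of a spanning tree satisfies $\sum_{e \in E}\ones_{\ga}(e) = |\ga| = |V|-1$, so for any nonnegative coefficients $(c_{\ga})_{\ga \in \Ga}$,
\[
\sum_{e \in E}\Big(\sum_{\ga \in \Ga} c_{\ga}\ones_{\ga}\Big)(e) = (|V|-1)\sum_{\ga \in \Ga} c_{\ga}.
\]
This identity is the bridge between the two hulls: a conical combination of spanning tree vectors whose coordinate sum equals $|V|-1$ is automatically a convex combination, and conversely scaling a convex combination of them by a positive constant yields a conical combination.

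For the forward direction, suppose $G$ is homogeneous. By Lemma \ref{theo:chom}, $n_{\si} \in \co(\Ga) \subset \coni(\Ga)$, and since $\coni(\Ga)$ is closed under multiplication by the positive scalar $\frac{\si(E)}{|V|-1}$, we get $\si = \frac{\si(E)}{|V|-1}\,n_{\si} \in \coni(\Ga)$. For the converse, suppose $\si = \sum_{\ga \in \Ga} c_{\ga}\ones_{\ga}$ with all $c_{\ga}\ge 0$. Summing over $E$ and using the displayed identity gives $\si(E) = (|V|-1)\sum_{\ga} c_{\ga}$, so $\sum_{\ga} c_{\ga} = \frac{\si(E)}{|V|-1}$. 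Dividing through, $n_{\si} = \frac{|V|-1}{\si(E)}\si = \sum_{\ga} \frac{(|V|-1)c_{\ga}}{\si(E)}\ones_{\ga}$, and the coefficients $\frac{(|V|-1)c_{\ga}}{\si(E)}$ are nonnegative with sum $1$; hence $n_{\si} \in \co(\Ga)$ and $G$ is homogeneous by Lemma \ref{theo:chom}.

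There is no substantive obstacle here beyond the normalization bookkeeping; the one point to watch is that $\coni(\Ga)$ must be taken with respect to the usage vectors (indicator functions) of $\Ga$, and that $\si(E)\ne 0$ so that passing between $\si$ and $n_{\si}$ is valid.
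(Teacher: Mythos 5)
Your proposal is correct and follows essentially the same route as the paper: reduce to Lemma \ref{theo:chom} and pass between $\co(\Ga)$ and $\coni(\Ga)$ by rescaling, using that every spanning tree usage vector has coordinate sum $|V|-1$. The only difference is that you make the normalization bookkeeping (and the positivity of $\si(E)$, which justifies the division) slightly more explicit than the paper does.
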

\begin{proof}
	By Lemma \ref{theo:chom}, it is enough to show that $\eta_{\si} \in \co(\Ga)$ if and only if $\si \in \coni (\Ga)$.
	
	Assume that $\eta_{\si} \in \co(\Ga)$. Then $\eta_{\si}$ is a convex combination of elements in $\Ga$. That implies \[\si = \frac{\si(E)}{|V|-1}\eta_{\si}\] is a conical combination of of elements in $\Ga$. In other words, $\si \in \coni (\Ga)$. 
	
	Assume that $\si \in \coni (\Ga)$. Then, $\si = \sum\limits_{\ga \in \Ga} u(\ga)\cN(\ga,\cdot)$ with $u(\ga) \geq 0 \quad \forall \ga \in \Ga$. Then, \[\si(E) = \sum\limits_{e \in E}\sum\limits_{\ga \in \Ga} \mu(\ga)\cN(\ga,e)
	=\sum\limits_{\ga \in \Ga}\sum\limits_{e \in E} \mu(\ga)\cN(\ga,e) =u(\Ga)(|V|-1)\]  where $u(\Ga) := \sum\limits_{\ga \in \Ga}u(\ga)$. Therefore, 
	\[\eta_{\si}(e) =\frac{\si(e)}{\si(E)}(|V|-1) =  \frac{\si(e)}{u(\Ga)} = \frac{1}{u(\Ga)}\sum\limits_{\ga \in \Ga} u(\ga)\cN(\ga,e).\] In other words, $\eta_{\si} \in \co(\Ga)$.
	In conclusion, $G$ is homogeneous if and only if $\si \in \coni (\Ga)$. 
\end{proof}

Finally, we generalize our results to weighted multigraphs.
\begin{theorem} Let $G= (V,E,\si)$ be a weighted connected graph with edge weights $\si \in \R^E_{>0}$. Let $S_{\si}(G)$ be the strength of $G$. Let $\Ga = \Ga_G$ be the family of spanning trees of $G$. Let $\widetilde{\Ga}$ be a Fulkerson dual family of $\Ga$.
	Let $\eta^*$  be the optimal density for $\Mod_{2,\si^{-1}}(\widetilde{\Ga})$. Denote
	\begin{equation}
		E_{ max} :=\left\{ e\in E: \si^{-1}(e)\eta^*(e)=\max\limits_{e \in E}\si^{-1}(e)\eta^*(e) =: (\si^{-1}\eta^*)_{  max} \right\}.
	\end{equation}
	Then, there exists a critical partition $P_{ max}$ for the strength problem such that $E_{P_{ max}} = E_{ max}$ and
	\begin{equation}
		(\si^{-1}\eta^*)_{  max} = \frac{1}{S_{\si}(G)}.
	\end{equation}
	
\end{theorem}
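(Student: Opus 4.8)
The plan is to transport the unweighted result, Theorem~\ref{etamax}, to arbitrary positive weights in stages: integer weights via an edge–multiplicity construction, rational weights via $1$–homogeneity, and real weights by continuity together with an induction that handles the combinatorial half of the conclusion. For $\si\in\Z_{>0}^E$ I would pass to the unweighted multigraph $G'$ obtained from $G$ by replacing each edge $e$ by $\si(e)$ parallel copies. A density on $G$ lifts to a density on $G'$ that is constant on parallel classes, and the unique optimal density for $\Mod_2(\Ga_{G'})$ is itself constant on parallel classes (by uniqueness together with the relabeling symmetry of the problem); one then checks $\Mod_{2,\si}(\Ga_G)=\Mod_2(\Ga_{G'})$ with $\rho^*_{G'}$ the lift of $\rho^*_G$. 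Feeding the weighted relation $\eta^*(e)=\si(e)\rho^*(e)/\Mod_{2,\si}(\Ga_G)$ and the unweighted relation $\eta^*_{G'}=\rho^*_{G'}/\Mod_2(\Ga_{G'})$ of Theorem~\ref{meomod} into this gives $\eta^*_{G'}(e')=\si^{-1}(e)\eta^*(e)$ whenever $e'$ lies over $e$, so $(\eta^*_{G'})_{\max}=(\si^{-1}\eta^*)_{\max}$ and $E_{\max}(G')$ is the union of the parallel classes over $E_{\max}$; moreover the feasible partitions of $G'$ and $G$ coincide with $|E_{P,G'}|=\si(E_P)$, whence $S(G')=S_\si(G)$ and critical partitions correspond. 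Applying Theorem~\ref{etamax} to $G'$ and pushing $P'_{\max}$ down to $V$ then yields a critical (and Beurling) partition $P_{\max}$ of $G$ with $E_{P_{\max}}=E_{\max}$ and $(\si^{-1}\eta^*)_{\max}=1/S(G')=1/S_\si(G)$. For $\si\in\Q_{>0}^E$ I would write $\delta:=h\si\in\Z_{>0}^E$ for a suitable $h\in\Z_{>0}$; since $\cE_{2,\delta}=h\,\cE_{2,\si}$ we get $\rho^*_\delta=\rho^*_\si$, $\eta^*_\delta=\eta^*_\si$, and $\delta^{-1}\eta^*_\delta=h^{-1}\si^{-1}\eta^*_\si$, so $E_{\max}$ is unchanged, and using $S_\delta(G)=h\,S_\si(G)$ (Remark~\ref{rsi}) and scale–invariance of critical partitions, the integer case for $\delta$ delivers the statement for $\si$.

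For real $\si$ I would take $\si_n\in\Q_{>0}^E$ with $\si_n\to\si$. Continuity of modulus \cite[Lemma~6.4]{modulus} together with uniqueness of extremal densities gives $\rho^*_{\si_n}\to\rho^*_\si$ and $\eta^*_{\si_n}\to\eta^*_\si$, while $S_{\si_n}(G)\to S_\si(G)$ by Lemma~\ref{continuity}; passing to the limit in $(\si_n^{-1}\eta^*_{\si_n})_{\max}=1/S_{\si_n}(G)$ yields the scalar identity $(\si^{-1}\eta^*)_{\max}=1/S_\si(G)$, i.e.\ $\rho^*_{\max}/\Mod_{2,\si}(\Ga)=1/S_\si(G)$, immediately. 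For the partition, since there are only finitely many partitions of $V$, along a subsequence the critical Beurling partitions produced above for the $\si_n$ all coincide with one partition $P$, and letting $n\to\infty$ shows $P$ is critical, is Beurling, and has $E_P\subseteq E_{\max}$. Now let $P_{\max}$ be the partition of $V$ into the connected components of $G\setminus E_{\max}$, so that $E_{P_{\max}}=E_{\max}$ automatically and $P_{\max}$ refines $P$ (Lemma~\ref{finer}); it remains to check that $P_{\max}$ is critical. I would split $G$ along the Beurling partition $P$ via the weighted serial rule (Theorem~\ref{serialmod}): on each block $G_i=G(V_i)$ the extremal density $\rho^*_{G_i}$ is a positive multiple of $\rho^*|_{E_i}$, so for each block that $P_{\max}$ actually refines, the partition $P_{\max}$ induces on $V_i$ is exactly the ``$P_{\max}$'' of $G_i$. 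Since $|E_i|<|E|$, the inductive hypothesis applied to $G_i$ (homogeneous graphs being the base case, immediate from Theorem~\ref{homo}) makes that partition critical, and a one–line computation with the scalar identities for $G_i$ and $G$ gives $S_\si(G_i)=S_\si(G)$. Assembling the numerators and denominators blockwise (a mediant argument) then yields $\si(E_{P_{\max}})/(k_{P_{\max}}-1)=S_\si(G)$, so $P_{\max}$ is a critical partition with $E_{P_{\max}}=E_{\max}$, as required.

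I expect the real–weight stage to be the main obstacle. Continuity disposes of the numerical identity painlessly, but $E_{\max}$ is a discrete invariant that need not be stable under perturbing $\si$, so the equality $E_{P_{\max}}=E_{\max}$ cannot simply be passed to the limit. The way around it — note that the limiting partition $P$ is still critical and Beurling with $E_P\subseteq E_{\max}$, enlarge it to $P_{\max}$, and verify that $P_{\max}$ is critical by a blockwise reduction and an induction on the number of edges — is where the genuine work lies. The weighted analogues of Theorems~\ref{fair}, \ref{serialmod} and \ref{homo} that this argument invokes follow from their unweighted versions exactly as in the integer and rational stages, so they add bookkeeping but no new difficulty.
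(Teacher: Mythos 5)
Your overall route — integer weights via edge multiplicities, rational weights via $1$-homogeneity, real weights via continuity — is exactly the route the paper itself gestures at (it offers no detailed proof, only the remark at the start of Section \ref{sec:weighted2.2}), and your integer and rational stages are correct and carefully done. You are also right that the genuine difficulty sits at the real-weight stage: the scalar identity $(\si^{-1}\eta^*)_{\max}=1/S_\si(G)$ passes to the limit, but the set identity $E_{P_{\max}}=E_{\max}$ does not, since along a rational approximation one only gets $E_{\max}(\si_n)\subseteq E_{\max}(\si)$ and hence a critical Beurling partition $P$ with $E_P\subseteq E_{\max}$, possibly strictly.

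The gap is in how you close that difficulty. The assertion that taking $P_{\max}$ to be the partition into connected components of $G\setminus E_{\max}$ gives $E_{P_{\max}}=E_{\max}$ ``automatically'' is false as stated: the cut set of that partition consists only of the edges of $E_{\max}$ whose endpoints lie in \emph{different} components of $G\setminus E_{\max}$, and a priori an edge of $E_{\max}$ could have both endpoints in the same component, so $E_{P_{\max}}\subsetneq E_{\max}$ is not excluded. This set equality is precisely the combinatorial content of the theorem, yet your induction is aimed only at criticality and takes the equality for granted; as written the proof of the first conclusion is missing. It is repairable inside your own framework: apply the inductive hypothesis to each block $G_i$ of the limit partition $P$ on which the global maximum is attained (using that $\eta^*|_{E_i}$ is extremal for $G_i$), note that any feasible partition of $G_i$ whose cut set is exactly $E_{\max}\cap E_i$ must coincide with the components partition of $G_i\setminus(E_{\max}\cap E_i)$, and conclude that every edge of $E_{\max}\cap E_i$ crosses distinct components of $G\setminus E_{\max}$; together with $E_P\subseteq E_{P_{\max}}$ this yields $E_{P_{\max}}=E_{\max}$, after which your mediant computation gives criticality. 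A second, smaller soft spot: the weighted analogues of Theorems \ref{fair} and \ref{serialmod} do not follow ``exactly as in the integer and rational stages,'' because the Beurling condition $\eta^*(E_P)=k_P-1$ is an equality and is not stable under perturbing $\si$; these analogues should instead be proved directly (the unweighted arguments carry over verbatim with weights), and likewise continuity of $\rho^*_{\si}$, $\eta^*_\si$ in $\si$ needs the standard strict-convexity argument, not merely continuity of the modulus value. With those repairs your argument is in fact more complete than the paper's own two-sentence justification, which glosses over exactly the point you identified.
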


\begin{theorem}\label{emin}
	Let $G= (V,E,\si)$ be a weighted connected graph with edge weights $\si \in \R^E_{>0}$.  Let $D_{\si}(G)$ be the maximum denseness of $G$. Let $\Ga = \Ga_G$ be the family of spanning trees of $G$. Let $\widetilde{\Ga}$ be a Fulkerson dual family of $\Ga$.
	Let $\eta^*$  be the optimal density for $\Mod_{2,\si^{-1}}(\widetilde{\Ga})$.  Denote 
	\begin{equation}
		E_{ min} :=\left\{ e\in E: \si^{-1}(e)\eta^*(e)=\min\limits_{e \in E}\si^{-1}(e)\eta^*(e) =: (\si^{-1}\eta^*)_{  min} \right\}.
	\end{equation}
	Let $H_{ min}$ be the edge-induced subgraph induced by $E_{ min}$. Then, every connected component $H$ of $H_{ min}$ is a maximum denseness subgraph of $G$  and
	
	\begin{equation}
		(\si^{-1}\eta^*)_{  min} = \frac{1}{D_{\si}(G)}.
	\end{equation}

\end{theorem}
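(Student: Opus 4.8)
The plan is to reduce to the unweighted case, where the statement is exactly Theorem~\ref{etamin} together with equation~(\ref{etamin2}) and the results of \cite[Theorem 5.2, Corollary 5.10]{pietrofairest} recalled in Section~\ref{sec4}. As indicated at the start of Section~\ref{sec:weighted2.2}, it suffices to treat $\si\in\Z^E_{>0}$, then pass to $\si\in\Q^E_{>0}$ by $1$-homogeneity and to $\si\in\R^E_{>0}$ by continuity. First I would record a reformulation: by condition~(ii) of the weighted analogue of Theorem~\ref{meomod} stated above, $\si^{-1}(e)\eta^*(e)=\rho^*(e)/\Mod_{2,\si}(\Ga)$ for all $e\in E$, where $\rho^*$ is the unique optimal density for $\Mod_{2,\si}(\Ga)$; hence $E_{min}=\{e\in E:\rho^*(e)=\min_{e'\in E}\rho^*(e')\}$ and $(\si^{-1}\eta^*)_{min}=(\min_{e'}\rho^*(e'))/\Mod_{2,\si}(\Ga)$.

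For $\si\in\Z^E_{>0}$ let $G'=(V,E')$ be the unweighted multigraph obtained from $G$ by replacing each edge $e$ with $\si(e)$ parallel copies $e_1,\dots,e_{\si(e)}$. I would set up the dictionary between $G$ and $G'$: (a) $\Mod_{2,\si}(\Ga_G)=\Mod_2(\Ga_{G'})$, and the unique optimal density for $\Mod_2(\Ga_{G'})$ is constant on the copies of each edge --- by uniqueness together with the invariance of $G'$ under permuting those copies --- so $\rho^*_{G'}(e_i)=\rho^*_G(e)$; (b) applying condition~(ii) in both graphs, $\eta^*_{G'}(e_i)=\rho^*_{G'}(e_i)/\Mod_2(\Ga_{G'})=\rho^*_G(e)/\Mod_{2,\si}(\Ga_G)=\si^{-1}(e)\eta^*_G(e)$; (c) the vertex-induced connected subgraphs of $G'$ are the ``multigraph versions'' of those of $G$, with $\theta(K')=\si(E(K))/(|V(K)|-1)=\theta_\si(K)$ for the corresponding pair, so $D(G')=D_\si(G)$. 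By (b), the least-used edges of $G'$ are the union of all copies of the edges of $E_{min}$, so the connected components of the edge-induced subgraph $H'_{min}\subset G'$ are the multigraph versions of the connected components of $H_{min}\subset G$. Applying \cite[Theorem 5.2, Corollary 5.10]{pietrofairest} to $G'$: each component $H'$ of $H'_{min}$ is a vertex-induced maximum-denseness subgraph of $G'$, and by~(\ref{etamin2}), $\min_{e'}\eta^*_{G'}(e')=1/\theta(H')=1/D(G')$. Translating back, each component $H$ of $H_{min}$ has the same vertex set $W$ as some such $H'$; since $H'$ is vertex-induced in $G'$, every $G$-edge inside $W$ lies in $E_{min}$, so $H=G(W)$ is vertex-induced, $\theta_\si(H)=\theta(H')=D(G')=D_\si(G)$, and by (b), $(\si^{-1}\eta^*)_{min}=1/D(G')=1/D_\si(G)$. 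This settles the integer case.

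For $\si\in\Q^E_{>0}$, pick $r\in\Z_{>0}$ with $r\si\in\Z^E_{>0}$; since $\cE_{2,r\si}=r\cE_{2,\si}$ and $\widetilde{(r\si)}=r^{-1}\si^{-1}$ (as $p=2$), the optimizers $\rho^*,\eta^*$ are unchanged while $\Mod_{2,\si}(\Ga)$, $\theta_\si$ and $D_\si$ scale by the compatible powers of $r$; hence $E_{min}$, the components of $H_{min}$, and the family of maximum-denseness subgraphs are the same for $\si$ and $r\si$, and the two identities for $\si$ follow from those for $r\si$. For $\si\in\R^E_{>0}$, approximate by rationals $\si_n\to\si$: continuity of $\Mod_{2,\si}(\Ga)$ and of its optimizers in $\si$ gives $(\si_n^{-1}\eta^*_n)_{min}\to(\si^{-1}\eta^*)_{min}$, while $D_{\si_n}(G)\to D_\si(G)$ because $D_\si(G)$ is a maximum of finitely many functions linear in $\si$; passing to the limit in $(\si_n^{-1}\eta^*_n)_{min}=1/D_{\si_n}(G)$ gives the value identity. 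The delicate point is the structural claim at irrational $\si$: $E_{min}$ is only upper semicontinuous in $\si$ (it can only shrink under small perturbations, by continuity of $\rho^*$ and strictness of $\rho^*(e)>\min_{e'}\rho^*(e')$ off $E_{min}$), so one cannot transport the component structure from arbitrary $\si_n$. I would resolve this by choosing the rational approximants inside the same ``combinatorial cell'' as $\si$ --- subject to the same (rational-linear) equalities and inequalities among the ratios $\si(E(K))/(|V(K)|-1)$, $K\in\cH$ --- so that the family of maximum-denseness subgraphs, hence $E_{min}$ and the components of $H_{min}$, agree with those of $\si$; the result then transfers. I expect making this last reduction precise --- that $E_{min}$ and the maximum-denseness family are locally constant on such cells --- to be the main obstacle.
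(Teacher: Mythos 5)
Your route is exactly the one the paper intends: Theorem \ref{emin} is stated in Section \ref{sec:weighted2.2} without its own proof, the authors relying on the blanket remark that integer weights can be treated as edge multiplicities and that continuity and $1$-homogeneity then give the general case. Your integer-weight dictionary is correct (the symmetry-plus-uniqueness argument that $\rho^*_{G'}$ is constant on parallel copies, the identity $\eta^*_{G'}(e_i)=\si^{-1}(e)\eta^*_G(e)$ via condition (ii), and $D(G')=D_\si(G)$ all check out), the passage to rational $\si$ by homogeneity is fine, and the value identity $(\si^{-1}\eta^*)_{min}=1/D_\si(G)$ does pass to the limit by continuity of the optimizers and of $D_\si(G)$. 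Up to that point you have done the work more carefully than the paper does.

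The genuine gap is the one you flag yourself: the structural claim (each component of $H_{min}$ is a maximum denseness subgraph) at irrational $\si$, and your proposed fix does not close it as stated. The semicontinuity you invoke gives $E_{min}(\si')\subseteq E_{min}(\si)$ for $\si'$ near $\si$, which is the wrong direction: knowing that the components of the possibly smaller set $H_{min}(\si')$ are maximum-denseness subgraphs says nothing about the components of $H_{min}(\si)$, which may be strictly larger. What you would need is that $E_{min}$ is constant on your top-layer cell, i.e.\ that $E_{min}$ coincides with the union of the edge sets of the maximum-denseness subgraphs; but that is essentially the theorem being proved together with its converse, so the argument as sketched is circular. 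Two honest ways to close it: (1) rerun the unweighted arguments of \cite{pietrofairest} with weights --- note that once the value identity is known at real $\si$, admissibility of $\eta^*$ for the partition family plus $\eta^*(E)=|V|-1$ already forces $\si^{-1}\eta^*\equiv 1/D_\si(G)$ on every maximum-denseness subgraph, so only the converse containment (every $E_{min}$-component is maximum-dense) requires the weighted analogue of \cite[Theorem 5.2]{pietrofairest}; or (2) refine your cells so that they control the entire deflation hierarchy, not just the top layer: all layer values are ratios of $\si$-linear quantities by integers, so the refined cells are still polyhedra with rational data, rational points are dense in them, and on each such cell $\eta^*$ is given by a fixed layer formula which extends from the rational points by continuity, making $E_{min}$ genuinely locally constant. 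Either way the step is not the one-liner the paper's ``straightforward'' suggests, and your write-up correctly isolates where the extra work lives, but it does not yet supply it.
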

\begin{theorem}
	Let $G= (V,E,\si)$ be a weighted connected graph with edge weights $\si \in \R^E_{>0}$. Let $S_{\si}(G)$ be the strength of $G$. Let $D_{\si}(G)$ be the maximum denseness of $G$. Let $\Ga = \Ga_G$ be the family of spanning trees of $G$. Let $\widetilde{\Ga}$ be a Fulkerson dual family of $\Ga$.
	Let $\eta^*$  be the optimal density for $\Mod_{2,\si^{-1}}(\widetilde{\Ga})$. Then,
	\begin{equation}\label{maxmin-weighted}
		\frac{1}{(\si^{-1}\eta^*)_{ max}} = S_{\si}(G)\leq \theta_{\si}(G)\leq D_{\si}(G) = \frac{1}{(\si^{-1}\eta^*)_{ min}}.
	\end{equation}
	Moreover, the graph $G$ is homogeneous if and only if $S_{\si}(G) = D_{\si}(G)$.
\end{theorem}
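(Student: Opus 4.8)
The plan is to observe that the two equalities in (\ref{maxmin-weighted}) have already been proved earlier in Section \ref{sec:weighted2.2}: the equality $1/(\si^{-1}\eta^*)_{ max} = S_{\si}(G)$ is exactly the conclusion of the theorem stated immediately before Theorem \ref{emin}, and $D_{\si}(G) = 1/(\si^{-1}\eta^*)_{ min}$ is exactly the conclusion of Theorem \ref{emin}. Hence the only genuinely new content to supply is the middle chain $S_{\si}(G)\le\theta_{\si}(G)\le D_{\si}(G)$ together with the homogeneity equivalence, and each of these is a short comparison between the relevant optimization problems.

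First I would establish $S_{\si}(G)\le\theta_{\si}(G)$. The partition $P_0$ of $V$ into singletons is feasible in the sense of Section \ref{sec-feasible-spanning}: each singleton induces the (trivially) connected one-vertex subgraph and $k_{P_0} = |V| \ge 2$. Its cut set is all of $E$, so the ratio $\si(E_{P_0})/(k_{P_0}-1) = \si(E)/(|V|-1) = \theta_{\si}(G)$ is one of the quantities over which $S_{\si}(G)$ is taken to be the minimum in (\ref{strength}); therefore $S_{\si}(G)\le\theta_{\si}(G)$. For $\theta_{\si}(G)\le D_{\si}(G)$: assuming $G$ has at least one edge (otherwise the statement is vacuous, since $\theta_{\si}$, $S_{\si}$, $D_{\si}$ all involve division by $|V|-1$), the graph $G$ is itself a vertex-induced connected subgraph of $G$ containing an edge, so $G \in \cH$; hence $\theta_{\si}(G) \le \max_{H' \in \cH}\theta_{\si}(H') = D_{\si}(G)$.

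Finally, for the homogeneity equivalence: by Definition \ref{def:homogeneous-weighted}, $G$ is homogeneous if and only if $\si^{-1}\eta^*$ is constant on the finite edge set $E$, which is the case if and only if $(\si^{-1}\eta^*)_{ max} = (\si^{-1}\eta^*)_{ min}$; by the two equalities recorded above this is in turn equivalent to $1/S_{\si}(G) = 1/D_{\si}(G)$, i.e. to $S_{\si}(G) = D_{\si}(G)$ (note that the $\eta^*$ here is the same density appearing in Definition \ref{def:homogeneous-weighted}).

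I do not expect a real obstacle: the substance of the result rests on the two preceding theorems, and what remains is bookkeeping plus the checks that the singleton partition and $G$ itself are admissible competitors in the problems defining $S_{\si}$ and $D_{\si}$. The one place that merits a sentence of care is verifying that $\max$ and $\min$ of the nonnegative function $\si^{-1}\eta^*$ on $E$ coincide exactly when that function is constant. If one instead wanted a proof not citing those two theorems, the hard part would be re-deriving the two equalities, which could be done through the integer-weight reduction noted at the start of Section \ref{sec:weighted2.2}: replace $\si \in \Z^E_{>0}$ by edge multiplicities, apply the unweighted chain (\ref{maxmin}), and pass back to $\si \in \R^E_{>0}$ by continuity and $1$-homogeneity of modulus.
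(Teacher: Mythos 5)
Your proposal is correct and follows essentially the route the paper intends: the paper leaves this weighted theorem's proof implicit as a generalization of the unweighted chain (\ref{maxmin}) and Theorem \ref{homo}, and your argument is exactly that transcription — the two equalities come from Theorem \ref{emin} and the weighted $E_{max}$ theorem preceding it, the middle inequalities from the singleton partition and from $G\in\cH$, and the homogeneity equivalence from Definition \ref{def:homogeneous-weighted} via $(\si^{-1}\eta^*)_{max}=(\si^{-1}\eta^*)_{min}$. Your side remarks (the $|V|\ge 2$/at-least-one-edge caveat, the independence of $\eta^*$ from the choice of Fulkerson dual family, and the alternative integer-multiplicity reduction) are all consistent with the paper's setup.
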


\bibliographystyle{acm}
\bibliography{paper}
\end{document}